\newcommand{\circleddagger}{\raisebox{0.2ex}{\tiny\textcircled{$\dagger$}}}
\numberwithin{equation}{section}
\definecolor{astral}{RGB}{46,116,181}
\DeclareMathAlphabet{\mathpzc}{OT1}{pzc}{m}{it}
\DeclareFontFamily{OT1}{pzc}{}
\DeclareFontShape{OT1}{pzc}{m}{it}{<-> s * [0.900] pzcmi7t}{}
\DeclareMathAlphabet{\mathpzc}{OT1}{pzc}{m}{it}
\newlength{\dhatheight}
\DeclareMathAlphabet\mathbfcal{OMS}{cmsy}{b}{n}
\definecolor{darkslategray}{rgb}{0.18, 0.31, 0.31}
\definecolor{warmblack}{rgb}{0.0, 0.26, 0.26}
\def\BState{\State\hskip-\ALG@thistlm}
\newtheorem{theorem}{Theorem}[section]
\newtheorem{lemma}[theorem]{Lemma}
\newtheorem{corollary}[theorem]{Corollary}
\theoremstyle{definition}
\newtheorem{definition}{Definition}[section]
\newtheorem{example}{Example}[section]
\journal{}
\begin{document}

\begin{frontmatter}

\title{ \textcolor{warmblack}{\bf On Characterizations of W-weighted DMP and MPD Inverses}}

\author[label1]{Rajesh Senapati}
\address[label1]{Department of Mathematics, School of Advanced Sciences,  Vellore Institute of Technology, Chennai, India}
\ead{rajesh.senapati2024@vitstudent.ac.in}

\author[label1]{Ashish Kumar Nandi \corref{cor1}}
\ead{ashishkumar.nandi@vit.ac.in}

\begin{abstract}
Recently, the weak Drazin inverse and its characterization have been crucial studies for matrices of index $k$. In this article, we have revisited W-weighted DMP and MPD inverses and constructed a general class of unique solutions to certain matrix equations. Moreover, we have generalized the W-weighted Drazin inverse of Meng [The DMP inverse for rectangular matrices, Filomat, 31(19), 6015–6019 (2017)] using the minimal rank W-weighted weak Drazin inverse. In addition to that, we have derived several equivalent properties of W-weighted DMP and MPD inverses for minimal rank W-weighted weak Drazin inverse of rectangular matrices. Furthermore, some projection-based results are discussed for the characterization of minimal rank W-weighted Drazin inverse, along with some new expressions that are derived for MPD and DMP inverses. Thereby, we have elaborated certain expressions of the perturbation formula for W-weighted weak MPD and DMP inverses.  As an application, we establish the reverse and forward order laws using the W-weighted weak Drazin inverse and the minimal rank W-weighted weak Drazin inverse, and apply these results to solve certain matrix equation.

\begin{keyword}
Weighted weak Drazin inverse, Minimal rank W-weighted weak Drazin inverse, Weak MPD and DMP inverse.\\
{\it Mathematics Subject Classification:} 15A09, 15A24.
\end{keyword}
\end{abstract}

\end{frontmatter}

\section{Introduction}
Consider $\mathbb{C}^{m\times{n}}$ as the collection of all complex matrices of size ${m \times{n}}$. For any $A\in\mathbb{C}^{m \times{n}}$, $A^*$, $\mathcal{R}(A)$, $\mathcal{N}(A)$, $\mathbf{O}$ and $P_A$ denote the conjugate transpose, range, null space, zero matrix, and orthogonal projection onto the range space of $A$, respectively. For $B\in\mathbb{C}^{m\times{n}}$, the Moore-Penrose inverse $H\in\mathbb{C}^{n\times{m}}$ is the unique solution to the system of equations $BHB=B$, $HBH=H$, $BH=(BH)^*$ and $HB=(HB)^*$ and is denoted as $B^{\dag}$ \cite{Philos, Penrose17}. If the matrix $H$ satisfies only one condition $BHB=B$, then it is called the inner inverse of $B$ and is represented as $B\{1\}$. Similarly, for the outer inverse of $B$, the matrix $H$ satisfies the condition $HBH=H$ and is indicated as $B\{2\}$. Recall that if $\mathcal{R}(H)=E$ and $\mathcal{N}(H)=F$, then $B^{(2)}_{E,F}$ is the unique outer inverse of $B$ \cite{Ben2}. For a square matrix $B\in\mathbb{C}^{n\times{n}}$, the smallest nonnegative integer $\tilde{k}$ with $rank(B^{\tilde{k}})=rank(B^{\tilde{k}+1})$ is called the index of the matrix $B$ and represented as $ind(B)$. Suppose $B\in\mathbb{C}^{n\times{n}}$ with $\tilde{k}=ind(B)$, the unique matrix $Y$ satisfies the conditions $YBY=Y$, $BY=YB$, $B^{\tilde{k}+1}Y=B^{\tilde{k}}$ is known as the Drazin inverse of $B$ and denoted by $B^D$ \cite{Drazin13}. For every square matrix $B\in\mathbb{C}^{n\times{n}}$, $X$ satisfying the condition $B^{\tilde{k}+1}X=B^{\tilde{k}}$ is referred to as the weak Drazin inverse of $B$ and if it satisfies the condition $rank(X)=rank(B^{\tilde{k}})$, then it is called the minimal rank weak Drazin inverse of $B$ \cite{Campbell20}, respectively. Similarly, if $X$ satisfying the condition $XB^{\tilde{k}+1}=B^{\tilde{k}}$ with $rank(X)=rank(B^{\tilde{k}})$, then it is called the minimal rank right weak Drazin inverse of $B$ \cite{Campbell20}. Let $B\in\mathbb{C}^{m \times{n}}$ and $\tilde{W}\in\mathbb{C}^{n \times{m}}\setminus\{0\}$ with $\tilde{k}=\max\{ind(B\tilde{W}),ind(\tilde{W}B)\}$, then the W-weighted Drazin inverse $B^{D,W}$ of $B$ is the uniquely determined solution to $B^{D,W}=B^{D,W}\tilde{W}B\tilde{W}B^{D,W}$, $B\tilde{W}B^{D,W}=B^{D,W}\tilde{W}B$ and $(B\tilde{W})^{\tilde{k}+1}B^{D,W}\tilde{W}=(B\tilde{W})^{\tilde{k}}$. It is well established that $B^{D,W}=[(B\tilde{W})^{D}]^2B=B[(\tilde{W}B)^D]^2$ \cite{Cline3}. For $B\in\mathbb{C}^{m \times{n}}$, $\tilde{W}\in\mathbb{C}^{n \times{m}}$ with $\tilde{k}=ind(B\tilde{W})$, if $X\in\mathbb{C}^{m \times{n}}$ satisfying the condition $X\tilde{W}(B\tilde{W})^{\tilde{k}+1}=(B\tilde{W})^{\tilde{k}}$ is known as the W-weighted weak Drazin inverse of $B$ \cite{Stank15}. If $\tilde{k}=ind(\tilde{W}B)$ with $Z$ satisfying the equation $\tilde{W}(B\tilde{W})^{\tilde{k}+1}Z=(\tilde{W}B)^{\tilde{k}}$ is called the W-weighted right weak Drazin inverse of $B$ \cite{Stank15}. The W-weighted weak Drazin inverse and W-weighted right weak Drazin inverse with the rank condition $rank(X)=rank((B\tilde{W})^{\tilde{k}})$ and $rank(Z)=rank((\tilde{W}B)^{\tilde{k}})$ is called the minimal rank W-weighted weak Drazin and minimal rank W-weighted right weak Drazin inverse of $B$ \cite{mos1}, respectively. Suppose $B\in\mathbb{C}^{m \times {n}}$, $\tilde{W}\in\mathbb{C}^{n \times {m}}$ with $\tilde{k}=\max\{ind(B\tilde{W}),ind(\tilde{W}B)\}$, then $Y=\tilde{W}B^{D,W}\tilde{W}BB^{\dag}=B^{D,\dag,W}\in\mathbb{C}^{n \times {m}}$ is the unique solution to the following systems
\begin{eqnarray}
    YBY=Y,~~YB=\tilde{W}B^{D,W}\tilde{W}B~~\mbox{and}~~(\tilde{W}B)^{\tilde{k}+1}Y=(\tilde{W}B)^{\tilde{k}}B^{\dag}.\label{First}
\end{eqnarray} is known as the W-weighted DMP inverse of $B$ \cite{Meng11}. Moreover, the dual W-weighted DMP inverse which satisfies the equations, 
\begin{eqnarray}
YBY=Y,~~BY=B\tilde{W}B^{D,W}\tilde{W}~~\mbox{and}~~(B\tilde{W})^{\tilde{k}+1}Y=(B\tilde{W})^{\tilde{k}}B^{\dag}\label{Second}
\end{eqnarray} 
is referred to as the W-weighted MPD inverse of $B$ and uniquely denoted by $Y=B^{\dag}B\tilde{W}B^{D,W}\tilde{W}=B^{\dag,D,W}$  \cite{Kyrchei12}. If $\tilde{W}=I$, then the W-weighted DMP and MPD coincide with DMP and MPD inverse of $B$ \cite{Malik22}. \par

Suppose $B\in\mathbb{C}^{m \times{n}}$, $\tilde{W}\in\mathbb{C}^{n \times{m}}$ with $\tilde{k}=ind(B\tilde{W})$, the W-weighted core-EP inverse $B^{\circleddagger,W}$ of $B$ is  the uniquely determined solution of the equations $\tilde{W}B\tilde{W}B^{\circleddagger,W}=(\tilde{W}B)^{\tilde{k}}[(\tilde{W}B)^{\tilde{k}}]^{\dag}$ and $\mathcal{R}((B\tilde{W})^{\tilde{k}})=\mathcal{R}(B^{\circleddagger,W})$. It is represented as $B^{\circleddagger,W}=B[(\tilde{W}B)^{\circleddagger}]^2=B^{\circleddagger,W}\tilde{W}B\tilde{W}A^{\circleddagger,W}=B\tilde{W}B^{\circleddagger,W}\tilde{W}B^{\circleddagger,W}$ \cite{Ferreyra6,mos7}. When $\tilde{W}=I$, then this inverse equals to $B^{\circleddagger}$  \cite{Core}. If $m\in\mathbb{N}$, $B\in\mathbb{C}^{m \times {n}}$, $\tilde{W}\in\mathbb{C}^{n \times {m}}$ with $\tilde{k}=\max\{ind(B\tilde{W}),ind(\tilde{W}B)\}$, the W-weighted m-WGI is defined as $B^{\textcircled{w}_m,W} = (B^{\circleddagger,W}\tilde{W})^{m+1}(B\tilde{W})^{m-1}B$  \cite{mos8}. This serves as the unique solution to the matrix equations $B\tilde{W}H_c=(B^{\circleddagger,W})^m(B\tilde{W})^{m-1}B$ and $B\tilde{W}X_c\tilde{W}H_c=X_c$. When $\tilde{W}=I$, this inverse is identical to the m-WGI, given by $B^{\textcircled{w}_m}=(B^{\circleddagger})^{m+1}B^m$ \cite{New, Ring}. If $m=1$, then $B^{\textcircled{w}_m}$ coincides with the weak group inverse $B^{\textcircled{w}}=(B^{\circleddagger})^2B$ \cite{2018}. Let $m\in\mathbb{N}$, $B\in\mathbb{C}^{m \times {n}}$, and $\tilde{W}\in\mathbb{C}^{n \times {m}}$ with $\tilde{k}=\max\{ind(B\tilde{W}),ind(\tilde{W}B)\}$, the W-weighted m-weak core inverse of $B$, denoted by $H_c=B^{\textcircled{\#}_m,W}=B^{\textcircled{w}_m,W}P_{(WB)^m}$ \cite{Ferreyra9} and it is uniquely determined by the systems $B\tilde{W}H_c=B(\tilde{W}B)^{\textcircled{\#}_m}$ and $B\tilde{W}H_c\tilde{W}H_c=H_c$. By using these inverses, several inverses have been established. For $B\in\mathbb{C}^{m \times{n}}$ and $\tilde{W}\in\mathbb{C}^{n \times{m}}\setminus\{0\}$, with $\tilde{k}=\max\{ind(B\tilde{W}),ind(\tilde{W}B)\}$, the W-MPCEP is defined as $B^{\dag,\circleddagger,W} = B^{\dag}A\tilde{W}B^{\circleddagger,W}\tilde{W}$ \cite{mos5}, the W-CEPMP $B^{\circleddagger,\dag,W}$ is given by $B^{\circleddagger,\dag,W} = \tilde{W}B^{\circleddagger,W}\tilde{W}BB^{\dag}$ \cite{mos5}, the W-m-WGMP inverse can be expressed as $B^{\textcircled{w}_m,W,\dag}=\tilde{W}B^{\textcircled{w}_m,W}\tilde{W}BB^{\dag}$ \cite{Gao14} . \par

All these generalized inverses play a crucial role in several applications. For example, the Moore-Penrose inverse solves the minimum-norm least-squares solution of linear systems \cite{Ben2}, the Drazin inverse has applications in Markov chains, singular differential equations, and so on \cite{Ben2, Campbell}. Moreover, the weak Drazin inverse and minimal rank weak Drazin inverse \cite{Campbell20} can be used due to their easier computation compared to the Drazin inverse. The minimal rank weak Drazin inverse \cite{Wu} is a generalization of inverses such as the Drazin inverse \cite{Drazin13}, the core-EP inverse \cite{Core}, the DMP inverse \cite{Malik22}, the weak group inverse \cite{2018}, the weak core inverse \cite{2021}. Its extension to rectangular matrices, known as the minimal rank W-weighted weak Drazin inverses \cite{mos1}, also generalizes several well-known inverses, including the W-weighted Drazin inverse  \cite{Cline3}, the W-weighted core-EP inverse \cite{Ferreyra6, mos7}, and the W-weighted-m-WGI \cite{mos8}. Moreover, if $X$ denotes the minimal rank weak Drazin inverse of $B$, then the matrix $Y=B^{\dag}XB$ uniquely satisfies \[
YBY=Y, BY=XB~\mbox{and}~YB^{\tilde{k}}=B^{\dag}B^{\tilde{k}},
\] is known as weak MPD inverse and suppose $Z$ is the minimal rank right weak Drazin inverse of $B$, then the matrix $Y=BZB^{\dag}$ is the unique matrix satisfying \[~YBY=Y,~YB=BZ~\mbox{and}~B^{\tilde{k}}Y=B^{\tilde{k}}B^{\dag}\] is called as weak DMP inverse \cite{mos16}. For $X=A^D$ and $Z=A^D$, weak MPD and DMP inverse reduces to MPD and DMP inverses \cite{Malik22}.\par

Inspired by the concepts of weak MPD and DMP inverses \cite{mos16}, along with minimal rank W-weighted weak Drazin inverses \cite{mos1}, we extend the concept of weak MPD and DMP inverses from square matrices to rectangular matrices. Since the W-weighted Drazin inverse \cite{Cline3} is a particular case of the minimal rank W-weighted weak Drazin inverse \cite{mos1}, it can used in the place of the W-weighted Drazin inverse \cite{Cline3}. Our primary objective is to investigate the solvability of the matrix equations (\ref{First}) and (\ref{Second}) using minimal rank W-weighted weak Drazin inverse and minimal rank W-weighted right weak Drazin inverse. In this framework, we present the weaker version of W-weighted MPD and DMP inverses. \par
Furthermore, we present a new type of characterization for the W-weighted weak MPD and DMP inverses and establish characterizations of the W-MPCEP and W-m-MPWG inverses, together with their dual forms, in the same framework. By expressing the W-weighted weak MPD inverse in terms of well-known generalized inverses, we derive several new results. We then carry out a perturbation analysis of the W-weighted weak MPD and DMP inverses. In addition, we examine the reverse and forward order laws for the minimal rank W-weighted weak Drazin inverse. Moreover, we extend this to study the triple reverse and forward order laws for the minimal rank W-weighted weak Drazin inverse. Finally, we employ the minimal rank W-weighted weak Drazin inverse to obtain the corresponding reverse order law for the W-weighted weak MPD inverse. As an implementations, a new class of matrix equations are solved by applying reverse and forward order laws.\par
This article is organized as follows. Section \ref{sub2} focuses on the characterizations of minimal rank W-weighted weak Drazin inverse and minimal rank W-weighted right weak Drazin inverse. Section \ref{sub3} is divided into three subsections. The first subsection discusses the solvability of the new type of extended systems and the developed characterizations, which present the expressions of W-weighted weak MPD and DMP inverses. In the second subsection, we investigate perturbation analysis of W-weighted weak MPD and DMP inverses. In the last subsection, we examine the reverse order law of W-weighted weak MPD inverse as well as the reverse, forward order laws of minimal rank W-weighted weak Drazin inverses.
\section{Some Preliminaries Inputs} 
We have now presented few initial results based on the minimal rank W-weighted weak Drazin inverse and the right weak Drazin inverse, whose equivalences are frequently used in the main results.  

\label{notp1.1} \label{sub2}

\begin{theorem}[Theorem $2.1$, \cite{mos1}]\label{main1.1} \normalfont
      Suppose $B\in\mathbb{C}^{m\times{n}}$, $\tilde{W}\in C^{n\times{m}}\setminus\{0\}$ with $\tilde{k} = ind(B\tilde{W})$ and $X\in\mathbb{C}^{m\times{n}}$, then the followings are equivalent:\\
      (i)  $X$ is the minimal rank ${W}$-weighted weak Drazin inverse of $A$, i.e., $X\tilde{W}(B\tilde{W})^{\tilde{k}+1}=(B\tilde{W})^{\tilde{k}}$ and $rank(X) = rank((B\tilde{W})^{\tilde{k}})$ ;\\
      (ii) $X\tilde{W}(B\tilde{W})^{\tilde{k}+1} = (B\tilde{W})^{\tilde{k}}$ and $\mathcal{R}(X) = \mathcal{R}((B\tilde{W})^{\tilde{k}})$ ;\\
      (iii) $X\tilde{W}B\tilde{W}X =X$ and $\mathcal{R}(X)=\mathcal{R}((B\tilde{W})^{\tilde{k}})$, i.e., $X\in{B}{\{2\}}_{{\mathcal{R}((B\tilde{W})^{\tilde{k}})},_*}$ ;\\
      (iv) $X = (B\tilde{W})^{\tilde{k}}[(B\tilde{W})^{\tilde{k}}]^{\dag}X$ and $X\tilde{W}(B\tilde{W})^{\tilde{k}+1} =(B\tilde{W})^{\tilde{k}}$ ;\\
      (v) $X\tilde{W}B\tilde{W}X =X$, $X=(B\tilde{W})^{\tilde{k}}[(B\tilde{W})^{\tilde{k}}]^{\dag}X$ and $X\tilde{W}(B\tilde{W})^{\tilde{k}+1} =(B\tilde{W})^{\tilde{k}}$ ;\\
      (vi) $B\tilde{W}X\tilde{W}X =X$ and $X\tilde{W}(B\tilde{W})^{{\tilde{k}}+1} =(B\tilde{W})^{\tilde{k}}$ ;\\
      (vii) $X = (B\tilde{W})^D(B\tilde{W})X$ and $X\tilde{W}(B\tilde{W})^{{\tilde{k}}+1} =(B\tilde{W})^{\tilde{k}}$.
 \end{theorem}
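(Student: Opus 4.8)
The plan is to fix the shorthand $M:=B\tilde{W}\in\mathbb{C}^{m\times m}$, so that $\tilde{k}=ind(M)$ and each of the seven conditions becomes a statement about $M$, $\tilde{W}$ and $X$ alone. I would first record the standard facts I intend to use repeatedly. (a) Since $\tilde{k}=ind(M)$, the subspace $V:=\mathcal{R}(M^{\tilde{k}})=\mathcal{R}(M^{\tilde{k}+1})=\mathcal{R}(M^{D})$ is $M$-invariant, $M|_V$ is invertible, and $M^{\tilde{k}+1}M^{D}=M^{\tilde{k}}$. (b) $M^{\tilde{k}}(M^{\tilde{k}})^{\dagger}=P_{(B\tilde{W})^{\tilde{k}}}$ is the orthogonal projector onto $V$, while $(B\tilde{W})^{D}(B\tilde{W})=MM^{D}$ is the oblique projector onto $V$ along $\mathcal{N}(M^{\tilde{k}})$; hence for any $X$ each of the equalities $M^{\tilde{k}}(M^{\tilde{k}})^{\dagger}X=X$ and $MM^{D}X=X$ holds if and only if $\mathcal{R}(X)\subseteq V$. (c) The defining weak-Drazin equation $(\star)$: $X\tilde{W}M^{\tilde{k}+1}=M^{\tilde{k}}$ already forces $\mathcal{R}(M^{\tilde{k}})=\mathcal{R}(X\tilde{W}M^{\tilde{k}+1})\subseteq\mathcal{R}(X)$, so $rank(M^{\tilde{k}})\le rank(X)$. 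These three facts are the whole engine of the proof.

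Second, I would dispatch the range/rank equivalences, which are essentially dimension counting. Given $(\star)$, fact (c) gives $\mathcal{R}(M^{\tilde{k}})\subseteq\mathcal{R}(X)$; if in addition $rank(X)=rank(M^{\tilde{k}})$, the inclusion of equal-dimensional spaces upgrades to $\mathcal{R}(X)=\mathcal{R}(M^{\tilde{k}})$, while the range equality trivially returns the rank equality, so (i)$\Leftrightarrow$(ii). The same mechanism handles (iv) and (vii): by the projector identities of fact (b) each of them is exactly ``$(\star)$ together with $\mathcal{R}(X)\subseteq V$'', and combining this $\subseteq$ with the forced reverse inclusion from (c) yields $\mathcal{R}(X)=V$; thus (ii)$\Leftrightarrow$(iv)$\Leftrightarrow$(vii). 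Condition (v) is merely the conjunction of the first equations of (iii) and (iv) with $(\star)$, so it comes for free once those are seen to be equivalent to (ii).

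Third comes the substantive part: the equivalence of ``$(\star)$ plus range'' with the product (outer-inverse) equations in (iii) and (vi). For the forward directions I would use $\mathcal{R}(X)=V$ to write $X=M^{\tilde{k}}S$; then (iii) is immediate, $X\tilde{W}MX=X\tilde{W}M^{\tilde{k}+1}S\overset{(\star)}{=}M^{\tilde{k}}S=X$, and (vi) follows once I establish the cancellation identity $MX\tilde{W}M^{\tilde{k}}=M^{\tilde{k}}$, obtained by left-multiplying $(\star)$ by $M$ and then right-multiplying by $M^{D}$ using $M^{\tilde{k}+1}M^{D}=M^{\tilde{k}}$, since then $B\tilde{W}X\tilde{W}X=MX\tilde{W}M^{\tilde{k}}S=M^{\tilde{k}}S=X$. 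For the reverse directions I would exploit idempotency: under (iii) the matrix $P:=X\tilde{W}M$ satisfies $P^{2}=(X\tilde{W}MX)\tilde{W}M=P$ and $\mathcal{R}(P)=\mathcal{R}(X)=V$, whence $X\tilde{W}M^{\tilde{k}+1}=PM^{\tilde{k}}=M^{\tilde{k}}$ because $P$ fixes its own range $V$, which contains the columns of $M^{\tilde{k}}$; and under (vi), iterating $X=(B\tilde{W})X\tilde{W}X$ gives $X=M^{\tilde{k}}X(\tilde{W}X)^{\tilde{k}}$, so $\mathcal{R}(X)\subseteq V$, which with the $(\star)$ already present in (vi) forces $\mathcal{R}(X)=V$. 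This closes (ii)$\Leftrightarrow$(iii) and (ii)$\Leftrightarrow$(vi).

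I expect the only delicate point to be these last equivalences, precisely because the product equations $X\tilde{W}B\tilde{W}X=X$ and $B\tilde{W}X\tilde{W}X=X$ do not by themselves pin down the range, so one must inject either the explicit range hypothesis or the equation $(\star)$ at exactly the right place. The one step needing genuine care is the cancellation $AM=CM\Rightarrow A=C$, which is false in general and valid here only after right-multiplication by $M^{D}$ (together with $M^{\tilde{k}+1}M^{D}=M^{\tilde{k}}$). Everything else reduces to the projector bookkeeping of facts (a)--(c), so once those are in hand the seven-way equivalence assembles along the chain (i)$\Leftrightarrow$(ii)$\Leftrightarrow$(iv)$\Leftrightarrow$(v)$\Leftrightarrow$(vii) on one side and (ii)$\Leftrightarrow$(iii), (ii)$\Leftrightarrow$(vi) on the other.
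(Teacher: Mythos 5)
Your proposal is correct, but there is nothing in the paper to compare it against: Theorem~\ref{main1.1} is stated as a preliminary, quoted verbatim as Theorem 2.1 of \cite{mos1}, and the paper gives no proof of it. Judged on its own merits, your argument is sound and self-contained. The three facts you isolate do carry the whole proof: the projector characterizations (for an idempotent $P$ with range $V$, $PX=X$ iff $\mathcal{R}(X)\subseteq V$, applied to $P=(B\tilde{W})^{\tilde{k}}[(B\tilde{W})^{\tilde{k}}]^{\dag}$ and $P=(B\tilde{W})(B\tilde{W})^{D}$) collapse (iv) and (vii) into ``$(\star)$ plus a range inclusion''; the forced inclusion $\mathcal{R}((B\tilde{W})^{\tilde{k}})\subseteq\mathcal{R}(X)$ coming from $(\star)$ upgrades the rank equality in (i) to the range equality in (ii); and the two genuinely nontrivial directions are handled correctly, namely (iii)$\Rightarrow$(ii) via the idempotent $P=X\tilde{W}B\tilde{W}$ satisfying $\mathcal{R}(P)=\mathcal{R}(X)=\mathcal{R}((B\tilde{W})^{\tilde{k}})$ and hence fixing the columns of $(B\tilde{W})^{\tilde{k}}$, and (vi)$\Rightarrow$(ii) via the iteration $X=(B\tilde{W})^{j}X(\tilde{W}X)^{j}$, which pins $\mathcal{R}(X)$ inside $\mathcal{R}((B\tilde{W})^{\tilde{k}})$. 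You also justify the cancellation step $B\tilde{W}X\tilde{W}(B\tilde{W})^{\tilde{k}}=(B\tilde{W})^{\tilde{k}}$ properly, by right-multiplying by $(B\tilde{W})^{D}$ rather than cancelling $(B\tilde{W})^{\tilde{k}+1}$ naively; that is exactly the spot where a careless argument would fail. In short, your write-up supplies a legitimate proof of a result this paper only cites, using the standard projector and range bookkeeping one would expect to find in \cite{mos1} itself.
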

 \begin{theorem}[Theorem $2.8$, \cite{mos1}]\label{main1.2} \normalfont
 Suppose $B\in\mathbb{C}^{m\times{n}}$, $\tilde{W}\in C^{n\times{m}}\setminus\{0\}$ with $\tilde{k} = ind(\tilde{W}B)$ and $Z\in\mathbb{C}^{m\times{n}}$, then the followings are equivalent:\\
     (i) $Z$ is the minimal rank ${W}$-weighted right weak Drazin inverse of $B$, i.e., $W(BW)^{\tilde{k}+1}Z=(\tilde{W}B)^{\tilde{k}}$ and $rank(Z)=rank((\tilde{W}B)^{\tilde{k}})$ ;\\
     (ii) $\tilde{W}(B\tilde{W})^{\tilde{k}+1}Z=(\tilde{W}B)^{\tilde{k}}$ and $\mathcal{N}(Z)=\mathcal{N}((\tilde{W}B)^{\tilde{k}})$ ;\\
     (iii) $Z\tilde{W}B\tilde{W}Z=Z$ and $\mathcal{N}(Z)=\mathcal{N}((WB)^{\tilde{k}})$, i.e., $Z\in{B}{\{2\}}_{*,{\mathcal{N}((\tilde{W}B)^{\tilde{k}})}}$ ;\\
    (iv) $Z=Z[(\tilde{W}B)^{\tilde{k}}]^{\dag}(\tilde{W}B)^{\tilde{k}}$ and $\tilde{W}(B\tilde{W})^{\tilde{k}+1}Z=(\tilde{W}B)^{\tilde{k}}$ ;\\
    (v) $Z\tilde{W}B\tilde{W}Z=Z, Z=Z[(\tilde{W}B)^{\tilde{k}}]^{\dag}(\tilde{W}B)^{\tilde{k}}$ and $\tilde{W}(B\tilde{W})^{\tilde{k}+1}Z=(\tilde{W}B)^{\tilde{k}}$ ;\\
    (vi) $Z\tilde{W}Z\tilde{W}B=Z$ and $\tilde{W}(B\tilde{W})^{\tilde{k}+1}Z=(\tilde{W}B)^{\tilde{k}}$ ;\\
    (vii) $Z=Z(\tilde{W}B)^{D}(\tilde{W}B)$ and $\tilde{W}(B\tilde{W})^{\tilde{k}+1}Z=(\tilde{W}B)^{\tilde{k}}$ .
 \end{theorem}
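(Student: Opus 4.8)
The plan is to obtain Theorem \ref{main1.2} as the conjugate--transpose dual of Theorem \ref{main1.1}, which avoids having to run a fresh seven--condition cycle of implications. First I would introduce the auxiliary pair $\hat{B}=B^{*}$ and $\hat{W}=\tilde{W}^{*}$. Since $(\tilde{W}B)^{*}=B^{*}\tilde{W}^{*}=\hat{B}\hat{W}$, we have $ind(\hat{B}\hat{W})=ind((\tilde{W}B)^{*})=ind(\tilde{W}B)=\tilde{k}$, so the hypothesis of Theorem \ref{main1.1} holds verbatim for the pair $(\hat{B},\hat{W})$, and I may apply that theorem to the matrix $X:=Z^{*}$.

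The core of the argument is a dictionary translating each of the seven conditions of Theorem \ref{main1.2} (for $Z$) into the corresponding condition of Theorem \ref{main1.1} (for $Z^{*}$) under the substitution $(B,\tilde{W})\mapsto(\hat{B},\hat{W})$. The two facts that drive this are $(B\tilde{W})^{*}=\tilde{W}^{*}B^{*}=\hat{W}\hat{B}$ together with the elementary commutation $\tilde{W}(B\tilde{W})^{j}=(\tilde{W}B)^{j}\tilde{W}$ (and its analogue for $(\hat{B},\hat{W})$). Applying the conjugate transpose to the defining equation $\tilde{W}(B\tilde{W})^{\tilde{k}+1}Z=(\tilde{W}B)^{\tilde{k}}$ and using these identities yields exactly $Z^{*}\hat{W}(\hat{B}\hat{W})^{\tilde{k}+1}=(\hat{B}\hat{W})^{\tilde{k}}$, which is the defining equation of Theorem \ref{main1.1}. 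For the side conditions I would invoke the standard facts $rank(Z)=rank(Z^{*})$; the equivalence $\mathcal{N}(Z)=\mathcal{N}(N)\iff\mathcal{R}(Z^{*})=\mathcal{R}(N^{*})$ (from $\mathcal{R}(M^{*})=\mathcal{N}(M)^{\perp}$); and the commutations $(M^{\dag})^{*}=(M^{*})^{\dag}$ and $(M^{D})^{*}=(M^{*})^{D}$. These turn the rank equality, the null--space equality $\mathcal{N}(Z)=\mathcal{N}((\tilde{W}B)^{\tilde{k}})$, and the outer/idempotent equations (iii)--(vii) into their Theorem \ref{main1.1} counterparts for $Z^{*}$.

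Once the dictionary is verified, the proof is immediate: Theorem \ref{main1.1} asserts the equivalence of (i)--(vii) for $(\hat{B},\hat{W})$ applied to $Z^{*}$, and transporting each equivalence back through the dictionary delivers the equivalence of (i)--(vii) of Theorem \ref{main1.2} for $Z$. Alternatively, if a self--contained argument is preferred, I would mirror the proof of Theorem \ref{main1.1} directly, systematically interchanging left and right multiplication and replacing range conditions by null--space conditions.

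I expect the only delicate point to be the bookkeeping in the transposition: confirming that no exponent shifts by one, that the weight $\tilde{W}$ lands on the correct side at each step, and in particular that the transpose of condition (vii), which produces $Z^{*}=(\hat{B}\hat{W})(\hat{B}\hat{W})^{D}Z^{*}$, coincides with the stated form $X=(\hat{B}\hat{W})^{D}(\hat{B}\hat{W})X$; this is reconciled by the commutativity $M^{D}M=MM^{D}$ of a matrix with its Drazin inverse. The identity $\tilde{W}(B\tilde{W})^{j}=(\tilde{W}B)^{j}\tilde{W}$ is what makes the defining equations match, so it, together with the two commutativity facts for $\dag$ and $D$ under conjugate transpose, is the crux on which the whole reduction rests.
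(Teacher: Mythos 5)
Your proposal is correct, but note that the paper itself never proves this statement: Theorem \ref{main1.2} is imported as a preliminary, cited as Theorem 2.8 of \cite{mos1}, exactly as Theorem \ref{main1.1} is cited as Theorem 2.1 of \cite{mos1}. So there is no in-paper proof to compare against; the source reference establishes the right-handed version by a direct argument mirroring the left-handed one, whereas you obtain it as a formal consequence of Theorem \ref{main1.1} via the conjugate-transpose dictionary $\hat{B}=B^{*}$, $\hat{W}=\tilde{W}^{*}$, $X=Z^{*}$. Your reduction is rigorous: the dimension bookkeeping works ($\hat{B}\in\mathbb{C}^{n\times m}$, $\hat{W}\in\mathbb{C}^{m\times n}$, $Z^{*}\in\mathbb{C}^{n\times m}$, so Theorem \ref{main1.1} applies with $m$ and $n$ interchanged); $ind((\tilde{W}B)^{*})=ind(\tilde{W}B)$ since conjugation preserves ranks of powers; the commutation $(\hat{W}\hat{B})^{j}\hat{W}=\hat{W}(\hat{B}\hat{W})^{j}$ turns the transposed defining equation $Z^{*}(\hat{W}\hat{B})^{\tilde{k}+1}\hat{W}=(\hat{B}\hat{W})^{\tilde{k}}$ into exactly $Z^{*}\hat{W}(\hat{B}\hat{W})^{\tilde{k}+1}=(\hat{B}\hat{W})^{\tilde{k}}$; the side conditions translate correctly via $rank(Z)=rank(Z^{*})$, $\mathcal{N}(Z)=\mathcal{N}(N)\iff\mathcal{R}(Z^{*})=\mathcal{R}(N^{*})$, $(M^{\dag})^{*}=(M^{*})^{\dag}$, $(M^{D})^{*}=(M^{*})^{D}$; and the only asymmetry, in condition (vii), is resolved by $MM^{D}=M^{D}M$ as you observe. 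What your route buys is economy and a guarantee of consistency between the two theorems (any future correction to Theorem \ref{main1.1} propagates automatically); what a direct mirrored proof buys is independence from the duality facts for $\dag$ and $D$ under conjugation, though these are standard, so nothing essential is lost in your reduction.
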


\section{Main Results} \label{sub3}
We begin by presenting the definitions of the W-weighted weak MPD and DMP inverses. We then investigate the conditions under which certain systems of matrix equations are solvable. These matrix equations are characterized by the minimal rank W-weighted weak Drazin inverse and the minimal rank W-weighted right weak Drazin inverse. Next, we establish characterizations of the W-weighted weak MPD and DMP inverses and obtain their explicit formulas. In addition, we address the perturbation analysis and the reverse order law associated with the W-weighted weak MPD inverse. Finally, we derive both reverse and forward order laws for minimal rank W-weighted weak Drazin inverses.
\begin{definition}
 Let us consider the matrix $B\in\mathbb{C}^{m\times{n}}$ and  $\tilde{W}\in\mathbb{C}^{n\times{m}}$ such that $\tilde{k}=ind(A\tilde{W})$. Suppose that $Y$ is an arbitrary matrix and have fixed minimal rank W-weighted weak Drazin inverse of $B$, then the W-weighted weak MPD inverse of $B$ is defined by
 \begin{center}
     $B_W^{{\dag},D_w}=B^{\dag}B\tilde{W}Y\tilde{W}.$
 \end{center}
\end{definition}
\begin{definition}
Suppose $B\in\mathbb{C}^{m\times{n}}$. Assume that $\tilde{W}\in\mathbb{C}^{n\times{m}}$ with $\tilde{k}=ind(\tilde{W}B)$. Consider $Z$ is arbitrary with fixed minimal rank W-weighted right weak Drazin inverse of $B$, then the W-weighted weak DMP inverse of $B$ is defined by
\begin{center}
    $B_W^{{D_w,{\dag}}}=\tilde{W}Z\tilde{W}BB^{\dag}.$
\end{center}
\end{definition}
\begin{theorem}\label{Inverse} \normalfont
Consider the matrix $B\in\mathbb{C}^{m\times{n}}$ and  $\tilde{W}\in\mathbb{C}^{n\times{m}}$ such that $\tilde{k}=ind(B\tilde{W})$. Let $X$ denote the minimal rank $W$-weighted weak Drazin inverse of $B$. Then $Y = B^{\dag}B\tilde{W}X\tilde{W} $ is the unique solution to
\begin{center}
    $YBY=Y$, $BY=B\tilde{W}X\tilde{W}$ and $Y(B\tilde{W})^{\tilde{k}+1} =B^{\dag}(B\tilde{W})^{\tilde{k}+1}$.
\end{center}
\begin{proof}
Since $X$ is a minimal rank $W$-weighted weak Drazin inverse of $B$, we have $X=(BW)^{\tilde{k}}[(BW)^{\tilde{k}}]^{\dag}X$ and $(BW)^{\tilde{k}}=XW(BW)^{\tilde{k}+1}$. By applying Theorem \ref{main1.1}, we now proceed to verify that the matrix $Y=B^{\dag}B\tilde{W}X\tilde{W}$ satisfies the following significant equations.
\begin{eqnarray*}
YBY&=&B^{\dag}B\tilde{W}X\tilde{W}BB^{\dag}B\tilde{W}X\tilde{W} \\
&=&B^{\dag}B\tilde{W}X\tilde{W}B\tilde{W}X\tilde{W}\\
&=&B^{\dag}B\tilde{W}X\tilde{W}\\
&=&Y.
\end{eqnarray*}
Further, $BY = BB^{\dag}B\tilde{W}X\tilde{W} = B\tilde{W}X\tilde{W}$ since $BB^{\dag}B = B$. Again, $ Y(B\tilde{W})^{\tilde{k}+1} = B^{\dag}B\tilde{W}X\tilde{W}(B\tilde{W})^{\tilde{k}+1} = B^{\dag}B\tilde{W}(B\tilde{W})^{\tilde{k}} = B^{\dag}(B\tilde{W})^{\tilde{k}+1}$.
Now to establish the uniqueness, we have
\begin{eqnarray*}
     Y&=&YBY\\
     &=&YB\tilde{W}X\tilde{W}\\
     &=&YB\tilde{W}(B\tilde{W})^k[(B\tilde{W})^{\tilde{k}}]^{\dag}X\tilde{W}\\
     &=&Y(B\tilde{W})^{{\tilde{k}}+1}[(B\tilde{W})^{\tilde{k}}]^{\dag}X\tilde{W}\\
     &=&B^{\dag}(B\tilde{W})^{\tilde{k}+1}[(BW)^{\tilde{k}}]^{\dag}X\tilde{W}\\
     &=& B^{\dag}B\tilde{W}(B\tilde{W})^{\tilde{k}}[(B\tilde{W})^{\tilde{k}}]^{\dag}X\tilde{W}\\
     &=& B^{\dag}B\tilde{W}X\tilde{W}\\
     &=&Y.
\end{eqnarray*}
\end{proof}
\end{theorem}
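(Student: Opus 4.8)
The plan is to prove the statement in two parts: first verify that the explicit matrix $Y = B^{\dag}B\tilde{W}X\tilde{W}$ satisfies the three defining equations (existence), and then show that any matrix satisfying them coincides with this $Y$ (uniqueness). Throughout I would rely on the two characterizing properties of the minimal rank $W$-weighted weak Drazin inverse supplied by Theorem \ref{main1.1}: the outer-inverse identity $X\tilde{W}B\tilde{W}X = X$ together with the range normalization $X = (B\tilde{W})^{\tilde{k}}[(B\tilde{W})^{\tilde{k}}]^{\dag}X$, and the defining equation $X\tilde{W}(B\tilde{W})^{\tilde{k}+1} = (B\tilde{W})^{\tilde{k}}$.

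For existence, the second equation $BY = B\tilde{W}X\tilde{W}$ follows immediately from the Moore-Penrose identity $BB^{\dag}B = B$. The idempotency-type equation $YBY = Y$ I would obtain by expanding $YBY$, again absorbing the inner $BB^{\dag}B$ into $B$, and then collapsing the resulting $X\tilde{W}B\tilde{W}X$ to $X$. The third equation $Y(B\tilde{W})^{\tilde{k}+1} = B^{\dag}(B\tilde{W})^{\tilde{k}+1}$ I would get by pushing $(B\tilde{W})^{\tilde{k}+1}$ through the expression and using $X\tilde{W}(B\tilde{W})^{\tilde{k}+1} = (B\tilde{W})^{\tilde{k}}$ to reduce the tail.

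The uniqueness argument is where the real work lies. Starting from an arbitrary solution $Y$, I would write $Y = YBY = Y(B\tilde{W}X\tilde{W})$ using the second equation. The key maneuver is then to substitute the range normalization $X = (B\tilde{W})^{\tilde{k}}[(B\tilde{W})^{\tilde{k}}]^{\dag}X$, which manufactures a factor $(B\tilde{W})^{\tilde{k}+1}$ immediately to the right of $Y$. This is precisely the combination governed by the third equation, so I would replace $Y(B\tilde{W})^{\tilde{k}+1}$ by $B^{\dag}(B\tilde{W})^{\tilde{k}+1}$, and then reverse the range normalization (reading $(B\tilde{W})^{\tilde{k}}[(B\tilde{W})^{\tilde{k}}]^{\dag}X = X$) to recover $B^{\dag}B\tilde{W}X\tilde{W} = Y$.

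I expect the main obstacle to be the bookkeeping in the uniqueness step: one must insert the range projector $(B\tilde{W})^{\tilde{k}}[(B\tilde{W})^{\tilde{k}}]^{\dag}$ at exactly the right place so that the factor adjacent to $Y$ becomes $(B\tilde{W})^{\tilde{k}+1}$ rather than $(B\tilde{W})^{\tilde{k}}$, since only the former matches the third defining equation. Getting the exponent and the placement of $\tilde{W}$ correct---so that both the forward and backward applications of the range condition are legitimate---is the delicate point; the rest is routine simplification using $BB^{\dag}B = B$.
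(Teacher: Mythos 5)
Your proposal is correct and follows essentially the same route as the paper's proof: existence via $BB^{\dag}B=B$, the outer-inverse identity $X\tilde{W}B\tilde{W}X=X$, and $X\tilde{W}(B\tilde{W})^{\tilde{k}+1}=(B\tilde{W})^{\tilde{k}}$; uniqueness by inserting the projector $(B\tilde{W})^{\tilde{k}}[(B\tilde{W})^{\tilde{k}}]^{\dag}$ to create the factor $Y(B\tilde{W})^{\tilde{k}+1}$, invoking the third equation, and then removing the projector again. The delicate bookkeeping step you flag is exactly the chain the paper writes out in its uniqueness computation.
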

\begin{lemma} \normalfont \label{Inverse 3.2}
Suppose $B\in\mathbb{C}^{m\times{n}}$ and  $\tilde{W}\in\mathbb{C}^{n\times{m}}$ with $\tilde{k}=ind(\tilde{W}B)$. If $Z$ is a minimal rank W-weighted right weak Drazin inverse of $B$, then $Y_1 = \tilde{W}Z\tilde{W}BB^{\dag}$ is the unique solution to  
 \begin{center}
  $Y_1BY_1=Y_1$, $Y_1B=\tilde{W}Z\tilde{W}B$ and $(\tilde{W}B)^{\tilde{k}+1}Y_1 =(\tilde{W}B)^{\tilde{k}+1}B^{\dag}$.
 \end{center}
\end{lemma}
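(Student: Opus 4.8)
The plan is to prove the lemma by mirroring the argument used for Theorem \ref{Inverse}, but dualizing every step: where that theorem used the left-sided characterizations of the minimal rank $W$-weighted weak Drazin inverse from Theorem \ref{main1.1}, here I would invoke the right-sided characterizations of $Z$ collected in Theorem \ref{main1.2}. Concretely, from parts (iii)--(v) of Theorem \ref{main1.2} I would extract the three facts I need about $Z$: the outer-inverse identity $Z\tilde{W}B\tilde{W}Z = Z$, the null-space normalization $Z = Z[(\tilde{W}B)^{\tilde{k}}]^{\dag}(\tilde{W}B)^{\tilde{k}}$, and the defining equation $\tilde{W}(B\tilde{W})^{\tilde{k}+1}Z = (\tilde{W}B)^{\tilde{k}}$. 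The single computational identity that makes the whole dualization go through is the commutation rule $(\tilde{W}B)^{j}\tilde{W} = \tilde{W}(B\tilde{W})^{j}$, which I would use repeatedly to pass between the two weighted products.

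For existence, I would verify the three defining equations for $Y_1 = \tilde{W}Z\tilde{W}BB^{\dag}$ in turn. The idempotent relation $Y_1BY_1 = Y_1$ follows by collapsing $BB^{\dag}B = B$ in the middle and then applying $Z\tilde{W}B\tilde{W}Z = Z$. The relation $Y_1B = \tilde{W}Z\tilde{W}B$ is immediate from $BB^{\dag}B = B$. The third relation is the one where the commutation identity does the real work: writing $(\tilde{W}B)^{\tilde{k}+1}\tilde{W}Z = \tilde{W}(B\tilde{W})^{\tilde{k}+1}Z$ converts the left side into $(\tilde{W}B)^{\tilde{k}}$ by the defining equation, so that $(\tilde{W}B)^{\tilde{k}+1}Y_1 = (\tilde{W}B)^{\tilde{k}}\tilde{W}BB^{\dag} = (\tilde{W}B)^{\tilde{k}+1}B^{\dag}$, as required.

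For uniqueness, I would take an arbitrary solution $Y_1$ and run the same telescoping chain as in Theorem \ref{Inverse}. Starting from $Y_1 = Y_1 B Y_1 = \tilde{W}Z\tilde{W}B Y_1$ (using the second equation), I would insert the normalization $Z = Z[(\tilde{W}B)^{\tilde{k}}]^{\dag}(\tilde{W}B)^{\tilde{k}}$ and collapse $(\tilde{W}B)^{\tilde{k}}\tilde{W}B = (\tilde{W}B)^{\tilde{k}+1}$ to expose a factor $(\tilde{W}B)^{\tilde{k}+1}Y_1$; the third equation then replaces this by $(\tilde{W}B)^{\tilde{k}+1}B^{\dag}$, and folding the normalization back in yields exactly $Y_1 = \tilde{W}Z\tilde{W}BB^{\dag}$. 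Since this forces any solution to coincide with the exhibited $Y_1$, uniqueness follows.

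The main obstacle, and really the only subtle point, is bookkeeping the commutation identity $(\tilde{W}B)^{j}\tilde{W} = \tilde{W}(B\tilde{W})^{j}$ correctly, since the defining equation for $Z$ is phrased through the product $\tilde{W}(B\tilde{W})^{\tilde{k}+1}$ while the target relations are phrased through $(\tilde{W}B)^{\tilde{k}+1}$; every place where these two must be matched is where an error could creep in. Everything else is routine substitution, so I would concentrate the care precisely on ensuring the weight $\tilde{W}$ lands on the correct side at each transition.
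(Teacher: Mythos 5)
Your proposal is correct and follows exactly the route the paper intends: the paper omits the proof of this lemma, leaving it as the dual of Theorem \ref{Inverse}, and your dualization — invoking Theorem \ref{main1.2} (iii)--(v) for the identities $Z\tilde{W}B\tilde{W}Z=Z$, $Z=Z[(\tilde{W}B)^{\tilde{k}}]^{\dag}(\tilde{W}B)^{\tilde{k}}$, $\tilde{W}(B\tilde{W})^{\tilde{k}+1}Z=(\tilde{W}B)^{\tilde{k}}$, together with the commutation $(\tilde{W}B)^{j}\tilde{W}=\tilde{W}(B\tilde{W})^{j}$ — is precisely the mirrored argument, with both the existence verification and the telescoping uniqueness chain carried out correctly.
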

Now for any matrix $A\in\mathbb{C}^{m\times{n}}$, we will discuss the generalization of $X$, for which the minimal rank W-weighted weak Drazin inverse reduces to the following standard generalized inverses. 
\begin{itemize}
    \item If $X=A^{D,W}$ \cite{Cline3}, then  $Y=A^{\dag}A\tilde{W}X\tilde{W}=A^{\dag}A\tilde{W}A^{D,W}\tilde{W}=A^{{\dag},D,W}$ \cite{Kyrchei12}. This effectively reduces W-weighted weak MPD to W-weighted MPD.
    \item If $X=A^{\circleddagger,W}$ \cite{Ferreyra6,mos7}, then we obtain $Y=A^{\dag}A\tilde{W}A^{\circleddagger,W}\tilde{W}=A^{\dag,\circleddagger,W}$ \cite{mos5}. It concludes that W-weighted weak MPD reduces to W-MPCEP.
    \item Suppose $X=A^{\textcircled{w}_m,W}$ \cite{mos8}, then we have $Y=A^{\dag}A\tilde{W}A^{\textcircled{w}_m,W}\tilde{W}=A^{\dag,\textcircled{w}_m,W}$ \cite{Gao14}. This means W-weighted weak MPD coincides with dual of the W-m-MPWG inverse.
\end{itemize}
The mentioned generalized inverses satisfy the uniqueness conditions as specified in Theorem \ref{Inverse}.
\begin{itemize}
    \item If $X=A^{D,W}$ \cite{Cline3}, then $Y=A^{\dag}A\tilde{W}A^{D,W}\tilde{W}$ \cite{Kyrchei12} is the unique solution of the systems:
\[
YAY=Y, AY=A\tilde{W}A^{D,W}\tilde{W}~\mbox{and}~ Y(A\tilde{W})^{\tilde{k}+1} =A^{\dag}(A\tilde{W})^{\tilde{k}+1}.
\]
\item If $X=A^{\circleddagger,W}$ \cite{Ferreyra6,mos7}, then the solution of the corresponding system is uniquely determined as $Y=A^{\dag}A\tilde{W}A^{\circleddagger,W}\tilde{W}$:
\[
YAY=Y, AY=A\tilde{W}A^{\circleddagger,W}\tilde{W}~\mbox{and}~ Y(A\tilde{W})^{\tilde{k}+1} =A^{\dag}(A\tilde{W})^{\tilde{k}+1}.
\]
\item For $X=A^{\textcircled{w}_m,W}$ \cite{mos8}, then $Y=A^{\dag}A\tilde{W}A^{\textcircled{w}_m,W}\tilde{W}$ \cite{Gao14}, can be expressed uniquely for the systems:
\[
YAY=Y, AY=A\tilde{W}A^{\textcircled{w}_m,W}\tilde{W}~\mbox{and}~ Y(A\tilde{W})^{\tilde{k}+1} =A^{\dag}(A\tilde{W})^{\tilde{k}+1}.
\]
\end{itemize}
In the below example, we illustrate the uniqueness of the W-weighted weak MPD inverse of the matrix $A$, as established in Theorem \ref{Inverse}. 

\begin{example}
Let us consider the matrix \\
    \[
    A=\begin{bmatrix}
    &1&1&0&1&\\&0&1&0&0&\\&0&0&0&1&\\&0&0&1&0&\\&0&0&0&0&\\
    \end{bmatrix} ~\mbox{and}~ \tilde{W}=\begin{bmatrix}
        &1&0&0&0&1&\\&0&0&1&0&0&\\&0&1&0&0&0&\\&0&0&0&0&0&\\
    \end{bmatrix}
    \]
    for which $ind(A\tilde{W}) = \tilde{k} = 3$.
    Since $X$ is minimal rank W-weighted weak Drazin inverse of $A$, we consider the matrix 
    $X=\begin{bmatrix}
        &1&x_1&0&x_2&\\&0&0&0&0&\\&0&0&0&0&\\&0&0&0&0&\\&0&0&0&0&\\
    \end{bmatrix}$ which satisfies \[ X=(A\tilde{W})^3[(A\tilde{W})^3]^{\dag}X~\mbox{and}~X\tilde{W}(A\tilde{W})^4=(A\tilde{W})^3~[\textnormal{Theorem~\ref{main1.1}}~\textnormal{(iv)}]. \]
    Now,
        $Y=A^{\dag}A\tilde{W}X\tilde{W}
    = \begin{bmatrix}
        &1&0&x_1&0&1&\\&0&0&0&0&0&\\&0&0&0&0&0&\\&0&0&0&0&0&\\
    \end{bmatrix}$, satisfies the conditions of Theorem \ref{Inverse} i.e., $YAY=Y$, $AY=A\tilde{W}X\tilde{W}$ and $Y(A\tilde{W})^4=A^{\dag}(A\tilde{W})^4$.

\begin{itemize}
    \item For $x_1=1~\mbox{and}~x_2=2$, the minimal rank W-weighted weak Drazin inverse coincides with W-weighted Drazin inverse \[A^{D,W} =[(A\tilde{W})^D]^2A=\begin{bmatrix}
        &1&1&0&2&\\&0&0&0&0&\\&0&0&0&0&\\&0&0&0&0&\\&0&0&0&0&\\
    \end{bmatrix}.\]
    \item If  $x_1=1$, then the W-weighted weak MPD inverse reduces to the W-weighted MPD inverse of $A$.   
\end{itemize} 
\end{example}
The next result establishes the characterizations of the W-weighted weak MPD inverse.
\begin{theorem} \label{Inverse 3.3} \normalfont
Suppose $B\in\mathbb{C}^{m\times{n}}$ and $\tilde{W}\in\mathbb{C}^{n\times{m}}$ such that $\tilde{k}=ind(B\tilde{W})$ and $X$ is a minimal rank W-weighted weak Drazin inverse of $B$. The following expressions are equivalent:\\
    (i) $Y=B^{\dag}B\tilde{W}X\tilde{W}$, i.e., $Y$ is W-weighted weak MPD ;\\
    (ii) $YBY=Y$, $BYB=B\tilde{W}X\tilde{W}B$, $BY=B\tilde{W}X\tilde{W}$ and $Y(B\tilde{W})^{\tilde{k}+1}=B^{\dag}(B\tilde{W})^{\tilde{k}+1}$ ;\\
    (iii) $YBY=Y$, $BY=B\tilde{W}X\tilde{W}$ and $YB=B^{\dag}BYB$ ;\\
    (iv) $BY=B\tilde{W}X\tilde{W}$, $YB=B^{\dag}BYB$ and $Y=B^{\dag}BY$ ;\\
     (v)  $YBY=Y$, $BY=B\tilde{W}X\tilde{W}$, $Y(B\tilde{W})^{\tilde{k}+1}=B^{\dag}(B\tilde{W})^{\tilde{k}+1}$ and $YX=B^{\dag}X$ ;\\
    (vi) $Y=B^{\dag}BY$, $BY=B\tilde{W}X\tilde{W}$ and $YXB^{\dag}=B^{\dag}XB^{\dag}$ ;\\
    (vii) $Y=B^{\dag}BY$, $BY=B\tilde{W}X\tilde{W}$ and $YXB=B^{\dag}XB$ .
    \end{theorem}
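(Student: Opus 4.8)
The plan is to treat statement (i) as the hub and prove each of (ii)--(vii) equivalent to it, drawing on two inputs: Theorem~\ref{Inverse}, which already certifies that (i) holds if and only if the triple $YBY=Y$, $BY=B\tilde{W}X\tilde{W}$, $Y(B\tilde{W})^{\tilde{k}+1}=B^{\dag}(B\tilde{W})^{\tilde{k}+1}$ holds; and the identity $B\tilde{W}X\tilde{W}X=X$, which is part (vi) of Theorem~\ref{main1.1} and is available since $X$ is a minimal rank $W$-weighted weak Drazin inverse. Two elementary observations will do most of the work. First, from (i) the idempotence of $B^{\dag}B$ together with $BB^{\dag}B=B$ give at once $Y=B^{\dag}BY$ and $BY=B\tilde{W}X\tilde{W}$. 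Second, conversely, whenever $Y=B^{\dag}BY$ and $BY=B\tilde{W}X\tilde{W}$ hold simultaneously, substituting the second into the first yields $Y=B^{\dag}(BY)=B^{\dag}B\tilde{W}X\tilde{W}$, which is (i). Thus (i) is equivalent to the pair $\{\,Y=B^{\dag}BY,\ BY=B\tilde{W}X\tilde{W}\,\}$, and this serves as the workhorse for several of the characterizations.

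Using this, I would dispatch the ``reconstruction'' implications first. For (iv) the conditions $Y=B^{\dag}BY$ and $BY=B\tilde{W}X\tilde{W}$ are literally present, so the second observation gives (i) immediately; in the reverse direction the equation $YB=B^{\dag}BYB$ follows from $Y=B^{\dag}BY$ by right multiplication by $B$, settling (i)$\Leftrightarrow$(iv). For (vi) and (vii) the same pair appears, so (i) follows in the same way; in the forward direction the extra equations $YXB^{\dag}=B^{\dag}XB^{\dag}$ and $YXB=B^{\dag}XB$ are obtained from the single identity $YX=B^{\dag}X$ by right multiplication by $B^{\dag}$ and $B$, respectively. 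That identity is itself the heart of (v): in the forward direction I would compute $YX=B^{\dag}B\tilde{W}X\tilde{W}X=B^{\dag}X$, the last equality being exactly $B\tilde{W}X\tilde{W}X=X$ from Theorem~\ref{main1.1}(vi). Conversely, (v) contains the full triple of Theorem~\ref{Inverse}, so it forces (i) directly, the condition $YX=B^{\dag}X$ being redundant for that direction. Statement (ii) is handled identically: it contains that same triple (giving (i) by Theorem~\ref{Inverse}), while its extra equation $BYB=B\tilde{W}X\tilde{W}B$ is just $BY=B\tilde{W}X\tilde{W}$ multiplied on the right by $B$.

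The one manipulation that is not a one-line substitution is the passage to (i) from (iii). Here I would chain the two given relations $YBY=Y$ and $YB=B^{\dag}BYB$ as
\[
Y=YBY=(YB)\,Y=(B^{\dag}BYB)\,Y=B^{\dag}B\,(YBY)=B^{\dag}BY,
\]
so that $Y=B^{\dag}BY$ is recovered; combined with the given $BY=B\tilde{W}X\tilde{W}$ this produces (i) via the second observation. The reverse implication (i)$\Rightarrow$(iii) is routine, since $YBY=Y$ and $BY=B\tilde{W}X\tilde{W}$ come from Theorem~\ref{Inverse} and $YB=B^{\dag}BYB$ follows from $Y=B^{\dag}BY$. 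I expect this single chained identity for (iii) to be the only genuine obstacle; everything else reduces to the two observations above together with the precomputed identity $B\tilde{W}X\tilde{W}X=X$, so no properties of $X$ or of the Moore--Penrose inverse beyond $BB^{\dag}B=B$ and the idempotence of $B^{\dag}B$ should be needed.
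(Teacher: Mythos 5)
Your proposal is correct, and it takes a genuinely different route from the paper. The paper's proof is a web of pairwise implications (ii)$\Rightarrow$(i), (i)$\Leftrightarrow$(iii), (ii)$\Leftrightarrow$(iii), (ii)$\Leftrightarrow$(iv), (ii)$\Rightarrow$(v), (ii)$\Leftrightarrow$(vi), etc., each carried out by the same computational trick: insert $X\tilde{W}=(B\tilde{W})^{\tilde{k}}[(B\tilde{W})^{\tilde{k}}]^{\dag}X\tilde{W}$ (Theorem~\ref{main1.1}(iv)) so that the product $YB\tilde{W}X\tilde{W}$ becomes $Y(B\tilde{W})^{\tilde{k}+1}[(B\tilde{W})^{\tilde{k}}]^{\dag}X\tilde{W}$, then apply $Y(B\tilde{W})^{\tilde{k}+1}=B^{\dag}(B\tilde{W})^{\tilde{k}+1}$ and fold the factorization back up. You instead make (i) the hub and isolate the structural fact that (i) is equivalent to the pair $\{Y=B^{\dag}BY,\ BY=B\tilde{W}X\tilde{W}\}$, which turns almost every implication into a one-line substitution, with Theorem~\ref{main1.1}(vi) (i.e.\ $B\tilde{W}X\tilde{W}X=X$) supplying $YX=B^{\dag}X$ and the uniqueness clause of Theorem~\ref{Inverse} disposing of (ii)$\Rightarrow$(i) and (v)$\Rightarrow$(i) without recomputation. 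Your version buys two concrete improvements in completeness: the paper's implication graph never closes for (v) and (vii) --- (v) appears only as a target of (ii)$\Rightarrow$(v) and (vi)$\Rightarrow$(v), and (vii) is never addressed at all --- whereas you prove both directions for every item; and where the paper dismisses (iii)$\Rightarrow$(i) as ``obvious,'' you give the actual chain $Y=YBY=(B^{\dag}BYB)Y=B^{\dag}B(YBY)=B^{\dag}BY$, which is exactly the step that needs an argument. The paper's approach, for its part, is self-contained in each implication (it never invokes the uniqueness statement), but at the cost of repetition and the gaps just noted.
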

    \begin{proof}
    \begin{enumerate}
        \item [(ii)] $\Rightarrow$ (i): Given that $YBY=Y$, $BYB=B\tilde{W}X\tilde{W}B$, $BY=B\tilde{W}X\tilde{W}$ and $Y(B\tilde{W})^{\tilde{k}+1}=B^{\dag}(B\tilde{W})^{\tilde{k}+1}$. Now $Y=YBY=YB\tilde{W}X\tilde{W}=YB\tilde{W}(B\tilde{W})^{\tilde{k}}[(B\tilde{W})^{\tilde{k}}]^{\dag}XW=Y(B\tilde{W})^{\tilde{k}+1}[(B\tilde{W})^{\tilde{k}}]^{\dag}X\tilde{W}=B^{\dag}(B\tilde{W})^{\tilde{k}+1}[(B\tilde{W})^{\tilde{k}}]^{\dag}X\tilde{W}=B^{\dag}B\tilde{W}(B\tilde{W})^{\tilde{k}}[(B\tilde{W})^{\tilde{k}}]^{\dag}X\tilde{W}=B^{\dag}B\tilde{W}X\tilde{W}$.
        \item [(i)] $\Rightarrow$ (iii): Pre-multiplying $B^{\dag}B$ and post-multiplying $B$ to $Y=B^{\dag}B\tilde{W}X\tilde{W}$, we obtain \[B^{\dag}BYB=B^{\dag}BB^{\dag}B\tilde{W}X\tilde{W}B=B^{\dag}B\tilde{W}X\tilde{W}B=YB . \] The remaining two directly follow from Theorem \ref{Inverse}.
        \item [(iii)]$\Rightarrow$ (i): It is obvious.
        \item [(ii)] $\Rightarrow$ (iii): It is sufficient to verify the following matrix expression.
            \begin{eqnarray*}
            YB&=&YBYB
            =YB\tilde{W}X\tilde{W}B
            =YB\tilde{W}(B\tilde{W})^{\tilde{k}}[(B\tilde{W})^{\tilde{k}}]^{\dag}X\tilde{W}B\\
            &=&Y(B\tilde{W})^{\tilde{k}+1}[(B\tilde{W})^{\tilde{k}}]^{\dag}X\tilde{W}B
            =B^{\dag}(B\tilde{W})^{\tilde{k}+1}[(B\tilde{W})^{\tilde{k}}]^{\dag}X\tilde{W}B\\
            &=&B^{\dag}B\tilde{W}(B\tilde{W})^{\tilde{k}}[(B\tilde{W})^{\tilde{k}}]^{\dag}X\tilde{W}B
            =B^{\dag}B\tilde{W}X\tilde{W}B
            =B^{\dag}BYB.
            \end{eqnarray*}
        \item [(iii)] $\Rightarrow$ (ii): From $BY=B\tilde{W}X\tilde{W}$ and $YB=B^{\dag}BYB$, we have
            \begin{eqnarray*}
            Y(B\tilde{W})^{\tilde{k}+1}&=&YB\tilde{W}(B\tilde{W})^{\tilde{k}}
            =YB\tilde{W}X\tilde{W}(B\tilde{W})^{\tilde{k}+1}\\
            &=&B^{\dag}BYB\tilde{W}X\tilde{W}(B\tilde{W})^{\tilde{k}+1}
            =B^{\dag}B\tilde{W}X\tilde{W}B\tilde{W}X\tilde{W}(B\tilde{W})^{\tilde{k}+1}\\
            &=&B^{\dag}B\tilde{W}X\tilde{W}(B\tilde{W})^{\tilde{k}+1}
            =B^{\dag}B\tilde{W}(B\tilde{W})^{\tilde{k}}
            =B^{\dag}(B\tilde{W})^{\tilde{k}+1}.
            \end{eqnarray*}
        \item [(ii)]$\Rightarrow$ (iv) : Applying the matrix conditions $Y=YBY$, $BY=B\tilde{W}X\tilde{W}$ and $Y(B\tilde{W})^{\tilde{k}+1}=B^{\dag}(B\tilde{W})^{\tilde{k}+1}$ to the matrix expression $Y=YBY$, we deduce
            \begin{eqnarray*}
            Y&=&YBY
            =YB\tilde{W}X\tilde{W}
            =YB\tilde{W}(B\tilde{W})^{\tilde{k}}[(B\tilde{W})^{\tilde{k}}]^{\dag}X\tilde{W}
            =Y(B\tilde{W})^{\tilde{k}+1}[(B\tilde{W})^{\tilde{k}}]^{\dag}X\tilde{W}\\
            &=&B^{\dag}(B\tilde{W})^{\tilde{k}+1}[(B\tilde{W})^{\tilde{k}}]^{\dag}X\tilde{W}
            =B^{\dag}B\tilde{W}(B\tilde{W})^{\tilde{k}}[(B\tilde{W})^{\tilde{k}}]^{\dag}X\tilde{W}\\
            &=&B^{\dag}B\tilde{W}X\tilde{W}
            =B^{\dag}BY.
            \end{eqnarray*}
        \item [(iii)] $\Rightarrow$ (iv): Clearly, $YBY=YB\tilde{W}X\tilde{W}=B^{\dag}BYB\tilde{W}X\tilde{W}=B^{\dag}BYBY=B^{\dag}BY=Y$.\\
        \item [(iv)]$\Rightarrow$ (ii): Multiplying $BY=B\tilde{W}X\tilde{W}$ on the right-hand side of $Y$ yields
            \begin{eqnarray*}   
            YBY&=&YB\tilde{W}X\tilde{W}
            =B^{\dag}BYB\tilde{W}X\tilde{W}
            =B^{\dag}B\tilde{W}X\tilde{W}B\tilde{W}X\tilde{W}\\
            &=&B^{\dag}B\tilde{W}X\tilde{W}
            =B^{\dag}BY
            =Y. \end{eqnarray*}
        \item [(iv)]$\Rightarrow$ (iii): From the given conditions $BY=B\tilde{W}X\tilde{W}$ and $Y=B^{\dag}BY$, we derive $YBY=B^{\dag}BYBY=B^{\dag}B\tilde{W}X\tilde{W}B\tilde{W}X\tilde{W}=B^{\dag}B\tilde{W}X\tilde{W}=B^{\dag}BY=Y$.
        \item [(ii)]$\Rightarrow$ (v): We have $Y=YBY$, $BY=B\tilde{W}X\tilde{W}$ and $Y(B\tilde{W})^{\tilde{k}+1}=B^{\dag}(B\tilde{W})^{\tilde{k}+1}$, then
            \begin{eqnarray*}  
            YX&=&YBYX
            =YB\tilde{W}X\tilde{W}X
            =YB\tilde{W}(B\tilde{W})^{\tilde{k}}[(B\tilde{W})^{\tilde{k}}]^{\dag}X\tilde{W}X\\
            &=&Y(B\tilde{W})^{\tilde{k}+1}[(B\tilde{W})^{\tilde{k}}]^{\dag}X\tilde{W}X=B^{\dag}(B\tilde{W})^{\tilde{k}+1}[(B\tilde{W})^{\tilde{k}}]^{\dag}X\tilde{W}X\\
            &=&B^{\dag}B\tilde{W}(B\tilde{W})^{\tilde{k}}[(B\tilde{W})^{\tilde{k}}]^{\dag}X\tilde{W}X=B^{\dag}B\tilde{W}X\tilde{W}X
            =B^{\dag}X.
            \end{eqnarray*}
        \item [(ii)]$\Rightarrow$ (vi) : From $YBY=Y$, $BY=B\tilde{W}X\tilde{W}$ and $Y(B\tilde{W})^{\tilde{k}+1}=B^{\dag}(B\tilde{W})^{\tilde{k}+1}$, we obtain
            $YXB^{\dag}=YBYXB^{\dag}
            =YB\tilde{W}X\tilde{W}XB^{\dag}
            =YB\tilde{W}(B\tilde{W})^{\tilde{k}}[(B\tilde{W})^{\tilde{k}}]^{\dag}X\tilde{W}XB^{\dag}\\
            =Y(B\tilde{W})^{\tilde{k}+1}[(B\tilde{W})^{\tilde{k}}]^{\dag}X\tilde{W}XB^{\dag}
            =B^{\dag}(B\tilde{W})^{\tilde{k}+1}[(B\tilde{W})^{\tilde{k}}]^{\dag}X\tilde{W}XB^{\dag}\\
            =B^{\dag}B\tilde{W}(B\tilde{W})^{\tilde{k}}[(B\tilde{W})^{\tilde{k}}]^{\dag}X\tilde{W}XB^{\dag}
            =B^{\dag}B\tilde{W}X\tilde{W}XB^{\dag}
            =B^{\dag}XB^{\dag}$.
        \item [(vi)]$\Rightarrow$ (ii): Using the matrix expressions $Y=B^{\dag}BY$, $BY=B\tilde{W}X\tilde{W}$ and $YXB^{\dag}=B^{\dag}XB^{\dag}$, we get
            \[ YBY=B^{\dag}BYBY
            =B^{\dag}B\tilde{W}X\tilde{W}B\tilde{W}X\tilde{W}
            =B^{\dag}B\tilde{W}X\tilde{W}
            =B^{\dag}BY
            =Y~\mbox{and} \]
          \[ Y(B\tilde{W})^{\tilde{k}+1}=B^{\dag}BY(B\tilde{W})^{\tilde{k}+1}
            =B^{\dag}B\tilde{W}X\tilde{W}(B\tilde{W})^{\tilde{k}+1}
            =B^{\dag}B\tilde{W}(B\tilde{W})^{\tilde{k}}
            =B^{\dag}(B\tilde{W})^{\tilde{k}+1} . \]
        \item [(vi)]$\Rightarrow$ (iii): Consider the equations $Y=B^{\dag}BY$, $BY=B\tilde{W}X\tilde{W}$ and $YXB^{\dag}=B^{\dag}XB^{\dag}$. Using these equations, we establish
            \[YBY=B^{\dag}BYB\tilde{W}X\tilde{W}
            =B^{\dag}B\tilde{W}X\tilde{W}B\tilde{W}X\tilde{W}\\
            =B^{\dag}B\tilde{W}X\tilde{W}
            =B^{\dag}BY
            =Y.\]
        \item [(vi)]$\Rightarrow$ (v): Using the equations $Y=B^{\dag}BY$, $BY=B\tilde{W}X\tilde{W}$ and $YXB^{\dag}=B^{\dag}XB^{\dag}$, we derive
            \[YBY=B^{\dag}BYB\tilde{W}X\tilde{W}
            =B^{\dag}B\tilde{W}X\tilde{W}B\tilde{W}X\tilde{W}\\
            =B^{\dag}B\tilde{W}X\tilde{W}
            =B^{\dag}BY
            =Y. \]
            Further, it is shown that $Y(B\tilde{W})^{\tilde{k}+1}=B^{\dag}BY(B\tilde{W})^{\tilde{k}+1} =B^{\dag}B\tilde{W}X\tilde{W}(B\tilde{W})^{\tilde{k}+1}
            =B^{\dag}B\tilde{W}(B\tilde{W})^{\tilde{k}}
            =B^{\dag}(B\tilde{W})^{\tilde{k}+1}$~
             \mbox{and}~$YX=B^{\dag}BYX
            =B^{\dag}B\tilde{W}X\tilde{W}X
            =B^{\dag}X.$
    \end{enumerate}
    \end{proof}
We established the characterization for W-weighted weak DMP inverse.
    \begin{theorem} \label{Inverse 3.4} \normalfont
Suppose $A\in\mathbb{C}^{m\times{n}}$, $\tilde{W}\in\mathbb{C}^{n\times{m}}$ with $\tilde{k}=ind(\tilde{W}A)$ and $Z$ is a minimal rank W-weighted right weak Drazin inverse of $A$. The following expressions are equivalent:\\
    (i) $Y_1=\tilde{W}Z\tilde{W}AA^{\dag}$, i.e., $Y_1$ is W-weighted weak DMP ;\\
    (ii) $Y_1AY_1=Y_1$, $AY_1A=A\tilde{W}Z\tilde{W}A$, $Y_1A=\tilde{W}Z\tilde{W}A$ and $(\tilde{W}A)^{\tilde{k}+1}Y=(\tilde{W}A)^{\tilde{k}+1}A^{\dag}$ ;\\
    (iii) $Y_1AY_1=Y_1$, $Y_1A=\tilde{W}Z\tilde{W}A$ and $AY_1=AY_1AA^{\dag}$.\\
    (iv) $Y_1A=\tilde{W}Z\tilde{W}A$, $AY_1=AY_1AA^{\dag}$ and $Y_1=Y_1AA^{\dag}$ ;\\
     (v) $Y_1AY_1=Y_1$, $AY_1=A\tilde{W}Z\tilde{W}$, $(\tilde{W}A)^{\tilde{k}+1}Y_1=A^{\dag}(\tilde{W}A)^{\tilde{k}+1}$ and $ZY_1=ZA^{\dag}$ ;\\
    (vi) $Y_1=Y_1AA^{\dag}$, $Y_1A=\tilde{W}Z\tilde{W}A$ and $A^{\dag}ZY_1=A^{\dag}ZA^{\dag}$ ;\\
    (vii) $Y_1=Y_1AA^{\dag}$, $Y_1A=\tilde{W}Z\tilde{W}A$ and $AZY_1=AZA^{\dag}$.
    \end{theorem}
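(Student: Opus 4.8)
The plan is to prove Theorem \ref{Inverse 3.4} as the exact right-hand dual of Theorem \ref{Inverse 3.3}, replacing the role of the minimal rank $W$-weighted weak Drazin inverse (governed by Theorem \ref{main1.1}) with the minimal rank $W$-weighted right weak Drazin inverse (governed by Theorem \ref{main1.2}), and interchanging left and right multiplication throughout. The two facts I would extract from Theorem \ref{main1.2}(iv) at the outset are the factorization $Z = Z[(\tilde{W}A)^{\tilde{k}}]^{\dag}(\tilde{W}A)^{\tilde{k}}$ and the defining identity $\tilde{W}(A\tilde{W})^{\tilde{k}+1}Z = (\tilde{W}A)^{\tilde{k}}$; these are the mirror images of the two identities for $X$ used in Theorem \ref{Inverse 3.3}. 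The base characterization $Y_1 = \tilde{W}Z\tilde{W}AA^{\dag}$ together with the triple $Y_1AY_1 = Y_1$, $Y_1A = \tilde{W}Z\tilde{W}A$, $(\tilde{W}A)^{\tilde{k}+1}Y_1 = (\tilde{W}A)^{\tilde{k}+1}A^{\dag}$ is already supplied by Lemma \ref{Inverse 3.2}, which plays the part that Theorem \ref{Inverse} played for the MPD case.

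I would then run the same cyclic scheme of implications used in Theorem \ref{Inverse 3.3}. To see the mechanism, consider (ii) $\Rightarrow$ (i). Starting from $Y_1 = Y_1AY_1$ and substituting $Y_1A = \tilde{W}Z\tilde{W}A$, I insert the factorization of $Z$ and merge the weight via $(\tilde{W}A)^{\tilde{k}}\tilde{W}A = (\tilde{W}A)^{\tilde{k}+1}$ to obtain $Y_1 = \tilde{W}Z[(\tilde{W}A)^{\tilde{k}}]^{\dag}(\tilde{W}A)^{\tilde{k}+1}Y_1$; then I apply $(\tilde{W}A)^{\tilde{k}+1}Y_1 = (\tilde{W}A)^{\tilde{k}+1}A^{\dag}$ to pull $A^{\dag}$ out on the right, and finally collapse $(\tilde{W}A)^{\tilde{k}+1}A^{\dag}$ back through the factorization to recover $\tilde{W}Z\tilde{W}AA^{\dag} = Y_1$. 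Every other implication — (i) $\Rightarrow$ (iii), (iii) $\Rightarrow$ (i), (ii) $\Leftrightarrow$ (iii), (ii) $\Rightarrow$ (iv), (iv) $\Rightarrow$ (ii), and the remaining arrows into (v), (vi), (vii) — transposes the corresponding step of Theorem \ref{Inverse 3.3}: each left multiplication by $B^{\dag}B$ becomes a right multiplication by $AA^{\dag}$, each appearance of $BY = B\tilde{W}X\tilde{W}$ becomes $Y_1A = \tilde{W}Z\tilde{W}A$, and the idempotent identities $AA^{\dag}A = A$ and $(AA^{\dag})^{2} = AA^{\dag}$ replace their left-sided counterparts.

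The step I expect to require the most care is bookkeeping the placement of the weight $\tilde{W}$: unlike the unweighted DMP setting, the factor $\tilde{W}$ now sits between $Z$ and $A$, so each use of $Z = Z[(\tilde{W}A)^{\tilde{k}}]^{\dag}(\tilde{W}A)^{\tilde{k}}$ must be coupled with $(\tilde{W}A)^{\tilde{k}}\tilde{W}A = (\tilde{W}A)^{\tilde{k}+1}$ to absorb the weight correctly, and one must check that the rank/range hypothesis enters only through Theorem \ref{main1.2}. I would also reconcile the clauses in (v)--(vii) whose weights or powers are written on the opposite side (for instance the $AY_1 = A\tilde{W}Z\tilde{W}$ and $A^{\dag}(\tilde{W}A)^{\tilde{k}+1}$ expressions), rewriting them in their correct dual form before verifying them, and I would add the return implications into (v) and (vii) needed to close the equivalence cycle, exactly paralleling the MPD argument.
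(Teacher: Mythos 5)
Your proposal is correct and matches the paper's intended approach: the paper states this theorem without any written proof, leaving it as the left--right dual of Theorem \ref{Inverse 3.3}, and your dualization via Theorem \ref{main1.2}(iv) (the identities $Z = Z[(\tilde{W}A)^{\tilde{k}}]^{\dag}(\tilde{W}A)^{\tilde{k}}$ and $\tilde{W}(A\tilde{W})^{\tilde{k}+1}Z=(\tilde{W}A)^{\tilde{k}}$) together with Lemma \ref{Inverse 3.2}, replacing each left multiplication by $B^{\dag}B$ with a right multiplication by $AA^{\dag}$, is exactly that argument, and your sample computation for (ii) $\Rightarrow$ (i) transposes the paper's corresponding step verbatim. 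You also rightly flag that the clauses $AY_1=A\tilde{W}Z\tilde{W}$ and $(\tilde{W}A)^{\tilde{k}+1}Y_1=A^{\dag}(\tilde{W}A)^{\tilde{k}+1}$ in (v) are mis-dualized typos in the statement, which must be read as $Y_1A=\tilde{W}Z\tilde{W}A$ and $(\tilde{W}A)^{\tilde{k}+1}Y_1=(\tilde{W}A)^{\tilde{k}+1}A^{\dag}$ for the equivalence to go through.
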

     If $X=A^{D,W}$, then the equivalent conditions of Theorem \ref{Inverse 3.3} satisfy the W-weighted MPD inverse, i.e., $Y=A^{\dag,D,W}$. 
     
    \begin{theorem} \normalfont
        Let us consider $B\in\mathbb{C}^{m\times{n}}$, $\tilde{W}\in\mathbb{C}^{n\times{m}}$ and $\tilde{k}=ind(B\tilde{W})$. Suppose $X$ is a minimal rank $W$-weighted weak Drazin inverse of $B$ with $X=B^{D,W}$. The following statements are equivalent:\\ \normalfont
    (i)  $Y=B^{\dag}B\tilde{W}B^{D,W}\tilde{W}$ ;\\
    (ii) $YBY=Y$, $BYB=B\tilde{W}B^{D,W}\tilde{W}B$, $BY=B\tilde{W}B^{D,W}\tilde{W}$ and $Y(B\tilde{W})^{\tilde{k}+1}=B^{\dag}(B\tilde{W})^{\tilde{k}+1}$ \cite{Kyrchei12};\\
    (iii) $YBY=Y$, $BY=B\tilde{W}B^{D,W}\tilde{W}$ and $YB=B^{\dag}BYB$ ;\\
    (iv) $BY=B\tilde{W}B^{D,W}\tilde{W}$, $YB=B^{\dag}BYB$ and $Y=B^{\dag}BY$ ;\\
    (v) $YBY=Y$, $BY=B\tilde{W}B^{D,W}\tilde{W}$, $Y(B\tilde{W})^{\tilde{k}+1}=B^{\dag}(B\tilde{W})^{\tilde{k}+1}$ and $YB^{D,W}=B^{\dag}B^{D,W}$ ;\\
    (vi) $Y=B^{\dag}BY$, $BY=B\tilde{W}B^{D,W}\tilde{W}$ and $YB^{D,W}B^{\dag}=B^{\dag}B^{D,W}B^{\dag}$ ;\\
    (vii) $Y=B^{\dag}BY$, $BY=B\tilde{W}B^{D,W}\tilde{W}$ and $YB^{D,W}B=B^{\dag}B^{D,W}B$ .
    \end{theorem}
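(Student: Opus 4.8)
The plan is to recognize that the present theorem is nothing but the specialization of Theorem \ref{Inverse 3.3} to the case $X = B^{D,W}$, so that the entire network of equivalences (i)--(vii) will be inherited from that result rather than re-proved from scratch. The crucial preliminary step is therefore to confirm that the W-weighted Drazin inverse $B^{D,W}$ is indeed a legitimate minimal rank $W$-weighted weak Drazin inverse of $B$; once this is in hand, the statements (i)--(vii) here are obtained verbatim from statements (i)--(vii) of Theorem \ref{Inverse 3.3} upon substituting $X = B^{D,W}$, and the equivalences follow at once.

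To verify that $B^{D,W}$ qualifies, I would use the representation $B^{D,W} = [(B\tilde{W})^{D}]^{2}B$ together with the standard power identity for the Drazin inverse, namely $(A^{D})^{2}A^{\tilde{k}+2} = A^{\tilde{k}}$ applied to $A = B\tilde{W}$ with $\tilde{k} = ind(A)$. This yields $B^{D,W}\tilde{W}(B\tilde{W})^{\tilde{k}+1} = [(B\tilde{W})^{D}]^{2}(B\tilde{W})^{\tilde{k}+2} = (B\tilde{W})^{\tilde{k}}$, which is exactly the defining W-weighted weak Drazin equation. It then remains to check the rank constraint $rank(B^{D,W}) = rank((B\tilde{W})^{\tilde{k}})$, a well-known property of the weighted Drazin inverse; equivalently, one may establish $\mathcal{R}(B^{D,W}) = \mathcal{R}((B\tilde{W})^{\tilde{k}})$ and appeal to the equivalence (i) $\Leftrightarrow$ (ii) of Theorem \ref{main1.1}.

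With $B^{D,W}$ now recognized as a minimal rank $W$-weighted weak Drazin inverse, I would simply invoke Theorem \ref{Inverse 3.3} with this choice of $X$. A term-by-term comparison shows that every condition of the present theorem coincides with the corresponding condition of Theorem \ref{Inverse 3.3} after the replacement $X \mapsto B^{D,W}$: for instance $BY = B\tilde{W}X\tilde{W}$ becomes $BY = B\tilde{W}B^{D,W}\tilde{W}$ throughout, while $YX = B^{\dag}X$ in (v), $YXB^{\dag} = B^{\dag}XB^{\dag}$ in (vi), and $YXB = B^{\dag}XB$ in (vii) become $YB^{D,W} = B^{\dag}B^{D,W}$, $YB^{D,W}B^{\dag} = B^{\dag}B^{D,W}B^{\dag}$, and $YB^{D,W}B = B^{\dag}B^{D,W}B$, respectively. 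Hence no further computation is needed.

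The main point to be careful about --- and essentially the only place where anything genuinely has to be checked --- is the preliminary qualification of $B^{D,W}$ as a minimal rank inverse, in particular the rank equality; the rest is an immediate transcription of Theorem \ref{Inverse 3.3}. As a consistency remark, one may note that condition (ii) here reproduces the known defining system of the W-weighted MPD inverse from \cite{Kyrchei12}, confirming that the specialization indeed recovers $Y = B^{\dag,D,W}$.
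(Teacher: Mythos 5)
Your proposal is correct and matches the paper's treatment: the paper offers no separate proof of this theorem, presenting it (immediately after the remark "If $X=A^{D,W}$, then the equivalent conditions of Theorem \ref{Inverse 3.3} satisfy the W-weighted MPD inverse") as precisely the specialization of Theorem \ref{Inverse 3.3} to $X=B^{D,W}$, which is exactly your argument. Your additional verification that $B^{D,W}=[(B\tilde{W})^{D}]^{2}B$ satisfies both the defining equation $B^{D,W}\tilde{W}(B\tilde{W})^{\tilde{k}+1}=(B\tilde{W})^{\tilde{k}}$ and the rank condition is a point the paper takes for granted (citing \cite{mos1} in the introduction), so including it only makes the argument more complete.
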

The following results present the W-weighted DMP and MPD inverses through the projections of certain ranges and null spaces.

    \begin{lemma} \label{main 3.6} \normalfont
        Suppose $B\in\mathbb{C}^{m\times{n}}$ and $\tilde{W}\in\mathbb{C}^{n\times{m}}$ such that $\tilde{k}=ind(B\tilde{W})$. If $X$ is minimal rank W-weighted weak Drazin inverse of $B$ and $Y=B^{\dag}B\tilde{W}X\tilde{W}$, then
    \end{lemma}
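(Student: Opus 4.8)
The plan is to exploit the fact, already recorded in Theorem~\ref{Inverse}, that the candidate $Y=B^{\dag}B\tilde{W}X\tilde{W}$ satisfies $YBY=Y$; this places $Y$ in the class $B\{2\}$ of outer inverses, so that $Y=B^{(2)}_{\mathcal{R}(Y),\mathcal{N}(Y)}$ is completely determined once its range and null space are pinned down. I would therefore reduce the entire statement to two range/null-space computations together with the identification of the two idempotents $BY$ and $YB$ as the asserted projectors. Theorem~\ref{Inverse} also hands me the convenient reductions $BY=B\tilde{W}X\tilde{W}$ and $Y(B\tilde{W})^{\tilde{k}+1}=B^{\dag}(B\tilde{W})^{\tilde{k}+1}$, which I will use repeatedly, as well as the identity $Y=B^{\dag}BY$.

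For the range, note first that $YBY=Y$ yields $\mathcal{R}(Y)=\mathcal{R}(YB)$ and $\mathcal{N}(Y)=\mathcal{N}(BY)$. Using $X\tilde{W}B\tilde{W}X=X$ from Theorem~\ref{main1.1}(iii), I would check that $BY=B\tilde{W}X\tilde{W}$ is idempotent, and then compute $\mathcal{R}(BY)=\mathcal{R}(B\tilde{W}X\tilde{W})=\mathcal{R}(B\tilde{W}X)$; feeding in the range condition $\mathcal{R}(X)=\mathcal{R}((B\tilde{W})^{\tilde{k}})$ and the index stabilization $\mathcal{R}((B\tilde{W})^{\tilde{k}+1})=\mathcal{R}((B\tilde{W})^{\tilde{k}})$ collapses this to $\mathcal{R}(BY)=\mathcal{R}((B\tilde{W})^{\tilde{k}})$. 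Pulling this back through $B^{\dag}$ via $Y=B^{\dag}\!\cdot\!BY$ gives $\mathcal{R}(Y)=\mathcal{R}\big(B^{\dag}(B\tilde{W})^{\tilde{k}}\big)$, which is manifestly independent of the chosen $X$.

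For the null space I would show $\mathcal{N}(Y)=\mathcal{N}(BY)=\mathcal{N}(B\tilde{W}X\tilde{W})=\mathcal{N}(X\tilde{W})$. The one nontrivial inclusion is $\mathcal{N}(B\tilde{W}X\tilde{W})\subseteq\mathcal{N}(X\tilde{W})$: if $B\tilde{W}X\tilde{W}v=0$ then $X\tilde{W}v\in\mathcal{N}(B\tilde{W})$, while $X\tilde{W}v\in\mathcal{R}(X)=\mathcal{R}((B\tilde{W})^{\tilde{k}})$; the core-nilpotent bound $\mathcal{R}((B\tilde{W})^{\tilde{k}})\cap\mathcal{N}(B\tilde{W})=\{0\}$, which follows from $\mathcal{N}((B\tilde{W})^{\tilde{k}+1})=\mathcal{N}((B\tilde{W})^{\tilde{k}})$, forces $X\tilde{W}v=0$. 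Finally I would record that $BY$ is the (generally oblique) projector onto $\mathcal{R}((B\tilde{W})^{\tilde{k}})$ along $\mathcal{N}(X\tilde{W})$ and that $YB=B^{\dag}BYB$ projects onto $\mathcal{R}(Y)$, completing the $B^{(2)}_{E,F}$ description. The main obstacle I anticipate is bookkeeping around non-uniqueness: since $X$ is only \emph{a} minimal rank $W$-weighted weak Drazin inverse, $Y$ and its null space $\mathcal{N}(X\tilde{W})$ vary with $X$, so the argument must cleanly separate the $X$-invariant data (the range $\mathcal{R}(B^{\dag}(B\tilde{W})^{\tilde{k}})$ and the target space $\mathcal{R}((B\tilde{W})^{\tilde{k}})$ of $BY$) from the $X$-dependent data, and the oblique rather than orthogonal nature of the idempotents must be handled with care when matching them to whatever projection notation the statement uses.
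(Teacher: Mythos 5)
Your overall strategy --- treat $Y$ as an outer inverse via $YBY=Y$ and pin down the ranges and null spaces of $BY$, $YB$ and $Y$ --- is exactly the strategy of the paper's proof, and the steps you actually execute are sound. The differences are only in how individual subspace identities are justified: where you invoke $\mathcal{R}(X)=\mathcal{R}((B\tilde{W})^{\tilde{k}})$ plus index stabilization, the paper chases elements using Theorem~\ref{main1.1}(iv), writing $X=(B\tilde{W})^{\tilde{k}}[(B\tilde{W})^{\tilde{k}}]^{\dag}X$; and for the inclusion $\mathcal{N}(B\tilde{W}X\tilde{W})\subseteq\mathcal{N}(X\tilde{W})$ you use the core--nilpotent fact $\mathcal{R}((B\tilde{W})^{\tilde{k}})\cap\mathcal{N}(B\tilde{W})=\{0\}$, whereas the paper gets it in one line from the outer-inverse identity of Theorem~\ref{main1.1}(v): if $B\tilde{W}X\tilde{W}v=0$ then $X\tilde{W}v=X\tilde{W}B\tilde{W}X\tilde{W}v=0$. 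Your route is correct but heavier than needed at that point. Also, your formula $\mathcal{R}(Y)=\mathcal{R}(B^{\dag}(B\tilde{W})^{\tilde{k}})$ matches the statement's $\mathcal{R}(B^{\dag}(B\tilde{W})^{\tilde{k}+1})$ only after one more appeal to $\mathcal{R}((B\tilde{W})^{\tilde{k}+1})=\mathcal{R}((B\tilde{W})^{\tilde{k}})$; since the lemma is stated with exponent $\tilde{k}+1$, you should say this explicitly.

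The one genuine omission is in part (ii): the lemma asserts $YB=P_{\mathcal{R}(B^{\dag}(B\tilde{W})^{\tilde{k}+1}),\,\mathcal{N}(X\tilde{W}B)}$, and your sketch only records that $YB$ ``projects onto $\mathcal{R}(Y)$'' --- you never identify its null space as $\mathcal{N}(X\tilde{W}B)$. This does not follow from anything you wrote; in particular it is not the same as $\mathcal{N}(Y)=\mathcal{N}(X\tilde{W})$, which is a subspace of $\mathbb{C}^{m}$, while $\mathcal{N}(YB)\subseteq\mathbb{C}^{n}$. The repair is routine with the tools you already have: since $BB^{\dag}B=B$, left multiplication by $B$ shows $\mathcal{N}(YB)=\mathcal{N}(B^{\dag}B\tilde{W}X\tilde{W}B)=\mathcal{N}(B\tilde{W}X\tilde{W}B)$, and then $\mathcal{N}(B\tilde{W}X\tilde{W}B)=\mathcal{N}(X\tilde{W}B)$ follows from $X\tilde{W}B=X\tilde{W}(B\tilde{W}X\tilde{W}B)$ (or from your intersection argument applied to the vector $X\tilde{W}Bv$); this is precisely the chain the paper uses. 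With that paragraph added, your proof covers all three assertions and is essentially the paper's argument.
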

    \begin{enumerate}
        \item[(i)] $BY=P_{\mathcal{R}{{((B\tilde{W})^{\tilde{k}}})}_,\mathcal{N}(X\tilde{W})}$ ;
        \item[(ii)] $YB=P_{\mathcal{R}{({B^{\dag}(B\tilde{W})^{\tilde{k}+1}})}_, \mathcal{N}(X\tilde{W}B)}$ ;
        \item[(iii)] $Y={B^{(2)}}_{\mathcal{R}{({B^{\dag}(B\tilde{W})^{\tilde{k}+1}})}_,\mathcal{N}(X\tilde{W})}$.
    \end{enumerate}
    \begin{proof}
    \begin{enumerate}
        \item[(i)] By Theorem \ref{Inverse}, we have $BY=B\tilde{W}X\tilde{W}$ which implies $\mathcal{R}(BY)=\mathcal{R}(B\tilde{W}X\tilde{W})$. Now, we need to prove $\mathcal{R}(B\tilde{W}X\tilde{W})=\mathcal{R}((B\tilde{W})^{\tilde{k}})$. Suppose  $x\in\mathcal{R}(B\tilde{W}X\tilde{W})$, i.e., \begin{eqnarray*}
             x=B\tilde{W}X\tilde{W}y &=& B\tilde{W}(B\tilde{W})^{\tilde{k}}[(B\tilde{W})^{\tilde{k}}]^{\dag}X\tilde{W}y~~[\textnormal {Theorem~\ref{main1.1}}~\textnormal{(iv)}]\\
            &=&(B\tilde{W})^{\tilde{k}}B\tilde{W}[(B\tilde{W})^{\tilde{k}}]^{\dag}X\tilde{W}y=(B\tilde{W})^{\tilde{k}}z.
        \end{eqnarray*} Hence $\mathcal{R}(B\tilde{W}X\tilde{W}) \subseteq \mathcal{R}((B\tilde{W})^{\tilde{k}})$. Similarly, by using Theorem \ref{main1.1} (vi), we obtain $\mathcal{R}((B\tilde{W})^{\tilde{k}}) \subseteq \mathcal{R}(B\tilde{W}X\tilde{W})$. This implies $\mathcal{R}(B\tilde{W}X\tilde{W})=\mathcal{R}((B\tilde{W})^{\tilde{k}})$. So $\mathcal{R}(BY)=\mathcal{R}(B\tilde{W}X\tilde{W})=\mathcal{R}((B\tilde{W})^{\tilde{k}})$.
        Moreover, considering $\mathcal{N}(BY)=\mathcal{N}(B\tilde{W}X\tilde{W})$ and  by Theorem  \ref{main1.1} (v), we obtain  
        $\mathcal{N}(B\tilde{W}X\tilde{W})=\mathcal{N}(X\tilde{W})$.
        So $\mathcal{N}(BY)=\mathcal{N}(X\tilde{W})$.
        \item[(ii)] We have $\mathcal{R}(YB)=\mathcal{R}(B^{\dag}B\tilde{W}X\tilde{W}B)$, since $Y=B^{\dag}B\tilde{W}X\tilde{W}$. Now, we will verify the condition $\mathcal{R}(B^{\dag}B\tilde{W}X\tilde{W}B) \subseteq \mathcal{R}(B^{\dag}(B\tilde{W})^{\tilde{k}+1})$ by using Theorem \ref{main1.1} (iv). Let $p\in\mathcal{R}(B^{\dag}B\tilde{W}X\tilde{W})$ that implies \begin{align*}
            p=B^{\dag}B\tilde{W}X\tilde{W}q=B^{\dag}B\tilde{W}(B\tilde{W})^{\tilde{k}}[(B\tilde{W})^{\tilde{k}}]^{\dag}X\tilde{W}Bq\\=B^{\dag}(B\tilde{W})^{\tilde{k}+1}[(B\tilde{W})^{\tilde{k}}]^{\dag}X\tilde{W}Bq=B^{\dag}(B\tilde{W})^{\tilde{k}+1}u.
        \end{align*} Similarly, $\mathcal{R}(B^{\dag}(B\tilde{W})^{\tilde{k}+1}) \subseteq \mathcal{R}(B^{\dag}B\tilde{W}X\tilde{W}B)$. Hence
            $$\mathcal{R}(B^{\dag}B\tilde{W}X\tilde{W}B)=\mathcal{R}(B^{\dag}(B\tilde{W})^{\tilde{k}+1}) = \mathcal{R}(YB).$$
        Furthermore, $\mathcal{N}(YB)=\mathcal{N}(B^{\dag}B\tilde{W}X\tilde{W}B)=\mathcal{N}(B\tilde{W}X\tilde{W}B)=\mathcal{N}(X\tilde{W}B)$.
        \item [(iii)] The proof follows directly from the equality: $\mathcal{R}(Y)=\mathcal{R}(YB)=\mathcal{R}(B^{\dag}B\tilde{W}X\tilde{W}B)=\mathcal{R}(B^{\dag}(B\tilde{W})^{\tilde{k}+1})$ and $\mathcal{N}(Y)=\mathcal{N}(BY)=\mathcal{N}(B\tilde{W}X\tilde{W})=\mathcal{N}(X\tilde{W})$.
        \end{enumerate}
    \end{proof}
    \begin{lemma}\label{main1.4} \normalfont
        Suppose $B\in\mathbb{C}^{m\times{n}}$ and $\tilde{W}\in\mathbb{C}^{n\times{m}}$ with $\tilde{k}=ind(\tilde{W}B)$. If $Z$ is minimal rank W-weighted right weak Drazin inverse of $B$ and $Y_1=\tilde{W}Z\tilde{W}BB^{\dag}$, then
    \end{lemma}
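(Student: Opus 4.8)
Since the conclusions of Lemma~\ref{main1.4} are the exact transpose-dual of Lemma~\ref{main 3.6}, I expect the three assertions to read
\begin{enumerate}
\item[(i)] $Y_1B=P_{\mathcal{R}(\tilde{W}Z),\,\mathcal{N}((\tilde{W}B)^{\tilde{k}})}$;
\item[(ii)] $BY_1=P_{\mathcal{R}(B\tilde{W}Z),\,\mathcal{N}((\tilde{W}B)^{\tilde{k}+1}B^{\dag})}$;
\item[(iii)] $Y_1=B^{(2)}_{\mathcal{R}(\tilde{W}Z),\,\mathcal{N}((\tilde{W}B)^{\tilde{k}+1}B^{\dag})}$.
\end{enumerate}
The overall plan is to mirror the proof of Lemma~\ref{main 3.6}, systematically interchanging the roles of range and null space and replacing every appeal to Theorem~\ref{main1.1} by the matching item of Theorem~\ref{main1.2} and every appeal to Theorem~\ref{Inverse} by Lemma~\ref{Inverse 3.2}. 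The latter supplies the three working identities $Y_1BY_1=Y_1$, $Y_1B=\tilde{W}Z\tilde{W}B$ and $(\tilde{W}B)^{\tilde{k}+1}Y_1=(\tilde{W}B)^{\tilde{k}+1}B^{\dag}$, which are the analogues of the identities used for $Y$ in Lemma~\ref{main 3.6}.

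For (i) I would first observe that $Y_1BY_1=Y_1$ forces $(Y_1B)^2=Y_1B$, so $Y_1B$ is the projector onto $\mathcal{R}(Y_1B)$ along $\mathcal{N}(Y_1B)$; it then remains to identify these two subspaces using $Y_1B=\tilde{W}Z\tilde{W}B$. The range is the easy (dual) step: from Theorem~\ref{main1.2}(iii) one has $Z=Z\tilde{W}B\tilde{W}Z$, hence $\tilde{W}Z=(\tilde{W}Z\tilde{W}B)\tilde{W}Z$, giving $\mathcal{R}(\tilde{W}Z)\subseteq\mathcal{R}(\tilde{W}Z\tilde{W}B)$, and the reverse inclusion is immediate, so $\mathcal{R}(Y_1B)=\mathcal{R}(\tilde{W}Z)$. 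For the null space I would use Theorem~\ref{main1.2}(iv) to write $\tilde{W}Z\tilde{W}B=\tilde{W}Z[(\tilde{W}B)^{\tilde{k}}]^{\dag}(\tilde{W}B)^{\tilde{k}+1}$, which yields $\mathcal{N}((\tilde{W}B)^{\tilde{k}+1})\subseteq\mathcal{N}(Y_1B)$, and then combine Theorem~\ref{main1.2}(i) with (vi) to obtain the key identity $(\tilde{W}B)^{\tilde{k}}=(\tilde{W}B)^{\tilde{k}}\,\tilde{W}Z\tilde{W}B$, which gives the reverse inclusion. Passing between the exponents $\tilde{k}$ and $\tilde{k}+1$ via $\mathcal{N}((\tilde{W}B)^{\tilde{k}})=\mathcal{N}((\tilde{W}B)^{\tilde{k}+1})$ (valid since $\tilde{k}=ind(\tilde{W}B)$) then closes (i).

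For (ii) the same two identities are post-multiplied by $B^{\dag}$. Idempotency of $BY_1$ again follows from $Y_1BY_1=Y_1$; the range collapses as $\mathcal{R}(B\tilde{W}Z\tilde{W}BB^{\dag})=\mathcal{R}(B\tilde{W}Z\tilde{W}B)=\mathcal{R}(B\tilde{W}Z)$ by $BB^{\dag}B=B$ together with Theorem~\ref{main1.2}(iii); and for the null space I would exhibit the two factorizations $B\tilde{W}Z\tilde{W}BB^{\dag}=\bigl(B\tilde{W}Z[(\tilde{W}B)^{\tilde{k}}]^{\dag}\bigr)(\tilde{W}B)^{\tilde{k}+1}B^{\dag}$ and $(\tilde{W}B)^{\tilde{k}+1}B^{\dag}=\tilde{W}(B\tilde{W})^{\tilde{k}}\bigl(B\tilde{W}Z\tilde{W}BB^{\dag}\bigr)$, the existence of which forces $\mathcal{N}(BY_1)=\mathcal{N}((\tilde{W}B)^{\tilde{k}+1}B^{\dag})$. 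Finally, (iii) is immediate: $Y_1BY_1=Y_1$ makes $Y_1$ an outer inverse of $B$, and the same identity yields $\mathcal{R}(Y_1)=\mathcal{R}(Y_1B)$ and $\mathcal{N}(Y_1)=\mathcal{N}(BY_1)$, so the subspaces computed in (i) and (ii) identify $Y_1$ as the stated $B^{(2)}_{E,F}$.

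I expect the main obstacle to be precisely the null-space equalities, which are the transpose-duals of the (routine) range computations in Lemma~\ref{main 3.6} but cannot simply be read off by formally transposing that proof inside the paper's notation: each inclusion must instead be produced by realizing $\tilde{W}Z\tilde{W}B$ (respectively $BY_1$) and $(\tilde{W}B)^{\tilde{k}}$ (respectively $(\tilde{W}B)^{\tilde{k}+1}B^{\dag}$) as explicit left-hand multiples of one another, and by repeatedly invoking $\mathcal{N}((\tilde{W}B)^{\tilde{k}})=\mathcal{N}((\tilde{W}B)^{\tilde{k}+1})$ to move between the two exponents. Once these factorizations are in hand the argument is purely formal.
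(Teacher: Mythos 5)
Your reconstruction of the three conclusions matches the paper's statement (you list items (i) and (ii) in the opposite order, but the content is identical), and your proof is mathematically correct: the idempotency arguments, the range identifications via Theorem~\ref{main1.2}(iii), the factorization $\tilde{W}Z\tilde{W}B=\tilde{W}Z[(\tilde{W}B)^{\tilde{k}}]^{\dag}(\tilde{W}B)^{\tilde{k}+1}$, the key identity $(\tilde{W}B)^{\tilde{k}}=(\tilde{W}B)^{\tilde{k}}\tilde{W}Z\tilde{W}B$ obtained from Theorem~\ref{main1.2}(i) and (vi), and the two null-space factorizations for $BY_1$ all check out. The paper itself gives no proof of this lemma, treating it as the transpose-dual of Lemma~\ref{main 3.6}, and your argument is exactly that dualization (Theorem~\ref{main1.2} and Lemma~\ref{Inverse 3.2} replacing Theorem~\ref{main1.1} and Theorem~\ref{Inverse}), so it supplies the omitted proof in precisely the intended way.
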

    \begin{enumerate}
        \item[(i)] $BY_1=P_{\mathcal{R}{{(B\tilde{W}Z)}}_,\mathcal{N}((\tilde{W}B)^{\tilde{k}+1}B^{\dag})}$ ;
        \item[(ii)] $Y_1B=P_{\mathcal{R}{(\tilde{W}Z)}_, \mathcal{N}((\tilde{W}B)^{\tilde{k}})}$ ;
        \item[(iii)] $Y_1={B^{(2)}}_{\mathcal{R}{(\tilde{W}Z)}_,   {\mathcal{N}((\tilde{W}B)^{\tilde{k}+1}B^{\dag})}}.$
    \end{enumerate}
    \begin{theorem} \normalfont
    Consider $B\in\mathbb{C}^{m\times{n}}$, $\tilde{W}\in\mathbb{C}^{n\times{m}}$ and $\tilde{k}=ind(B\tilde{W})$. Suppose $X$ is a minimal rank W-weighted weak Drazin inverse of $B$. Then $Y=B^{\dag}B\tilde{W}X\tilde{W}$ is the unique solution to 
        \[
        BY=P_{\mathcal{R}{{((BW)^{\tilde{k}}})}_,\mathcal{N}(X\tilde{W})}~~\mbox{and}~~\mathcal{R}(Y) \subseteq \mathcal{R}(B^*). 
        \]
        \end{theorem}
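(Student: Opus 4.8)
The plan is to verify that the stated $Y=B^{\dag}B\tilde{W}X\tilde{W}$ meets the two defining conditions (\emph{existence}), and then to show that any matrix meeting them coincides with this $Y$ (\emph{uniqueness}). The existence half is essentially a repackaging of results already established, while uniqueness reduces to the elementary fact that $B^{\dag}B$ is the orthogonal projector onto $\mathcal{R}(B^*)$.

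For existence, the first equation $BY=P_{\mathcal{R}((B\tilde{W})^{\tilde{k}}),\mathcal{N}(X\tilde{W})}$ is exactly Lemma \ref{main 3.6}(i), so nothing new is required there. For the range condition I would note that $Y$ is a left multiple of $B^{\dag}$, which gives $\mathcal{R}(Y)\subseteq\mathcal{R}(B^{\dag})$; since the Moore--Penrose inverse satisfies $\mathcal{R}(B^{\dag})=\mathcal{R}(B^*)$, this yields $\mathcal{R}(Y)\subseteq\mathcal{R}(B^*)$, completing the existence part.

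For uniqueness I would use the equivalence $\mathcal{R}(Y)\subseteq\mathcal{R}(B^*)\iff B^{\dag}BY=Y$, which holds because $B^{\dag}B=P_{\mathcal{R}(B^*)}$ acts as the identity precisely on matrices whose columns lie in $\mathcal{R}(B^*)$. Suppose $Y_1$ and $Y_2$ both satisfy the two conditions. The first condition forces $BY_1=BY_2=P_{\mathcal{R}((B\tilde{W})^{\tilde{k}}),\mathcal{N}(X\tilde{W})}$, and pre-multiplying by $B^{\dag}$ gives $B^{\dag}BY_1=B^{\dag}BY_2$. The range condition then collapses the two sides into $Y_1$ and $Y_2$ respectively, so $Y_1=Y_2=B^{\dag}P_{\mathcal{R}((B\tilde{W})^{\tilde{k}}),\mathcal{N}(X\tilde{W})}$, which settles uniqueness.

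I do not anticipate a genuine obstacle; the argument is short once Lemma \ref{main 3.6} is available. The only points deserving a word of justification are the two standard Moore--Penrose identities ($\mathcal{R}(B^{\dag})=\mathcal{R}(B^*)$ and $B^{\dag}B=P_{\mathcal{R}(B^*)}$) and the well-definedness of the oblique projector $P_{\mathcal{R}((B\tilde{W})^{\tilde{k}}),\mathcal{N}(X\tilde{W})}$, namely the direct-sum decomposition $\mathbb{C}^{m}=\mathcal{R}((B\tilde{W})^{\tilde{k}})\oplus\mathcal{N}(X\tilde{W})$. The latter is already implicit in Lemma \ref{main 3.6}(i), since that lemma asserts the projector exists, so no separate verification is needed.
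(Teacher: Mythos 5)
Your proposal is correct and takes essentially the same approach as the paper: existence comes from Lemma \ref{main 3.6}(i) together with $\mathcal{R}(B^{\dag})=\mathcal{R}(B^{*})$, and uniqueness comes from the fact that $B^{\dag}B$ is the projector onto $\mathcal{R}(B^{*})$. The only cosmetic difference is that you cancel via the identity $B^{\dag}BY_{i}=Y_{i}$ directly, whereas the paper phrases the same fact as $\mathcal{R}(Y-\tilde{Y})\subseteq\mathcal{R}(B^{\dag}B)\cap\mathcal{N}(B^{\dag}B)$; your version actually makes explicit the final step ($Y_{1}=Y_{2}$) that the paper leaves implicit.
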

        \begin{proof}
                 Since $Y=B^{\dag}B\tilde{W}X\tilde{W}$,  it implies $\mathcal{R}(Y) \subseteq \mathcal{R}(B^{\dag}) = \mathcal{R}(B^*) = \mathcal{R}(B^{\dag}B)$. If $Y$, $\tilde{Y}$ are two solutions, then using Lemma \ref{main 3.6}, $BY-B\tilde{Y}$=$P_{\mathcal{R}{{((B\tilde{W})^{\tilde{k}}})}_,\mathcal{N}(X\tilde{W})}$-$P_{\mathcal{R}{{((B\tilde{W})^{\tilde{k}}})}_,\mathcal{N}(X\tilde{W})}=\mathbf{O}$. It follows that $\mathcal{R}(Y-\tilde{Y}) \subseteq \mathcal{N}(B)=\mathcal{N}(B^{\dag}B)$. In a similar manner, it follows that $\mathcal{R}(\tilde{Y}) \subseteq \mathcal{R}(B^{\dag}B)$. Therefore, $\mathcal{R}(Y-\tilde{Y}) \subseteq
                 \mathcal{R}(B^{\dag}B) \cap \mathcal{N}(B^{\dag}B)$.
        \end{proof}
        \begin{lemma}\normalfont
            Suppose $B\in\mathbb{C}^{m\times{n}}$ and $\tilde{W}\in\mathbb{C}^{n\times{m}}$ with $\tilde{k}=ind(\tilde{W}B)$. If $Z$ is a minimal rank W-weighted right weak Drazin inverse of $B$, then $Y_1=\tilde{W}Z\tilde{W}BB^{\dag}$ is the unique solution to the system,
        \[
        Y_1B=P_{\mathcal{R}{{(\tilde{W}Z)}}_,\mathcal{N}{((\tilde{W}B)^{\tilde{k}})}}~\mbox{and}~\mathcal{R}(Y_1^*) \subseteq \mathcal{R}{(B)}.
        \]
        \end{lemma}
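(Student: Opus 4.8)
The plan is to establish this lemma as the exact dual of the theorem immediately preceding it, exploiting the symmetry between the W-weighted weak MPD and DMP constructions. Existence splits into two verifications. For the projection identity $Y_1B = P_{\mathcal{R}(\tilde{W}Z),\mathcal{N}((\tilde{W}B)^{\tilde{k}})}$, I would simply invoke Lemma \ref{main1.4}(ii), which already computes $Y_1B$ as precisely this oblique projector. For the range inclusion $\mathcal{R}(Y_1^*)\subseteq\mathcal{R}(B)$, I would observe that since $BB^{\dag}$ is Hermitian (being the orthogonal projection onto $\mathcal{R}(B)$), the adjoint of $Y_1=\tilde{W}Z\tilde{W}BB^{\dag}$ factors as $Y_1^* = BB^{\dag}(\tilde{W}Z\tilde{W})^*$, whence $\mathcal{R}(Y_1^*)\subseteq\mathcal{R}(BB^{\dag})=\mathcal{R}(B)$.

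For uniqueness, suppose $Y_1$ and $\tilde{Y}_1$ both solve the system. The first condition forces $(Y_1-\tilde{Y}_1)B=\mathbf{O}$, since both products equal the same fixed projector; taking adjoints gives $B^*(Y_1-\tilde{Y}_1)^*=\mathbf{O}$, i.e. $\mathcal{R}((Y_1-\tilde{Y}_1)^*)\subseteq\mathcal{N}(B^*)=\mathcal{R}(B)^{\perp}$. The second condition gives $\mathcal{R}(Y_1^*)\subseteq\mathcal{R}(B)$ and $\mathcal{R}(\tilde{Y}_1^*)\subseteq\mathcal{R}(B)$, hence $\mathcal{R}((Y_1-\tilde{Y}_1)^*)\subseteq\mathcal{R}(B)$. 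Intersecting the two inclusions yields $\mathcal{R}((Y_1-\tilde{Y}_1)^*)\subseteq\mathcal{R}(B)\cap\mathcal{R}(B)^{\perp}=\{0\}$, so $(Y_1-\tilde{Y}_1)^*=\mathbf{O}$ and therefore $Y_1=\tilde{Y}_1$.

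The step requiring the most care is the translation of each condition into a statement about the adjoint $Y_1^*$ rather than about $Y_1$ directly: because the fixed data here lives on the row side (the system pins down $Y_1B$ and constrains $\mathcal{R}(Y_1^*)$), the clean orthogonal-complement argument only closes after passing to adjoints, where $\mathcal{N}(B^*)=\mathcal{R}(B)^{\perp}$ supplies the complementary subspace. This is the mirror image of the preceding theorem, in which the conditions pinned down $BY$ and $\mathcal{R}(Y)$ and the argument ran on $Y$ itself using $\mathcal{N}(B)=\mathcal{R}(B^*)^{\perp}$; beyond this transpose bookkeeping I expect no genuinely new obstacle, and the Hermitian property of $BB^{\dag}$ is the only structural fact doing real work.
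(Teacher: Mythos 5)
Your proof is correct and is essentially the paper's own argument: the paper leaves this lemma unproved as the dual of the theorem immediately preceding it, and your proof is precisely the dualization of that theorem's proof — invoke Lemma \ref{main1.4}(ii) for the projector identity, read the inclusion $\mathcal{R}(Y_1^*)\subseteq\mathcal{R}(B)$ off the Hermitian factor $BB^{\dag}$, and get uniqueness by trapping $\mathcal{R}((Y_1-\tilde{Y}_1)^*)$ in two complementary subspaces. The only cosmetic difference is that your uniqueness step passes to adjoints and uses $\mathcal{N}(B^*)=\mathcal{R}(B)^{\perp}$, while the paper's version of the argument uses the complementarity $\mathcal{R}(B^{\dag}B)\cap\mathcal{N}(B^{\dag}B)=\{0\}$ of the projector $B^{\dag}B$; these amount to the same fact.
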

\subsection{Expressions for W-weighted weak MPD and DMP inverses}
In this subsection we obtain the expression of W-weighted weak MPD and DMP expressed through  W-weighted MPD and DMP inverses.
\begin{lemma} \label{Inverse 3.10} \normalfont
    Suppose $B\in\mathbb{C}^{m\times{n}}$, $\tilde{W}\in\mathbb{C}^{n\times{m}}$ and $\tilde{k}=ind(B\tilde{W})$. Let $X$ be the minimal rank W-weighted weak Drazin inverse of $B$. Then
    \[
    B^{\dag}B\tilde{W}X\tilde{W}=B^{\dag,D,W}B\tilde{W}X\tilde{W}.
    \]
\end{lemma}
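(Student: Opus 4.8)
The plan is to expand the right-hand side with the explicit formula $B^{\dag,D,W}=B^{\dag}B\tilde{W}B^{D,W}\tilde{W}$ for the W-weighted MPD inverse, collapse the resulting string of weighted Drazin factors onto the spectral idempotent $(B\tilde{W})^{D}(B\tilde{W})$, and then invoke the invariance of $X$ under that idempotent supplied by Theorem \ref{main1.1}(vii).

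First I would substitute the MPD formula and regroup:
\[
B^{\dag,D,W}B\tilde{W}X\tilde{W}=B^{\dag}B\tilde{W}\bigl(B^{D,W}\tilde{W}B\tilde{W}\bigr)X\tilde{W}.
\]
The heart of the argument is simplifying the central factor $B^{D,W}\tilde{W}B\tilde{W}$. Using the representation $B^{D,W}=[(B\tilde{W})^{D}]^{2}B$ gives $B^{D,W}\tilde{W}=[(B\tilde{W})^{D}]^{2}(B\tilde{W})$, and the Drazin commutation identity $(B\tilde{W})^{D}(B\tilde{W})=(B\tilde{W})(B\tilde{W})^{D}$ together with the outer-inverse identity $(B\tilde{W})^{D}(B\tilde{W})(B\tilde{W})^{D}=(B\tilde{W})^{D}$ reduces this to $B^{D,W}\tilde{W}=(B\tilde{W})^{D}$. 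Hence $B^{D,W}\tilde{W}B\tilde{W}=(B\tilde{W})^{D}(B\tilde{W})$.

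The final step is to note that $X$ is fixed by this idempotent: Theorem \ref{main1.1}(vii) states $X=(B\tilde{W})^{D}(B\tilde{W})X$, so $\bigl(B^{D,W}\tilde{W}B\tilde{W}\bigr)X=(B\tilde{W})^{D}(B\tilde{W})X=X$. Substituting back collapses the displayed product to $B^{\dag}B\tilde{W}X\tilde{W}$, which is exactly the left-hand side. The calculation is otherwise routine; the only step needing care is the reduction $B^{D,W}\tilde{W}B\tilde{W}=(B\tilde{W})^{D}(B\tilde{W})$, where one must apply the Drazin identities in the correct order and resist the tempting but false simplification $(B\tilde{W})(B\tilde{W})^{D}(B\tilde{W})=B\tilde{W}$, valid only when $\tilde{k}\le 1$. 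A clean alternative that sidesteps the intermediate equality is to compute $B^{D,W}\tilde{W}B\tilde{W}X=[(B\tilde{W})^{D}]^{2}(B\tilde{W})^{2}X$ and use idempotency of $(B\tilde{W})^{D}(B\tilde{W})$ directly to obtain $X$.
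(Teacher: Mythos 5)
Your proof is correct, but it takes a genuinely different route from the paper's. The paper works from the left-hand side: it expands $X=(B\tilde{W})^{\tilde{k}}[(B\tilde{W})^{\tilde{k}}]^{\dag}X$ using Theorem \ref{main1.1}(iv), replaces the factor $(B\tilde{W})^{\tilde{k}}$ by $B^{D,W}\tilde{W}(B\tilde{W})^{\tilde{k}+1}$, and then re-absorbs $(B\tilde{W})^{\tilde{k}}[(B\tilde{W})^{\tilde{k}}]^{\dag}X\tilde{W}$ back into $X\tilde{W}$, at which point the prefix $B^{\dag}B\tilde{W}B^{D,W}\tilde{W}$ is recognized as $B^{\dag,D,W}$. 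You instead work from the right-hand side: you expand $B^{\dag,D,W}=B^{\dag}B\tilde{W}B^{D,W}\tilde{W}$, reduce the central factor via $B^{D,W}\tilde{W}=[(B\tilde{W})^{D}]^{2}B\tilde{W}=(B\tilde{W})^{D}$ to the spectral idempotent $(B\tilde{W})^{D}(B\tilde{W})$, and then apply Theorem \ref{main1.1}(vii), i.e.\ $X=(B\tilde{W})^{D}(B\tilde{W})X$. Both arguments rest on the same two underlying facts — the identity $B^{D,W}\tilde{W}=(B\tilde{W})^{D}$ (used implicitly by the paper when it writes $(B\tilde{W})^{\tilde{k}}=B^{D,W}\tilde{W}(B\tilde{W})^{\tilde{k}+1}$) and the invariance of $X$ under a projector onto $\mathcal{R}((B\tilde{W})^{\tilde{k}})$ — but you invoke item (vii) of Theorem \ref{main1.1} where the paper invokes item (iv). Your version is arguably cleaner, since it never touches the Moore-Penrose inverse $[(B\tilde{W})^{\tilde{k}}]^{\dag}$ of a matrix power; the paper's version avoids the explicit reduction $B^{D,W}\tilde{W}B\tilde{W}=(B\tilde{W})^{D}(B\tilde{W})$, which, as you rightly caution, must be done with the Drazin identities in the correct order since $(B\tilde{W})(B\tilde{W})^{D}(B\tilde{W})\neq B\tilde{W}$ when $\tilde{k}>1$.
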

\begin{proof}
    Assume $X$ be the minimal rank W-weighted weak Drazin inverse of $B$. By Theorem \ref{main1.1} (iv), we derive
    \begin{eqnarray*}
        B^{\dag}B\tilde{W}X\tilde{W}&=&B^{\dag}B\tilde{W}(B\tilde{W})^{\tilde{k}}[(B\tilde{W})^{\tilde{k}}]^{\dag}X\tilde{W}\\
        &=&B^{\dag}B\tilde{W}B^{D,\tilde{W}}\tilde{W}(B\tilde{W})^{\tilde{k}+1}[(B\tilde{W})^{\tilde{k}}]^{\dag}X\tilde{W}\\
       &=&B^{\dag}B\tilde{W}B^{D,\tilde{W}}\tilde{W}B\tilde{W}(B\tilde{W})^k[(B\tilde{W})^k]^{\dag}X\tilde{W}\\
        &=&B^{{\dag},D,\tilde{W}}B\tilde{W}X\tilde{W}.
    \end{eqnarray*}
\end{proof}
\begin{lemma} \normalfont
    Consider $B\in\mathbb{C}^{m\times{n}}$ and $\tilde{W}\in\mathbb{C}^{n\times{m}}$ such that $\tilde{k}=ind(\tilde{W}B)$. If $Z$ is a minimal rank W-weighted right weak Drazin inverse of $B$, then
    \begin{center}
    $\tilde{W}Z\tilde{W}BB^{\dag}=\tilde{W}Z\tilde{W}BB^{D,{\dag},W}.$
    \end{center}
\end{lemma}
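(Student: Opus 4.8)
This statement is the exact right-handed dual of Lemma~\ref{Inverse 3.10}, so the plan is to mirror that argument with the roles of left and right multiplication interchanged and with $\tilde{W}B$ (rather than $B\tilde{W}$) as the governing square matrix, since here $\tilde{k}=ind(\tilde{W}B)$. First I would unfold the target: recalling that $B^{D,\dag,W}=\tilde{W}B^{D,W}\tilde{W}BB^{\dag}$, the right-hand side is $\tilde{W}Z\tilde{W}B\tilde{W}B^{D,W}\tilde{W}BB^{\dag}$, so the claim reduces to the identity $\tilde{W}Z\tilde{W}BB^{\dag}=\tilde{W}Z\tilde{W}B\tilde{W}B^{D,W}\tilde{W}BB^{\dag}$.

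The engine of the proof is Theorem~\ref{main1.2}(iv), which supplies the right-handed absorption property $Z=Z[(\tilde{W}B)^{\tilde{k}}]^{\dag}(\tilde{W}B)^{\tilde{k}}$, playing the role that Theorem~\ref{main1.1}(iv) played in Lemma~\ref{Inverse 3.10}. The second ingredient is the right-handed weighted-Drazin collapse $(\tilde{W}B)^{\tilde{k}+1}\tilde{W}B^{D,W}=(\tilde{W}B)^{\tilde{k}}$. I would start from the right-hand side, insert the factorization of $Z$ to obtain $\tilde{W}Z[(\tilde{W}B)^{\tilde{k}}]^{\dag}(\tilde{W}B)^{\tilde{k}}\tilde{W}B\tilde{W}B^{D,W}\tilde{W}BB^{\dag}$, then propagate the powers: the relation $(\tilde{W}B)^{\tilde{k}}\tilde{W}B=(\tilde{W}B)^{\tilde{k}+1}$ turns the block $(\tilde{W}B)^{\tilde{k}}\tilde{W}B\tilde{W}B^{D,W}$ into $(\tilde{W}B)^{\tilde{k}+1}\tilde{W}B^{D,W}=(\tilde{W}B)^{\tilde{k}}$, after which re-absorbing $Z[(\tilde{W}B)^{\tilde{k}}]^{\dag}(\tilde{W}B)^{\tilde{k}}=Z$ lands exactly on $\tilde{W}Z\tilde{W}BB^{\dag}$.

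The main obstacle, and really the only nonroutine point, is establishing the dual collapse identity $(\tilde{W}B)^{\tilde{k}+1}\tilde{W}B^{D,W}=(\tilde{W}B)^{\tilde{k}}$, since the paper records only the left-handed defining relation $(B\tilde{W})^{\tilde{k}+1}B^{D,W}\tilde{W}=(B\tilde{W})^{\tilde{k}}$. I would derive it from the representation $B^{D,W}=B[(\tilde{W}B)^D]^2$: setting $N=\tilde{W}B$ gives $\tilde{W}B^{D,W}=N(N^D)^2=N^D$, using $N^DN=NN^D$ together with the Drazin identity $N^DNN^D=N^D$, hence $\tilde{W}B^{D,W}=(\tilde{W}B)^D$. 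Then $(\tilde{W}B)^{\tilde{k}+1}\tilde{W}B^{D,W}=N^{\tilde{k}+1}N^D=N^{\tilde{k}}=(\tilde{W}B)^{\tilde{k}}$ by the Drazin power relation $N^{\tilde{k}+1}N^D=N^{\tilde{k}}$ valid for $\tilde{k}\ge ind(N)$. With this identity secured, the remaining steps are purely formal rewrites, exactly parallel to the proof of Lemma~\ref{Inverse 3.10}.
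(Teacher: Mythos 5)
Your proposal is correct and follows essentially the same route the paper intends: the paper's proof of this lemma is simply the remark that it is ``similar to Lemma~\ref{Inverse 3.10}'', i.e.\ the right-handed dualization you carried out, using Theorem~\ref{main1.2}(iv) for the absorption $Z=Z[(\tilde{W}B)^{\tilde{k}}]^{\dag}(\tilde{W}B)^{\tilde{k}}$ and the collapse identity $(\tilde{W}B)^{\tilde{k}+1}\tilde{W}B^{D,W}=(\tilde{W}B)^{\tilde{k}}$, exactly mirroring the computation in Lemma~\ref{Inverse 3.10}. Your explicit derivation of $\tilde{W}B^{D,W}=(\tilde{W}B)^D$ from $B^{D,W}=B[(\tilde{W}B)^D]^2$ is a detail the paper leaves implicit, and it is a correct and welcome addition.
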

\begin{proof}
    This proof is similar to Lemma \ref{Inverse 3.10}.
\end{proof}
By considering $X$ as a minimal rank W-weighted weak Drazin inverse of $B$, we obtain some general forms of W-weighted weak MPD inverse by using $\{1\}$- inverse of $B$.
\begin{theorem}\normalfont
    Suppose $B\in\mathbb{C}^{m\times{n}}$, $\tilde{W}\in\mathbb{C}^{n\times{m}}$ with $\tilde{k}=ind(B\tilde{W})$ and $Q\in B\{1\}$. If $X$ is a minimal rank W-weighted weak Drazin inverse of $B$. The following conditions are equivalent:
    \end{theorem}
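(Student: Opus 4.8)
The plan is to prove the listed equivalences through a cycle of implications, in exactly the spirit of Theorem \ref{Inverse 3.3}, but with the single inner-inverse relation $BQB=B$ playing the role that the full Penrose relations for $B^{\dag}$ played there. The backbone of every implication will be the three identities attached to a minimal rank $W$-weighted weak Drazin inverse supplied by Theorem \ref{main1.1}: the range-absorption relation $X=(B\tilde{W})^{\tilde{k}}[(B\tilde{W})^{\tilde{k}}]^{\dag}X$, the weight-shift relation $X\tilde{W}(B\tilde{W})^{\tilde{k}+1}=(B\tilde{W})^{\tilde{k}}$, and the outer-inverse relation $X\tilde{W}B\tilde{W}X=X$. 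These three are available regardless of which inner inverse $Q$ we use, so the strategy is to arrange each listed condition so that $Q$ enters only where $BQB=B$ can discharge it.

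First I would verify that the natural candidate $Y=QB\tilde{W}X\tilde{W}$ satisfies the governing equations. The computation $BY=BQB\tilde{W}X\tilde{W}=B\tilde{W}X\tilde{W}$ is immediate from $BQB=B$, and is the one place where the inner-inverse hypothesis is genuinely used. The idempotency $YBY=Y$ then follows by inserting $BY=B\tilde{W}X\tilde{W}$ and collapsing $B\tilde{W}X\tilde{W}B\tilde{W}X\tilde{W}=B\tilde{W}(X\tilde{W}B\tilde{W}X)\tilde{W}=B\tilde{W}X\tilde{W}$ via the outer-inverse identity. The weight-shift relation $Y(B\tilde{W})^{\tilde{k}+1}=QB\tilde{W}(B\tilde{W})^{\tilde{k}}=Q(B\tilde{W})^{\tilde{k}+1}$ is handled by the second defining identity of $X$.

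To move between the equational forms and any projector or range/null-space forms, I would lean on the telescoping manoeuvre that recurs throughout the paper: for an admissible trailing factor $M$,
\[
YB\tilde{W}X\tilde{W}M=Y(B\tilde{W})^{\tilde{k}+1}[(B\tilde{W})^{\tilde{k}}]^{\dag}X\tilde{W}M,
\]
after which the weight-shift relation replaces $Y(B\tilde{W})^{\tilde{k}+1}$ by $Q(B\tilde{W})^{\tilde{k}+1}$ and the range-absorption identity folds the product back to $QB\tilde{W}X\tilde{W}M$. This single identity drives the forward direction of each equivalence, precisely as in Theorem \ref{Inverse 3.3}; the reverse directions close by substituting the assumed equations into $Y=YBY$ (or into $Y=QBY$ when that idempotent is among the hypotheses).

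The main obstacle will be that $Q$ is only a $\{1\}$-inverse, so $BQ$ and $QB$ are merely oblique idempotents rather than the orthogonal projectors $BB^{\dag}$ and $B^{\dag}B$. Consequently every step in the $B^{\dag}$-theory that silently invoked a Penrose symmetry, such as passing through $\mathcal{R}(B^{\dag})=\mathcal{R}(B^{*})$ or the Hermitian relation $(B^{\dag}B)^{*}=B^{\dag}B$, is unavailable and must be rerouted so that each cancellation rests solely on $BQB=B$ together with the three $X$-identities. The delicate point is therefore to confirm that each listed condition is phrased with $Q$ (and its induced idempotents $QB$, $BQ$) in exactly the positions where $BQB=B$ applies; once the conditions are so aligned, the cycle of implications closes by the same telescoping computation, with no appeal to orthogonality at any stage.
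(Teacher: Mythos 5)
There is a genuine gap: your proposal proves a different statement from the one in the paper. You guessed that the theorem is an analogue of Theorem \ref{Inverse 3.3} in which an arbitrary inner inverse $Q$ \emph{replaces} $B^{\dag}$ throughout, and accordingly you set out to characterize $Y=QB\tilde{W}X\tilde{W}$ by equations such as $YBY=Y$, $BY=B\tilde{W}X\tilde{W}$, and $Y(B\tilde{W})^{\tilde{k}+1}=Q(B\tilde{W})^{\tilde{k}+1}$. But in the paper's theorem $B^{\dag}$ never disappears: every listed condition \emph{compares} $Q$ with $B^{\dag}$. The actual equivalent conditions are (i) $B^{\dag}B\tilde{W}X\tilde{W}=QB\tilde{W}X\tilde{W}$, (ii) $Q(B\tilde{W})^{\tilde{k}+1}=B^{\dag}(B\tilde{W})^{\tilde{k}+1}$, (iii)--(iv) equality resp.\ inclusion of the ranges $\mathcal{R}(Q(B\tilde{W})^{\tilde{k}+1})$ and $\mathcal{R}(B^{\dag}(B\tilde{W})^{\tilde{k}+1})$, and (v) the parametrization $Q=B^{\dag}+U\bigl(I-(B\tilde{W})^{\tilde{k}}[(B\tilde{W})^{\tilde{k}}]^{\dag}\bigr)$ with $U$ arbitrary. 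In other words, the theorem identifies exactly which $\{1\}$-inverses $Q$ reproduce the W-weighted weak MPD inverse $B^{\dag}B\tilde{W}X\tilde{W}$; it is not a statement about the properties of $QB\tilde{W}X\tilde{W}$ for general $Q$ (and indeed for general $Q$ that matrix need not equal the weak MPD inverse at all, which is the whole point). Your concluding remark that the ``delicate point'' is to check that the conditions are phrased so that $BQB=B$ applies is a symptom of this: since you did not have the conditions, the plan never engages with what actually must be proved.

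Concretely, two pieces of the paper's argument have no counterpart in your proposal and are not recoverable from your telescoping identity alone. First, the implication (iv)$\Rightarrow$(i): from the mere inclusion $\mathcal{R}(Q(B\tilde{W})^{\tilde{k}+1})\subseteq\mathcal{R}(B^{\dag}(B\tilde{W})^{\tilde{k}+1})$ one writes $Q(B\tilde{W})^{\tilde{k}+1}=B^{\dag}(B\tilde{W})^{\tilde{k}+1}V$, then absorbs via $B^{\dag}BB^{\dag}=B^{\dag}$ to get $Q(B\tilde{W})^{\tilde{k}+1}=B^{\dag}BQ(B\tilde{W})^{\tilde{k}+1}=B^{\dag}BQB\tilde{W}(B\tilde{W})^{\tilde{k}}=B^{\dag}(B\tilde{W})^{\tilde{k}+1}$, using $BQB=B$; this upgrade from a range inclusion to an exact equality is the crux and requires Penrose identities for $B^{\dag}$, which you explicitly planned to avoid. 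Second, the equivalence with (v) is a statement about the \emph{general solution} of the linear matrix equation $Q(B\tilde{W})^{\tilde{k}+1}=B^{\dag}(B\tilde{W})^{\tilde{k}+1}$: one exhibits $B^{\dag}$ as a particular solution and parametrizes the homogeneous solutions by $U\bigl(I-(B\tilde{W})^{\tilde{k}}[(B\tilde{W})^{\tilde{k}}]^{\dag}\bigr)$, the converse following because $\bigl(I-(B\tilde{W})^{\tilde{k}}[(B\tilde{W})^{\tilde{k}}]^{\dag}\bigr)(B\tilde{W})^{\tilde{k}+1}=\mathbf{O}$. Nothing in your cycle-of-implications framework produces such a parametrization. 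Your telescoping manoeuvre would indeed carry the (i)$\Leftrightarrow$(ii) step (it is essentially what the paper does there), but as it stands the proposal addresses the wrong theorem and omits the two implications that carry the theorem's real content.
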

    \begin{enumerate}
        \item[(i)] $B^{\dag}B\tilde{W}X\tilde{W}=QB\tilde{W}X\tilde{W}$ ;
        \item[(ii)]$Q(B\tilde{W})^{\tilde{k}+1}=B^{\dag}(B\tilde{W})^{\tilde{k}+1}$ ;
        \item [(iii)]$\mathcal{R}(Q(B\tilde{W})^{\tilde{k}+1})=\mathcal{R}(B^{\dag}(B\tilde{W})^{\tilde{k}+1})$ ;
        \item[(iv)] $\mathcal{R}(Q(B\tilde{W})^{\tilde{k}+1})\subseteq \mathcal{R}(B^{\dag}(B\tilde{W})^{\tilde{k}+1})$ ;
        \item[(v)] $Q=B^{\dag}+U(I-(B\tilde{W})^{\tilde{k}}[(B\tilde{W})^{\tilde{k}}]^{\dag})$, for arbitrary $U\in\mathbb{C}^{n\times{m}}$.
        \end{enumerate}
         \begin{proof}
         \begin{enumerate}
             \item[(i)]$\Rightarrow$(ii): From the given condition $B^{\dag}B\tilde{W}X\tilde{W}=QB\tilde{W}X\tilde{W}$, we obtain $Q(B\tilde{W})^{\tilde{k}+1}=QB\tilde{W}(B\tilde{W})^{\tilde{k}}=QB\tilde{W}X\tilde{W}(B\tilde{W})^{\tilde{k}+1}=B^{\dag}B\tilde{W}X\tilde{W}(B\tilde{W})^{\tilde{k}+1}=B^{\dag}B\tilde{W}(B\tilde{W})^{\tilde{k}}=B^{\dag}(B\tilde{W})^{\tilde{k}+1}$.   
             \item [(ii)]$\Rightarrow$ (i): From the expression $Q(B\tilde{W})^{\tilde{k}+1}=B^{\dag}(B\tilde{W})^{\tilde{k}+1}$ and using Theorem \ref{main1.1} (iv), we derive $B^{\dag}B\tilde{W}X\tilde{W}=B^{\dag}B\tilde{W}(B\tilde{W})^{\tilde{k}}[(B\tilde{W})^{\tilde{k}}]^{\dag}X{\tilde{W}}$ = $B^{\dag}(B\tilde{W})^{\tilde{k}+1}[(B\tilde{W})^{\tilde{k}}]^{\dag}X{\tilde{W}}$\\ = $Q(B\tilde{W})(B\tilde{W})^{\tilde{k}}[(B\tilde{W})^{\tilde{k}}]^{\dag}X{\tilde{W}}=QB\tilde{W}X\tilde{W}$.
             \item [(iv)]$\Rightarrow$(i) : Using $\mathcal{R}(Q(B\tilde{W})^{\tilde{k}+1})\subseteq \mathcal{R}(B^{\dag}(B\tilde{W})^{\tilde{k}+1})$, we have\\$Q(B\tilde{W})^{\tilde{k}+1}=B^{\dag}(B\tilde{W})^{\tilde{k}+1}V,~\mbox{for~some} ~V\in\mathbb{C}^{m\times{n}}$. Now, $Q(B\tilde{W})^{\tilde{k}+1}=B^{\dag}(B\tilde{W})^{\tilde{k}+1}V=B^{\dag}BB^{\dag}(B\tilde{W})^{\tilde{k}+1}V=B^{\dag}BQ(B\tilde{W})^{\tilde{k}+1}=B^{\dag}BQB\tilde{W}(B\tilde{W})^{\tilde{k}}=B^{\dag}B\tilde{W}(B\tilde{W})^{\tilde{k}}=B^{\dag}(B\tilde{W})^{\tilde{k}+1}$. Moreover, \begin{eqnarray*}
                 B^{\dag}B\tilde{W}X\tilde{W}&=&B^{\dag}B\tilde{W} (B\tilde{W})^{\tilde{k}}[(B\tilde{W})^{\tilde{k}}]^{\dag}X\tilde{W}~~~~[\textnormal{Theorem}~\ref{main1.1}~ \textnormal{(iv)}]\\
                 &=&B^{\dag}(B\tilde{W})^{\tilde{k}+1}[(B\tilde{W})^{\tilde{k}}]^{\dag}X\tilde{W}\\
                 &=&Q(B\tilde{W})^{\tilde{k}+1}[(B\tilde{W})^{\tilde{k}}]^{\dag}X\tilde{W}\\
                 &=&QB\tilde{W}(B\tilde{W})^{\tilde{k}}[(B\tilde{W})^{\tilde{k}}]^{\dag}X\tilde{W}\\
                 &=&QB\tilde{W}X\tilde{W}.
                 \end{eqnarray*}
          \item [(ii)]$\Rightarrow$ (v): Suppose $Q$ satisfies the equation, $Q(B\tilde{W})^{\tilde{k}+1}=E~,~E\in\mathbb{C}^{n\times {m}}$ and by using the condition (i) we get $Q(B\tilde{W})^{\tilde{k}+1}\Leftrightarrow QB\tilde{W}X\tilde{W}(B\tilde{W})^{\tilde{k}+1} \Leftrightarrow B^{\dag}B\tilde{W}X\tilde{W}(B\tilde{W})^{\tilde{k}+1}\\ \Leftrightarrow B^{\dag}B\tilde{W}(B\tilde{W})^{\tilde{k}} \Leftrightarrow B^{\dag}(B\tilde{W})^{\tilde{k}+1}$. Thus, we conclude that a particular solution of the matrix equation is $B^{\dag}$. Solving the associated homogeneous equation $Q(B\tilde{W})^{\tilde{k}+1}=\mathbf{O}$, we find that the general solution of 
$Q(B\tilde{W})^{\tilde{k}+1}=B^{\dag}(B\tilde{W})^{\tilde{k}+1}$
is given by $Q = B^{\dag} + U\bigl(I - (B\tilde{W})^{\tilde{k}}[(B\tilde{W})^{\tilde{k}}]^{\dag}\bigr),$
where $U$ is an arbitrary matrix in $\mathbb{C}^{n\times m}$.
           \item [(v)]$\Rightarrow$(ii): This follows directly by post-multiplying $(B\tilde{W})^{\tilde{k}+1}$ to $Q=B^{\dag}+U(I-(B\tilde{W})^{\tilde{k}}[(B\tilde{W})^{\tilde{k}}]^{\dag})$.
          
         \end{enumerate}
    \end{proof}
    We derive the general form of the W-weighted weak DMP inverse, assuming that $Z$ is the minimal rank W-weighted right weak Drazin inverse by using $\{1\}$- inverse of $A$.
    \begin{lemma} \normalfont
        Let $B\in\mathbb{C}^{m\times {n}}$, $\tilde{W}\in\mathbb{C}^{n\times{m}}$ with $\tilde{k}=ind(\tilde{W}B)$, $Q\in B\{1\}$ and $Z$ be a minimal rank W-weighted right weak Drazin inverse of $B$. The subsequent conditions are equivalent:
        \begin{enumerate}
            \item [(i)] $\tilde{W}Z\tilde{W}BB^{\dag}=\tilde{W}Z\tilde{W}BQ$ ;
            \item [(ii)] $(\tilde{W}B)^{\tilde{k}+1}Q=(\tilde{W}B)^{\tilde{k}+1}B^{\dag}$ ;
            \item [(iii)] $\mathcal{N}((\tilde{W}B)^{\tilde{k}+1}Q)=\mathcal{N}((\tilde{W}B)^{\tilde{k}+1})B^{\dag})$ ;
            \item [(iv)] $\mathcal{N}((\tilde{W}B)^{\tilde{k}+1}Q) \subseteq \mathcal{N}((\tilde{W}B)^{\tilde{k}+1}B^{\dag})$ ;
            \item [(v)] $Q=B^{\dag}+(I-[(\tilde{W}B)^{\tilde{k}}]^{\dag}(\tilde{W}B)^{\tilde{k}})U$, for arbitrary $U\in\mathbb{C}^{n\times{m}}$ . 
        \end{enumerate}
    \end{lemma}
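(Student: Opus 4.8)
The plan is to mirror the proof of the preceding theorem for the W-weighted weak MPD inverse, replacing range-space arguments by null-space ones and the defining relation $X\tilde{W}(B\tilde{W})^{\tilde{k}+1}=(B\tilde{W})^{\tilde{k}}$ by its right-handed counterpart. The key algebraic fact is that the defining equation $\tilde{W}(B\tilde{W})^{\tilde{k}+1}Z=(\tilde{W}B)^{\tilde{k}}$ of the minimal rank W-weighted right weak Drazin inverse can be rewritten as $(\tilde{W}B)^{\tilde{k}+1}\tilde{W}Z=(\tilde{W}B)^{\tilde{k}}$, since $\tilde{W}(B\tilde{W})^{\tilde{k}+1}=(\tilde{W}B)^{\tilde{k}+1}\tilde{W}$. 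Together with the idempotent identity $Z=Z[(\tilde{W}B)^{\tilde{k}}]^{\dag}(\tilde{W}B)^{\tilde{k}}$ from Theorem \ref{main1.2}(iv), these are the only properties of $Z$ that I will need.

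First I would establish (i)$\Leftrightarrow$(ii). For (i)$\Rightarrow$(ii), starting from $(\tilde{W}B)^{\tilde{k}+1}Q=(\tilde{W}B)^{\tilde{k}}\tilde{W}BQ$ I substitute $(\tilde{W}B)^{\tilde{k}}=(\tilde{W}B)^{\tilde{k}+1}\tilde{W}Z$, apply the hypothesis $\tilde{W}Z\tilde{W}BQ=\tilde{W}Z\tilde{W}BB^{\dag}$, and collapse again by the defining relation to reach $(\tilde{W}B)^{\tilde{k}+1}B^{\dag}$. For (ii)$\Rightarrow$(i) I expand $\tilde{W}Z\tilde{W}BQ=\tilde{W}Z[(\tilde{W}B)^{\tilde{k}}]^{\dag}(\tilde{W}B)^{\tilde{k}+1}Q$ via the idempotent identity, insert (ii), and fold back. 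Neither direction needs $BQB=B$; the hypothesis $Q\in B\{1\}$ only enters later.

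The implications (ii)$\Rightarrow$(iii)$\Rightarrow$(iv) are immediate, since equality of the matrices $(\tilde{W}B)^{\tilde{k}+1}Q$ and $(\tilde{W}B)^{\tilde{k}+1}B^{\dag}$ forces equality, hence inclusion, of their null spaces. The step I expect to be the main obstacle is (iv)$\Rightarrow$(ii), where only the one-sided inclusion $\mathcal{N}((\tilde{W}B)^{\tilde{k}+1}Q)\subseteq\mathcal{N}((\tilde{W}B)^{\tilde{k}+1}B^{\dag})$ is available. Here I would invoke the standard fact that $\mathcal{N}(A)\subseteq\mathcal{N}(C)$ is equivalent to $C=VA$ for some matrix $V$, which turns (iv) into $(\tilde{W}B)^{\tilde{k}+1}B^{\dag}=V(\tilde{W}B)^{\tilde{k}+1}Q$. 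The difficulty is that the unknown $V$ sits on the left and must be removed. To do this I right-multiply by $B$: the left-hand side becomes $(\tilde{W}B)^{\tilde{k}+1}$ because $BB^{\dag}B=B$, while the right-hand side reduces to $V(\tilde{W}B)^{\tilde{k}+1}$ after using $BQB=B$ (the one place where $Q\in B\{1\}$ is essential). This gives $V(\tilde{W}B)^{\tilde{k}+1}=(\tilde{W}B)^{\tilde{k}+1}$, and feeding it back into the factorization yields $(\tilde{W}B)^{\tilde{k}+1}B^{\dag}=(\tilde{W}B)^{\tilde{k}+1}Q$, which is exactly (ii).

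Finally, for (ii)$\Leftrightarrow$(v) I would read (ii) as the linear matrix equation $(\tilde{W}B)^{\tilde{k}+1}Q=(\tilde{W}B)^{\tilde{k}+1}B^{\dag}$ in the unknown $Q$. A particular solution is $Q=B^{\dag}$, and the homogeneous equation $(\tilde{W}B)^{\tilde{k}+1}Q_0=\mathbf{O}$ has general solution $Q_0=(I-[(\tilde{W}B)^{\tilde{k}}]^{\dag}(\tilde{W}B)^{\tilde{k}})U$ for arbitrary $U$; here I use that $I-[(\tilde{W}B)^{\tilde{k}}]^{\dag}(\tilde{W}B)^{\tilde{k}}$ is the orthogonal projector onto $\mathcal{N}((\tilde{W}B)^{\tilde{k}})=\mathcal{N}((\tilde{W}B)^{\tilde{k}+1})$, the equality of null spaces being precisely $\tilde{k}=ind(\tilde{W}B)$, so that $(\tilde{W}B)^{\tilde{k}+1}(I-[(\tilde{W}B)^{\tilde{k}}]^{\dag}(\tilde{W}B)^{\tilde{k}})=\mathbf{O}$. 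The converse (v)$\Rightarrow$(ii) is then a one-line check obtained by pre-multiplying the displayed form of $Q$ by $(\tilde{W}B)^{\tilde{k}+1}$.
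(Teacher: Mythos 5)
Your proof is correct, and it is essentially the intended argument: the paper states this lemma without proof as the dual of the preceding theorem (the MPD-side result with $Q\in B\{1\}$), whose proof you have dualized faithfully — Theorem \ref{main1.2}(iv) replacing Theorem \ref{main1.1}(iv), the null-space factorization $\mathcal{N}(A)\subseteq\mathcal{N}(C)\Leftrightarrow C=VA$ replacing the range factorization, and the particular-plus-homogeneous decomposition for (v) carried over verbatim. Your handling of (iv)$\Rightarrow$(ii) — right-multiplying by $B$ to eliminate $V$, which is where $BQB=B$ enters — is a minor mechanical variant of the paper's insertion of $B^{\dag}BB^{\dag}$ and back-substitution, forced by the fact that the trailing factor on the dual side is $Q$ rather than $B^{\dag}$, but it is the same approach in substance.
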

We investigate the general solution of the W-weighted weak MPD inverse.
    \begin{lemma}\normalfont
        Suppose $B\in\mathbb{C}^{m\times{n}}$ and $\tilde{W}\in\mathbb{C}^{n\times{m}}$ with $\tilde{k}=ind(B\tilde{W})$. Assume $X$ is the minimal rank W-weighted weak Drazin inverse of $B$. Then the general solution of the equation $Y(B\tilde{W})^{\tilde{k}+1}=B^{\dag}(B\tilde{W})^{\tilde{k}+1}$ is \[Y=B^{\dag}+Z(I-B\tilde{W}X\tilde{W}),\]
        where $Z\in\mathbb{C}^{n\times{m}}$ is arbitrary.
    \end{lemma}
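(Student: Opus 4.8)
The plan is to treat $Y(B\tilde W)^{\tilde k+1}=B^\dag(B\tilde W)^{\tilde k+1}$ as an inhomogeneous linear matrix equation in the unknown $Y$ and to describe its solution set as a particular solution plus the general solution of the associated homogeneous equation. A particular solution is immediate: $Y=B^\dag$ obviously satisfies the equation, so every solution has the form $Y=B^\dag+Y_0$, where $Y_0$ ranges over all matrices with $Y_0(B\tilde W)^{\tilde k+1}=\mathbf O$. It therefore suffices to show that the homogeneous solution set is exactly $\{\,Z(I-B\tilde W X\tilde W):Z\in\mathbb C^{n\times m}\,\}$.

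First I would record the structure of $P:=B\tilde W X\tilde W$. By Theorem~\ref{Inverse} we have $P=BY$ for $Y=B^\dag B\tilde W X\tilde W$, and Lemma~\ref{main 3.6}(i) identifies $P$ with the oblique projector onto $\mathcal R((B\tilde W)^{\tilde k})$ along $\mathcal N(X\tilde W)$; in particular $P^2=P$ and $\mathcal R(P)=\mathcal R((B\tilde W)^{\tilde k})$. (Idempotency can also be checked directly from $X\tilde W B\tilde W X=X$ in Theorem~\ref{main1.1}(iii).) Since $\tilde k=ind(B\tilde W)$, the range of $(B\tilde W)^{\tilde k}$ stabilizes, so $\mathcal R((B\tilde W)^{\tilde k+1})=\mathcal R((B\tilde W)^{\tilde k})=\mathcal R(P)$.

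The key step is then the equivalence $Y(B\tilde W)^{\tilde k+1}=\mathbf O\iff YP=\mathbf O$. This holds because $Y(B\tilde W)^{\tilde k+1}=\mathbf O$ is equivalent to $\mathcal R((B\tilde W)^{\tilde k+1})\subseteq\mathcal N(Y)$, and the two column spaces $\mathcal R((B\tilde W)^{\tilde k+1})$ and $\mathcal R(P)$ coincide. Given $YP=\mathbf O$ and $P^2=P$, we get $Y=Y-YP=Y(I-P)$, so $Y=Z(I-P)$ with $Z=Y$; conversely $Z(I-P)P=Z(P-P^2)=\mathbf O$ shows every $Z(I-P)$ is a homogeneous solution. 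Hence the homogeneous solution set is precisely $\{Z(I-B\tilde W X\tilde W)\}$, and combining with the particular solution $B^\dag$ yields $Y=B^\dag+Z(I-B\tilde W X\tilde W)$ with $Z$ arbitrary.

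I expect the only real obstacle to be the second step: pinning down that $B\tilde W X\tilde W$ is idempotent with column space exactly $\mathcal R((B\tilde W)^{\tilde k+1})$. Everything afterwards is the standard particular-plus-homogeneous bookkeeping. The needed facts, namely $X\tilde W(B\tilde W)^{\tilde k+1}=(B\tilde W)^{\tilde k}$, $X\tilde W B\tilde W X=X$, and $\mathcal R(X)=\mathcal R((B\tilde W)^{\tilde k})$, are all supplied by Theorem~\ref{main1.1}, and the projector description by Lemma~\ref{main 3.6}(i), so no new machinery is required.
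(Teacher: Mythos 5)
Your proof is correct, and it fills a genuine gap: the paper states this lemma without any proof. The closest model inside the paper is its proof of the general form of $\{1\}$-inverses, $Q=B^{\dag}+U(I-(B\tilde{W})^{\tilde{k}}[(B\tilde{W})^{\tilde{k}}]^{\dag})$, which follows the same particular-plus-homogeneous template you use but parametrizes the homogeneous solutions by the orthogonal projector onto $\mathcal{R}((B\tilde{W})^{\tilde{k}})$ (and even there the paper only sketches the homogeneous step); the present lemma replaces that projector by the oblique idempotent $P=B\tilde{W}X\tilde{W}$, and your argument supplies exactly the extra content this substitution requires, namely $P^2=P$ (from $X\tilde{W}B\tilde{W}X=X$, Theorem~\ref{main1.1}) and $\mathcal{R}(P)=\mathcal{R}((B\tilde{W})^{\tilde{k}})=\mathcal{R}((B\tilde{W})^{\tilde{k}+1})$ (Lemma~\ref{main 3.6}(i) combined with the index condition), whence $Y_0(B\tilde{W})^{\tilde{k}+1}=\mathbf{O}$ iff $Y_0P=\mathbf{O}$ iff $Y_0=Y_0(I-P)$. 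A minor streamlining, should you wish to avoid the appeal to range stabilization: by Theorem~\ref{main1.1}(iv) one has $P=B\tilde{W}(B\tilde{W})^{\tilde{k}}[(B\tilde{W})^{\tilde{k}}]^{\dag}X\tilde{W}=(B\tilde{W})^{\tilde{k}+1}[(B\tilde{W})^{\tilde{k}}]^{\dag}X\tilde{W}$, so $Y_0(B\tilde{W})^{\tilde{k}+1}=\mathbf{O}$ yields $Y_0P=\mathbf{O}$ immediately, while $P(B\tilde{W})^{\tilde{k}+1}=B\tilde{W}\bigl(X\tilde{W}(B\tilde{W})^{\tilde{k}+1}\bigr)=B\tilde{W}(B\tilde{W})^{\tilde{k}}=(B\tilde{W})^{\tilde{k}+1}$ yields the converse; the rest of your bookkeeping then goes through unchanged.
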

    We can express the canonical form of W-weighted weak MPD by using decomposition of $A$ and $\tilde{W}$ which is presented in \cite{Ferreyra6}.
\begin{lemma} \label{Inverse 3.15} \normalfont
Suppose $B\in\mathbb{C}^{m\times{n}}$ and $\tilde{W}\in\mathbb{C}^{n\times{m}}$ such that $\tilde{k}=\max\{ind(B\tilde{W}),ind(\tilde{W}B)\}$ and $rank(B\tilde{W})^{\tilde{k}}=q$, we have
    \begin{center}
          $B=M
\begin{bmatrix}
B_1 & B_2 \\
0 & B_3  \\
\end{bmatrix}N^*$ and
$\tilde{W}=N
\begin{bmatrix}
\tilde{W_1} & \tilde{W_2} \\
0 & \tilde{W_3}  \\
\end{bmatrix}M^*,$
    \end{center}
    where $M\in\mathbb{C}^{m\times{m}}$,  $N\in\mathbb{C}^{n\times{n}}$ are unitary and $B_1,\tilde{W_1}\in\mathbb{C}^{q\times{q}}$ are nonsingular. The matrices $B_2\in\mathbb{C}^{q \times(m-q)}$, $B_3\in\mathbb{C}^{(m-q) \times(n-q)}$ and $\tilde{W}_3\in\mathbb{C}^{(n-q)\times(m-q)}$ such that $B_3\tilde{W}_3$ and $\tilde{W}_3B_3$ are nilpotent of indices $ind(B\tilde{W})~\mbox{and}~ ind(\tilde{W}B)$, respectively.
    \begin{theorem} \normalfont
       Suppose $X$ is a minimal rank W-weighted weak Drazin inverse of $B$ which is decomposed as
       \begin{center}
       $X=M
           \begin{bmatrix}
             (\tilde{W_1}B_1\tilde{W_1})^{-1} & X_2 \\
             0 & 0  \\  
           \end{bmatrix}N^*$ \cite{mos1}.
       \end{center} 
          \begin{eqnarray*}
             \text{Then, } Y&=&B^{\dag}B\tilde{W}X\tilde{W}\\&=&N\begin{bmatrix}
               B_1^*\Delta Q & B_1^*D+ B_1^*\Delta B_1\tilde{W_1}X_3\tilde{W_3}\\
               (I-B_3^{\dag}B_3)B_2^*\Delta Q & (I-B_3^{\dag}B_3)B_2^*D+(I-B_3^{\dag}B_3)B_2^*\Delta B_1\tilde{W_1}X_3\tilde{W_3}
           \end{bmatrix}M^*,
          \end{eqnarray*}
       where $X_2$ is arbitrary, $Q=B_1\tilde{W_1}(\tilde{W_1}B_1\tilde{W_1})^{-1}\tilde{W_1}$, $\Delta=(B_1B_1^*+B_2(I-B_3^{\dag}B_3)B_2^*)^{-1}$ and $D=\Delta B_1\tilde{W_1}(\tilde{W_1}B_1\tilde{W_1})^{-1}\tilde{W_2}$.
    \end{theorem}
    \begin{proof}
        By Lemma \ref{Inverse 3.15}, we have
        \begin{center}
          $B=M
\begin{bmatrix}
B_1 & B_2 \\
0 & B_3  \\
\end{bmatrix}N^*$,
$\tilde{W}=N
\begin{bmatrix}
\tilde{W_1} & \tilde{W_2} \\
0 & \tilde{W_3}  \\
\end{bmatrix}M^*$, 
$X=M
           \begin{bmatrix}
             (\tilde{W_1}B_1\tilde{W_1})^{-1} & X_2\\
             0 & 0  \\  
           \end{bmatrix}N^*.$
    \end{center} However,
    $B^{\dag}=N
    \begin{bmatrix}
        B_1^*\Delta & -B_1^*\Delta B_2B_3^{\dag}\\
        (I-B_3^{\dag}B_3)B_2^*\Delta & B_3^{\dag}
        (I-B_3^{\dag}B_3)B_2^*\Delta B_2B_3^{\dag}\\
    \end{bmatrix}M^*$ using Lemma 6 of \cite{Deng18}. Upon substitution of $B^{\dag}$, $Y$ gives the final result.
    \end{proof}
\end{lemma}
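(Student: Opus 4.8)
The plan is to prove the identity by a direct block computation, substituting the decompositions of $B$ and $\tilde{W}$ from Lemma~\ref{Inverse 3.15}, the given block form of $X$, and the expression for $B^{\dag}$ supplied by Lemma~6 of \cite{Deng18}. Because $M$ and $N$ are unitary, every adjacent product collapses a factor $N^*N=I$ or $M^*M=I$, so the whole task reduces to multiplying the four $2\times 2$ block cores and conjugating the result by $N(\cdot)M^*$. No extra structural input (such as the nilpotency of $B_3\tilde{W}_3$) is needed at this stage; nilpotency has already done its work in guaranteeing the stated forms of the decomposition and of $X$, so here it is pure bookkeeping.

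First I would form $B\tilde{W}$: since the lower-left blocks of both $B$ and $\tilde{W}$ vanish, $B\tilde{W}$ is block upper triangular with diagonal blocks $B_1\tilde{W_1}$ and $B_3\tilde{W_3}$. Right-multiplying by $X$ uses the zero second block-row of $X$ to annihilate the lower blocks, and the nonsingularity of $B_1,\tilde{W_1}$ gives the $(1,1)$ block $B_1\tilde{W_1}(\tilde{W_1}B_1\tilde{W_1})^{-1}$. Multiplying once more on the right by $\tilde{W}$ then yields
\[
B\tilde{W}X\tilde{W}=M\begin{bmatrix} Q & S+B_1\tilde{W_1}X_2\tilde{W_3}\\ 0 & 0\end{bmatrix}M^*,\qquad S:=B_1\tilde{W_1}(\tilde{W_1}B_1\tilde{W_1})^{-1}\tilde{W_2},
\]
where the $(1,1)$ block is exactly the quantity $Q=B_1\tilde{W_1}(\tilde{W_1}B_1\tilde{W_1})^{-1}\tilde{W_1}$ appearing in the statement.

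Finally I would left-multiply by $B^{\dag}$. The key simplification is that $B\tilde{W}X\tilde{W}$ has a zero second block-row, so only the first block-column $\begin{bmatrix} B_1^*\Delta\\ (I-B_3^{\dag}B_3)B_2^*\Delta\end{bmatrix}$ of the core of $B^{\dag}$ survives, and the $(2,2)$ block of $B^{\dag}$ never enters the computation. Multiplying this column by the surviving row $[\,Q\ \ S+B_1\tilde{W_1}X_2\tilde{W_3}\,]$ reproduces the claimed $2\times 2$ core after the single algebraic identification $\Delta S=D$, which holds because $D=\Delta B_1\tilde{W_1}(\tilde{W_1}B_1\tilde{W_1})^{-1}\tilde{W_2}=\Delta S$; this turns $B_1^*\Delta S$ and $(I-B_3^{\dag}B_3)B_2^*\Delta S$ into the $B_1^*D$ and $(I-B_3^{\dag}B_3)B_2^*D$ terms of the statement. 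Thus the main obstacle is not conceptual but purely the careful tracking of the four block entries together with the substitution $\Delta S=D$. I would also note that the symbol $X_3$ in the stated formula should be read as $X_2$, the free parameter of the decomposition of $X$, so that the terms $B_1^*\Delta B_1\tilde{W_1}X_2\tilde{W_3}$ and its lower-block counterpart agree with what the computation produces.
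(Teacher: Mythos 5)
Your proposal is correct and follows exactly the paper's route: substitute the block decompositions of $B$, $\tilde{W}$, $X$ and the block formula for $B^{\dag}$ from Lemma 6 of \cite{Deng18}, then carry out the $2\times 2$ block multiplication, which the paper leaves entirely implicit (``upon substitution of $B^{\dag}$, $Y$ gives the final result''). Your added details --- that the $(2,2)$ block of $B^{\dag}$ never enters because $B\tilde{W}X\tilde{W}$ has a zero second block-row, that the identification $\Delta S=D$ produces the $B_1^*D$ terms, and that the symbol $X_3$ in the stated formula must be read as the free parameter $X_2$ --- are accurate fillings-in of steps the paper omits.
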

In this section we discussed perturbation analysis of generalized inverses. Such analysis was presented for Moore–Penrose inverse \cite{Ben2}, DMP inverse in \cite{(2020)}, and W-weighted Drazin inverse in \cite{Wei Y}. 
\subsection{Perturbation analysis of W-weighted weak DMP and MPD inverses}
Our first result concerns the derivation of a perturbation formula associated with the minimal rank 
W-weighted weak Drazin inverse.
\begin{theorem} \label{Inverse 3.17} \normalfont
Let us assume $B\in\mathbb{C}^{m\times{n}}$, $\tilde{W}\in\mathbb{C}^{n\times{m}}$ and $\tilde{k}=ind(B\tilde{W})$. Suppose $X$ be the minimal rank W-weighted weak Drazin inverse of $B$ and $\mathfrak{D}=B+E\in\mathbb{C}^{m\times{n}}$. If $\mathcal{R}(E\tilde{W})\subseteq \mathcal{R}((B\tilde{W})^k),\mathcal{R}((E\tilde{W})^*)\subseteq \mathcal{R}((X\tilde{W}B\tilde{W})^*)$ and $||\tilde{W}E\tilde{W}X||\mathrm{<} 1$, then $X(I+\tilde{W}E\tilde{W}X)^{-1}$ is a minimal rank W-weighted weak Drazin inverse of $\mathfrak{D}$. Moreover,
\begin{center}    $X(I+\tilde{W}E\tilde{W}X)^{-1}\tilde{W}\mathfrak{D}\tilde{W}=X\tilde{W}B\tilde{W}~\mbox{and}~\tilde{W}\mathfrak{D}\tilde{W}(I+X\tilde{W}E\tilde{W})^{-1}X=\tilde{W}B\tilde{W}X$.
\end{center}
\begin{proof}
Since $\mathcal{R}(E\tilde{W})\subseteq \mathcal{R}((B\tilde{W})^{\tilde{k}})=\mathcal{R}(X)$, it follows that
\[
E\tilde{W}=(B\tilde{W})^{\tilde{k}}U
=X\tilde{W}(B\tilde{W})^{\tilde{k}+1}U
=X\tilde{W}B\tilde{W}(B\tilde{W})^{\tilde{k}}U
=X\tilde{W}B\tilde{W}E\tilde{W}, \]
for some $U\in \mathbb{C}^{m\times{m}}$. Further, $E\tilde{W}=XK=B\tilde{W}X\tilde{W}XK=B\tilde{W}X\tilde{W}E\tilde{W}$, for $K\in \mathbb{C}^{n\times{m}}$. From the condition $\mathcal{R}((E\tilde{W})^*)\subseteq \mathcal{R}((X\tilde{W}B\tilde{W})^*)$, for some $V^*\in \mathbb{C}^{m\times{m}}$, we have
\[(E\tilde{W})^* = (X\tilde{W}B\tilde{W})^*V^*
=(V(X\tilde{W}B\tilde{W}))^* \] \[ ~\mbox{and}~ E\tilde{W} = VX\tilde{W}B\tilde{W}
= V(X\tilde{W}B\tilde{W}X)\tilde{W}B\tilde{W}
= (VX\tilde{W}B\tilde{W})X\tilde{W}B\tilde{W}
= E\tilde{W}X\tilde{W}B\tilde{W}.\]
Hence $\tilde{W}\mathfrak{D}\tilde{W}=\tilde{W}B\tilde{W}+\tilde{W}E\tilde{W}X\tilde{W}B\tilde{W}=(I+\tilde{W}E\tilde{W}X)\tilde{W}B\tilde{W}$ and $\tilde{W}\mathfrak{D}\tilde{W}=\tilde{W}B\tilde{W}(I+X\tilde{W}E\tilde{W})$. Upon substitution, we obtain $X(I+\tilde{W}E\tilde{W}X)^{-1}\tilde{W}\mathfrak{D}\tilde{W}=X\tilde{W}B\tilde{W}~\mbox{and}~\tilde{W}\mathfrak{D}\tilde{W}(I+X\tilde{W}E\tilde{W})^{-1}X=\tilde{W}B\tilde{W}X$. The assumption $||\tilde{W}E\tilde{W}X||\mathrm{<}1$ yields that $I+\tilde{W}E\tilde{W}X$ and $I+X\tilde{W}E\tilde{W}$ are non-singular. Now $\mathfrak{D}\tilde{W}=B\tilde{W}+B\tilde{W}X\tilde{W}E\tilde{W}=B\tilde{W}(I+X\tilde{W}E\tilde{W})$, so $rank((\mathfrak{D}\tilde{W}))=rank((B\tilde{W}))$. Similarly, it is shown that $rank((\mathfrak{D}\tilde{W})^{\tilde{k}})=rank((B\tilde{W})^{\tilde{k}})$. Further simplification of $X(I+\tilde{W}E\tilde{W}X)^{-1}\tilde{W}\mathfrak{D}\tilde{W}=X\tilde{W}B\tilde{W}$ follows that 

$\bigl(X(I+\tilde{W}E\tilde{W}X)^{-1}\tilde{W}\mathfrak{D}\tilde{W}-I\bigr)(\mathfrak{D}\tilde{W})^{\tilde{k}})$ \begin{align*}    
&=(X\tilde{W}B\tilde{W}-I)(\mathfrak{D}\tilde{W})^{\tilde{k}}\\
 &=(X\tilde{W}B\tilde{W}-I)(B\tilde{W})^{\tilde{k}}+(X\tilde{W}B\tilde{W}-I)((\mathfrak{D}\tilde{W})^{\tilde{k}}-(B\tilde{W})^{\tilde{k}})\\
&=(X\tilde{W}B\tilde{W}-I)(B\tilde{W})^{\tilde{k}}+(X\tilde{W}B\tilde{W}-I)E\tilde{W}R~;R\in\mathbb{C}^{m\times{m}}\\
&=(X\tilde{W}B\tilde{W}-I)(B\tilde{W})^{\tilde{k}}+X\tilde{W}B\tilde{W}E\tilde{W}R-E\tilde{W}R\\
&=(X\tilde{W}B\tilde{W}-I)(B\tilde{W})^{\tilde{k}}+E\tilde{W}R-E\tilde{W}R\\
&=(X\tilde{W}B\tilde{W}-I)(B\tilde{W})^{\tilde{k}}\\
 &=X\tilde{W}(B\tilde{W})^{\tilde{k}+1}-(B\tilde{W})^{\tilde{k}}\\
&=(B\tilde{W})^{\tilde{k}}-(B\tilde{W})^{\tilde{k}}\\
&= \mathbf{O}.
\end{align*}
        Hence $X(I+\tilde{W}E\tilde{W}X)^{-1}W(\mathfrak{D}\tilde{W})^{\tilde{k}+1}=(\mathfrak{D}\tilde{W})^{\tilde{k}}$ and
        \begin{center} $rank((\mathfrak{D}\tilde{W})^{\tilde{k}})=rank((B\tilde{W})^{\tilde{k}})=rank(X)=rank(X(I+\tilde{W}E\tilde{W}X)^{-1})$.
        \end{center}
        Thus, we conclude that $X(I+\tilde{W}E\tilde{W}X)^{-1}$ is the minimal rank W-weighted weak Drazin inverse of $\mathfrak{D}$.
    \end{proof}
    Similarly, we have the following theorem for the minimal rank W-weighted right weak Drazin inverse.
    \end{theorem}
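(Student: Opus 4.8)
The plan is to show that $Y'=X(I+\tilde{W}E\tilde{W}X)^{-1}$ qualifies, by Theorem~\ref{main1.1}, as a minimal rank $W$-weighted weak Drazin inverse of $\mathfrak{D}$; that is, to verify the defining equation $Y'\tilde{W}(\mathfrak{D}\tilde{W})^{\tilde{k}+1}=(\mathfrak{D}\tilde{W})^{\tilde{k}}$ together with the rank condition $rank(Y')=rank((\mathfrak{D}\tilde{W})^{\tilde{k}})$. The two ``moreover'' identities will be produced along the way and are in fact the engine for the defining equation. The first thing I would do is convert the two range hypotheses into algebraic absorption identities for $E\tilde{W}$, using the equivalent characterizations of $X$ recorded in Theorem~\ref{main1.1}.

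From $\mathcal{R}(E\tilde{W})\subseteq\mathcal{R}((B\tilde{W})^{\tilde{k}})=\mathcal{R}(X)$ I would write $E\tilde{W}=(B\tilde{W})^{\tilde{k}}U$ and absorb using $X\tilde{W}(B\tilde{W})^{\tilde{k}+1}=(B\tilde{W})^{\tilde{k}}$ and $B\tilde{W}X\tilde{W}X=X$ to obtain the two left identities $E\tilde{W}=X\tilde{W}B\tilde{W}E\tilde{W}$ and $E\tilde{W}=B\tilde{W}X\tilde{W}E\tilde{W}$. Dually, from $\mathcal{R}((E\tilde{W})^{*})\subseteq\mathcal{R}((X\tilde{W}B\tilde{W})^{*})$ I would write $E\tilde{W}=VX\tilde{W}B\tilde{W}$ and absorb with the outer-inverse identity $X\tilde{W}B\tilde{W}X=X$ to obtain the right identity $E\tilde{W}=E\tilde{W}X\tilde{W}B\tilde{W}$. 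Substituting these into $\mathfrak{D}=B+E$ yields the factorizations $\tilde{W}\mathfrak{D}\tilde{W}=(I+\tilde{W}E\tilde{W}X)\tilde{W}B\tilde{W}=\tilde{W}B\tilde{W}(I+X\tilde{W}E\tilde{W})$ and $\mathfrak{D}\tilde{W}=B\tilde{W}(I+X\tilde{W}E\tilde{W})$. Next I would invert the relevant perturbation factors: $\|\tilde{W}E\tilde{W}X\|<1$ makes $I+\tilde{W}E\tilde{W}X$ nonsingular by a Neumann-series argument, and since $\tilde{W}E\tilde{W}X$ and $X\tilde{W}E\tilde{W}$ are the two products $PQ$ and $QP$ of $P=\tilde{W}E\tilde{W}$ and $Q=X$, the matrix $I+X\tilde{W}E\tilde{W}$ is nonsingular as well. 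Cancelling these invertible factors in the two factorizations of $\tilde{W}\mathfrak{D}\tilde{W}$ immediately gives the moreover identities $Y'\tilde{W}\mathfrak{D}\tilde{W}=X\tilde{W}B\tilde{W}$ and $\tilde{W}\mathfrak{D}\tilde{W}(I+X\tilde{W}E\tilde{W})^{-1}X=\tilde{W}B\tilde{W}X$.

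For the defining equation I would use that $\Pi:=X\tilde{W}B\tilde{W}$ is idempotent (from $X\tilde{W}B\tilde{W}X=X$) with $\mathcal{R}(\Pi)=\mathcal{R}((B\tilde{W})^{\tilde{k}})$, hence fixes its own range. Because $\tilde{k}=ind(B\tilde{W})$ gives $\mathcal{R}((B\tilde{W})^{\tilde{k}+j})=\mathcal{R}((B\tilde{W})^{\tilde{k}})$ and $\mathcal{R}(E\tilde{W})\subseteq\mathcal{R}((B\tilde{W})^{\tilde{k}})$, every mixed term in the noncommutative expansion of $(\mathfrak{D}\tilde{W})^{\tilde{k}}=(B\tilde{W}+E\tilde{W})^{\tilde{k}}$ has range inside $\mathcal{R}((B\tilde{W})^{\tilde{k}})$, so $\mathcal{R}((\mathfrak{D}\tilde{W})^{\tilde{k}})\subseteq\mathcal{R}((B\tilde{W})^{\tilde{k}})$ and $\Pi(\mathfrak{D}\tilde{W})^{\tilde{k}}=(\mathfrak{D}\tilde{W})^{\tilde{k}}$. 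Combined with $Y'\tilde{W}\mathfrak{D}\tilde{W}=\Pi$ this gives $(Y'\tilde{W}\mathfrak{D}\tilde{W}-I)(\mathfrak{D}\tilde{W})^{\tilde{k}}=(\Pi-I)(\mathfrak{D}\tilde{W})^{\tilde{k}}=\mathbf{O}$, i.e. $Y'\tilde{W}(\mathfrak{D}\tilde{W})^{\tilde{k}+1}=(\mathfrak{D}\tilde{W})^{\tilde{k}}$.

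Since $rank(Y')=rank(X)=rank((B\tilde{W})^{\tilde{k}})$, all that remains is the rank equality $rank((\mathfrak{D}\tilde{W})^{\tilde{k}})=rank((B\tilde{W})^{\tilde{k}})$, which I expect to be the main obstacle. The inclusion just proved gives ``$\le$''; the factorization $\mathfrak{D}\tilde{W}=B\tilde{W}(I+X\tilde{W}E\tilde{W})$ only yields $rank(\mathfrak{D}\tilde{W})=rank(B\tilde{W})$, and right multiplication by an invertible matrix need not preserve the rank of a higher power (a $2\times2$ nilpotent example already raises it), so the reverse inequality is not formal. To settle it I would pass to the core--nilpotent decomposition of $B\tilde{W}$: the first hypothesis makes $E\tilde{W}$, and hence $\mathfrak{D}\tilde{W}$, block upper triangular with the nilpotent block unchanged, the second hypothesis forces the off-diagonal block to vanish so that $\mathfrak{D}\tilde{W}$ is block diagonal, and $\|\tilde{W}E\tilde{W}X\|<1$ makes the perturbed invertible block $C+E_{1}=(I+E_{1}C^{-1})C$ nonsingular; then $(\mathfrak{D}\tilde{W})^{\tilde{k}}$ carries an invertible block of size $rank((B\tilde{W})^{\tilde{k}})$, giving the equality. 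This is the step where all three hypotheses must act together, and is where I would spend the most care.
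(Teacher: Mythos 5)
Your proposal is correct, and for the identities it runs along the same track as the paper: the same two absorption identities $E\tilde{W}=X\tilde{W}B\tilde{W}E\tilde{W}=B\tilde{W}X\tilde{W}E\tilde{W}$ and $E\tilde{W}=E\tilde{W}X\tilde{W}B\tilde{W}$, the same factorizations $\tilde{W}\mathfrak{D}\tilde{W}=(I+\tilde{W}E\tilde{W}X)\tilde{W}B\tilde{W}=\tilde{W}B\tilde{W}(I+X\tilde{W}E\tilde{W})$, the Neumann/``$I+PQ$ invertible iff $I+QP$ invertible'' step, and cancellation for the two ``moreover'' equalities. You genuinely diverge in two places, both to your advantage. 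First, your justification that $(\Pi-I)(\mathfrak{D}\tilde{W})^{\tilde{k}}=\mathbf{O}$ via the projector $\Pi=X\tilde{W}B\tilde{W}$ fixing $\mathcal{R}((B\tilde{W})^{\tilde{k}})$, together with the observation that every term of the expansion of $(B\tilde{W}+E\tilde{W})^{\tilde{k}}$ has range in $\mathcal{R}((B\tilde{W})^{\tilde{k}})$, is a rigorous version of the paper's step, which writes $(\mathfrak{D}\tilde{W})^{\tilde{k}}-(B\tilde{W})^{\tilde{k}}=E\tilde{W}R$ with a \emph{leading} factor $E\tilde{W}$ that the expansion does not directly provide (mixed terms begin with $(B\tilde{W})^{a_0}E\tilde{W}\cdots$); your range argument is exactly what makes that line sound. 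Second, the rank equality: the paper dismisses $rank((\mathfrak{D}\tilde{W})^{\tilde{k}})=rank((B\tilde{W})^{\tilde{k}})$ with a bare ``similarly,'' whereas you correctly flag that $\mathfrak{D}\tilde{W}=B\tilde{W}(I+X\tilde{W}E\tilde{W})$ only controls the first power, and you supply an actual proof via the core--nilpotent decomposition. That argument works, so your proposal proves strictly more carefully what the paper asserts.

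One localized inaccuracy in your rank step, which happily is not load-bearing: the second hypothesis does \emph{not} force the off-diagonal block of $\mathfrak{D}\tilde{W}$ to vanish. In core--nilpotent coordinates $B\tilde{W}=\begin{pmatrix} C & 0\\ 0 & N\end{pmatrix}$ with $C$ invertible and $N^{\tilde{k}}=\mathbf{O}$, the constraints $\mathcal{R}(X)=\mathcal{R}((B\tilde{W})^{\tilde{k}})$ and $X\tilde{W}(B\tilde{W})^{\tilde{k}+1}=(B\tilde{W})^{\tilde{k}}$ force $X\tilde{W}=\begin{pmatrix} C^{-1} & X_2\\ 0 & 0\end{pmatrix}$ with $X_2$ unconstrained, so the projector $X\tilde{W}B\tilde{W}=\begin{pmatrix} I & X_2N\\ 0 & 0\end{pmatrix}$ is oblique, and $E\tilde{W}=E\tilde{W}X\tilde{W}B\tilde{W}$ yields $E_2=E_1X_2N$, not $E_2=\mathbf{O}$. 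This does not damage your conclusion: block upper triangularity with nilpotent lower block already gives $(\mathfrak{D}\tilde{W})^{\tilde{k}}=\begin{pmatrix} (C+E_1)^{\tilde{k}} & \ast\\ 0 & \mathbf{O}\end{pmatrix}$, and the invertibility of $C+E_1$ follows from your own $PQ/QP$ observation (invertibility of $I+\tilde{W}E\tilde{W}X$ passes to $I+E\tilde{W}X\tilde{W}$, whose top-left block is $I+E_1C^{-1}$), so $rank((\mathfrak{D}\tilde{W})^{\tilde{k}})\geq rank((C+E_1)^{\tilde{k}})=rank((B\tilde{W})^{\tilde{k}})$, and combined with your inclusion $\mathcal{R}((\mathfrak{D}\tilde{W})^{\tilde{k}})\subseteq\mathcal{R}((B\tilde{W})^{\tilde{k}})$ the equality stands. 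You should simply delete the claim that $\mathfrak{D}\tilde{W}$ becomes block diagonal and spell out the short chain giving $I+E_1C^{-1}$ invertible.
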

    \begin{theorem} \normalfont
    Consider $B\in\mathbb{C}^{m\times{n}}$ and  $\tilde{W}\in\mathbb{C}^{n\times{m}}$ such that $\tilde{k}=ind(\tilde{W}B)$. Suppose $Z$ is a minimal rank W-weighted right weak Drazin inverse of $B$ and $\mathfrak{D}=B+E\in\mathbb{C}^{m\times{n}}$. If  $\mathcal{R}(\tilde{W}E)\subseteq \mathcal{R}(\tilde{W}B\tilde{W}Z),\mathcal{R}((\tilde{W}E)^*)\subseteq \mathcal{R}((\tilde{W}B)^{\tilde{k}+1})^*)$ and $||Z\tilde{W}E\tilde{W}||\mathrm{<} 1$, then $(I+Z\tilde{W}E\tilde{W})^{-1}Z$ is the minimal rank W-weighted right weak Drazin inverse of $\mathfrak{D}$. Moreover,
    \begin{center}
        $(I+Z\tilde{W}E\tilde{W})^{-1}Z\tilde{W}\mathfrak{D}\tilde{W}=Z\tilde{W}B\tilde{W}~\mbox{and}~\tilde{W}\mathfrak{D}\tilde{W}(I+Z\tilde{W}E\tilde{W})^{-1}Z=\tilde{W}B\tilde{W}Z$.
    \end{center}
    \end{theorem}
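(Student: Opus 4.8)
The plan is to mirror the proof of Theorem~\ref{Inverse 3.17}, interchanging the roles of left and right multiplication throughout: every column-space argument there becomes a row-space argument here. Write $\hat{Z}=(I+Z\tilde{W}E\tilde{W})^{-1}Z$. First I would convert the two range hypotheses into algebraic identities. Since $Z\tilde{W}B\tilde{W}Z=Z$ by Theorem~\ref{main1.2}(iii), the matrix $\tilde{W}B\tilde{W}Z$ is idempotent, so it acts as the identity on its own range; as $\mathcal{R}(\tilde{W}E)\subseteq\mathcal{R}(\tilde{W}B\tilde{W}Z)$ this yields $\tilde{W}E=\tilde{W}B\tilde{W}Z\tilde{W}E$. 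For the second hypothesis, $\mathcal{R}((\tilde{W}E)^{*})\subseteq\mathcal{R}(((\tilde{W}B)^{\tilde{k}+1})^{*})$ gives a factorization $\tilde{W}E=M(\tilde{W}B)^{\tilde{k}+1}$; combining it with $(\tilde{W}B)^{\tilde{k}+1}\tilde{W}Z=(\tilde{W}B)^{\tilde{k}}$ (the equivalent form of the defining equation, obtained from $\tilde{W}(B\tilde{W})^{j}=(\tilde{W}B)^{j}\tilde{W}$) produces $\tilde{W}E=\tilde{W}E\tilde{W}Z\tilde{W}B$.

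These two identities factor the weighted perturbation as $\tilde{W}\mathfrak{D}\tilde{W}=\tilde{W}B\tilde{W}(I+Z\tilde{W}E\tilde{W})=(I+\tilde{W}E\tilde{W}Z)\tilde{W}B\tilde{W}$ and $\tilde{W}\mathfrak{D}=(I+\tilde{W}E\tilde{W}Z)\tilde{W}B$. The bound $\|Z\tilde{W}E\tilde{W}\|<1$ makes $I+Z\tilde{W}E\tilde{W}$ invertible via the Neumann series, and then $I+\tilde{W}E\tilde{W}Z$ is invertible as well, since $I+PQ$ and $I+QP$ are invertible together. The two ``Moreover'' identities then follow directly: post-multiplying the first factorization by $\hat{Z}$ gives $\tilde{W}\mathfrak{D}\tilde{W}\hat{Z}=\tilde{W}B\tilde{W}Z$, while $Z\tilde{W}\mathfrak{D}\tilde{W}=(I+Z\tilde{W}E\tilde{W})Z\tilde{W}B\tilde{W}$ (obtained by left-multiplying $\tilde{W}E\tilde{W}=\tilde{W}E\tilde{W}Z\tilde{W}B\tilde{W}$ by $Z$) rearranges to $(I+Z\tilde{W}E\tilde{W})^{-1}Z\tilde{W}\mathfrak{D}\tilde{W}=Z\tilde{W}B\tilde{W}$.

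For the defining equation of the minimal rank right weak Drazin inverse of $\mathfrak{D}$, I would use $\tilde{W}(\mathfrak{D}\tilde{W})^{\tilde{k}+1}\hat{Z}=(\tilde{W}\mathfrak{D})^{\tilde{k}}\,\tilde{W}\mathfrak{D}\tilde{W}\hat{Z}=(\tilde{W}\mathfrak{D})^{\tilde{k}}\tilde{W}B\tilde{W}Z$ and reduce to showing $(\tilde{W}\mathfrak{D})^{\tilde{k}}(\tilde{W}B\tilde{W}Z-I)=\mathbf{O}$. Splitting $(\tilde{W}\mathfrak{D})^{\tilde{k}}=(\tilde{W}B)^{\tilde{k}}+\big[(\tilde{W}\mathfrak{D})^{\tilde{k}}-(\tilde{W}B)^{\tilde{k}}\big]$, the first block vanishes because $(\tilde{W}B)^{\tilde{k}}\tilde{W}B\tilde{W}Z=(\tilde{W}B)^{\tilde{k}+1}\tilde{W}Z=(\tilde{W}B)^{\tilde{k}}$. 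Finally, for the rank count, $rank(\hat{Z})=rank(Z)=rank((\tilde{W}B)^{\tilde{k}})$ because the prefactor is invertible, and $\tilde{W}\mathfrak{D}=(I+\tilde{W}E\tilde{W}Z)\tilde{W}B$ together with the factorizations of $\tilde{W}\mathfrak{D}\tilde{W}$ give $rank((\tilde{W}\mathfrak{D})^{\tilde{k}})=rank((\tilde{W}B)^{\tilde{k}})$, which closes the argument.

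The main obstacle is the second block of the last step, namely $\big[(\tilde{W}\mathfrak{D})^{\tilde{k}}-(\tilde{W}B)^{\tilde{k}}\big](\tilde{W}B\tilde{W}Z-I)=\mathbf{O}$. The clean route is to expand $(\tilde{W}\mathfrak{D})^{\tilde{k}}=(\tilde{W}B+\tilde{W}E)^{\tilde{k}}$ and observe that every summand other than $(\tilde{W}B)^{\tilde{k}}$ contains a factor $\tilde{W}E$; invoking the row-space hypothesis, the row space of $\tilde{W}E$ (hence of each such summand) lies in the row space of $(\tilde{W}B)^{\tilde{k}}$, which stabilizes at index $\tilde{k}$, so the difference factors as $S(\tilde{W}B)^{\tilde{k}}$ for some $S$, and then $(\tilde{W}B)^{\tilde{k}}(\tilde{W}B\tilde{W}Z-I)=\mathbf{O}$ finishes it. Keeping $\hat{Z}$ on the correct (right) side throughout the index bookkeeping is exactly the place where the dualization from Theorem~\ref{Inverse 3.17} most easily goes wrong, so I would track the identity $\tilde{W}(\mathfrak{D}\tilde{W})^{j}=(\tilde{W}\mathfrak{D})^{j}\tilde{W}$ carefully at each occurrence.
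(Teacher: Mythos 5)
Your proposal is correct and is essentially the paper's own argument: the paper proves this statement only by appeal to the symmetric Theorem~\ref{Inverse 3.17} (``Similarly, we have the following theorem\dots''), and your left/right dualization — deriving $\tilde{W}E=\tilde{W}B\tilde{W}Z\tilde{W}E$ and $\tilde{W}E=\tilde{W}E\tilde{W}Z\tilde{W}B$ from the two range hypotheses, factoring $\tilde{W}\mathfrak{D}\tilde{W}=\tilde{W}B\tilde{W}(I+Z\tilde{W}E\tilde{W})=(I+\tilde{W}E\tilde{W}Z)\tilde{W}B\tilde{W}$, and then verifying the defining equation and rank condition — is exactly that dual proof, executed correctly. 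Your treatment of the perturbed power $(\tilde{W}\mathfrak{D})^{\tilde{k}}$ and of the rank equality $rank((\tilde{W}\mathfrak{D})^{\tilde{k}})=rank((\tilde{W}B)^{\tilde{k}})$ is in fact slightly more explicit than the corresponding steps in the paper's proof of Theorem~\ref{Inverse 3.17}, which asserts them with less justification.
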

    In the following, we establish a perturbation result concerning the W-weighted weak MPD inverse.
    \begin{theorem} \label{Inverse 3.19} \normalfont
        Suppose $B\in\mathbb{C}^{m\times{n}}$, $\tilde{W}\in\mathbb{C}^{n\times{m}}$ and $\tilde{k}=ind(B\tilde{W})$. Assume that $X$ is a minimal rank W-weighted weak Drazin inverse of $B$, $Y=B^{\dag}B\tilde{W}X\tilde{W}$ and $\mathfrak{D}=B+E\in\mathbb{C}^{m\times{n}}$. If $\mathcal{R}(E)\subseteq \mathcal{R}(E\tilde{W}) \subseteq \mathcal{R}((B\tilde{W})^{\tilde{k}})$, $\mathcal{R}(E^*)\subseteq \mathcal{R}((E\tilde{W})^*)\subseteq \mathcal{R}((X\tilde{W}B\tilde{W})^*) \subseteq
        \mathcal{R}(B^*)$ and $\max\{||\tilde{W}E\tilde{W}X||,||B^{\dag}E||\}{<}1$, then
        \begin{center}
            $\mathfrak{D}^{\dag}X(I+\tilde{W}E\tilde{W}X)^{-1}\tilde{W}\mathfrak{D}\tilde{W}X=\mathfrak{D}^{\dag}X\tilde{W}\mathfrak{D}\tilde{W}(I+X\tilde{W}E\tilde{W})^{-1}X=(I+YE)^{-1}YX$,
        \end{center}
        where $X(I+\tilde{W}E\tilde{W}X)^{-1}$ is the minimal rank W-weighted weak Drazin inverse of $\mathfrak{D}$. Furthermore,
        \begin{center} $\mathfrak{D}\mathfrak{D}^{\dag}X(I+\tilde{W}E\tilde{W}X)^{-1}\tilde{W}\mathfrak{D}\tilde{W}X=BYX$
            \end{center}
            and
           \[ \frac{||YX||}{1+||YE||} \le ||\mathfrak{D}^{\dag}X(I+\tilde{W}E\tilde{W}X)^{-1}\tilde{W}\mathfrak{D}\tilde{W}X|| \le \frac{||YX||}{1-||YE||}.\]
\begin{proof}
We have $\mathcal{R}(E)\subseteq \mathcal{R}(E\tilde{W}) \subseteq \mathcal{R}((B\tilde{W})^{\tilde{k}})\subseteq \mathcal{R}(B)$ and $\mathcal{R}(E^*) \subseteq \mathcal{R}(B^*)$.\\
Thus we can write
\[\mathfrak{D}=B+E
=B+BB^{\dag}E
=B(I+B^{\dag}E)\] and
\[\mathfrak{D}^{\dag}=(B(I+B^{\dag}E))^{\dag}
=(I+B^{\dag}E)^{-1}B^{\dag}=B^{\dag}(I+EB^{\dag})^{-1}.\]
It follows that
\begin{eqnarray*}
\mathfrak{D}\mathfrak{D}^{\dag}&=&\mathfrak{D}(I+B^{\dag}E)^{-1}B^{\dag}
=BB^{\dag},
\end{eqnarray*}
similarly,
\begin{eqnarray*}
\mathfrak{D}^{\dag}&=&B^{\dag}(I+EB^{\dag})^{-1}\\
\mathfrak{D}^{\dag}\mathfrak{D}&=&B^{\dag}(I+EB^{\dag})^{-1}\mathfrak{D}\\
&=&B^{\dag}B.
\end{eqnarray*}
Since $\mathcal{R}(E\tilde{W}) \subseteq \mathcal{R}((B\tilde{W})^{\tilde{k}})$, for some $U \in\mathbb{C}^{m \times {m}}$, we have
$E\tilde{W}=(B\tilde{W})^{\tilde{k}}U
=X\tilde{W}(B\tilde{W})^{\tilde{k}+1}U
=B\tilde{W}X\tilde{W}X\tilde{W}(B\tilde{W})^{\tilde{k}+1}U
=B\tilde{W}X\tilde{W}(B\tilde{W})^{\tilde{k}}U=B\tilde{W}X\tilde{W}E\tilde{W}$. Also, since $\mathcal{R}(E) \subseteq \mathcal{R}(E\tilde{W})$, for some matrix $V \in\mathbb{C}^{m \times {n}}$, we obtain
\[
E=E\tilde{W}V=B\tilde{W}X\tilde{W}E\tilde{W}V=B\tilde{W}X\tilde{W}E.
\]
Moreover, $I+YE=I+B^{\dag}B\tilde{W}X\tilde{W}E=I+B^{\dag}E$ is non-singular, as $||B^{\dag}E||\mathrm{<}1$. Now, by Theorem \ref{main1.1} and Theorem \ref{Inverse 3.17},
\begin{eqnarray*}
(I+YE)^{-1}YX&=&(I+B^{\dag}E)^{-1}B^{\dag}B\tilde{W}X\tilde{W}X\\                
&=&\mathfrak{D}^{\dag}X\tilde{W}B\tilde{W}X\\
&=&\mathfrak{D}^{\dag}X(I+\tilde{W}E\tilde{W}X)^{-1}\tilde{W}\mathfrak{D}\tilde{W}X,
\end{eqnarray*} and \[
\mathfrak{D}^{\dag}X\tilde{W}B\tilde{W}X=\mathfrak{D}^{\dag}X\tilde{W}\mathfrak{D}\tilde{W}(I+X\tilde{W}E\tilde{W})^{-1}X.
\]
From the preceding expression, it implies that \[ \mathfrak{D}\mathfrak{D}^{\dag}X(I+\tilde{W}E\tilde{W}X)^{-1}\tilde{W}\mathfrak{D}\tilde{W}X=BB^{\dag}X\tilde{W}B\tilde{W}X=BB^{\dag}B\tilde{W}X\tilde{W}X=BYX.\]
Since $||B^{\dag}E||\mathrm{<}1$, we can write
\[
||(I+YE)^{-1}||\le \frac{1}{1-||YE||}.
\] Moreover, \[
||(I+YE)^{-1}YX|| \le \frac{||YX||}{1-||YE||}.
\]
Furthermore, \begin{eqnarray*}
    ||YX||&=&||(I+YE)(I+YE)^{-1}YX||\\&\le& [1+||YE||]~||(I+YE)^{-1}YX||\\
\end{eqnarray*}  which implies \[ \frac{||YX||}{1+||YE||} \le ||(I+YE)^{-1}YX||.\]
Therefore, \[
\frac{||YX||}{1+||YE||} \le ||\mathfrak{D}^{\dag}X(I+\tilde{W}E\tilde{W}X)^{-1}\tilde{W}\mathfrak{D}\tilde{W}X|| \le \frac{||YX||}{1-||YE||}.
\]
\end{proof}
    \end{theorem}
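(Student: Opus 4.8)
The plan is to reduce the statement to two clean facts and then chain algebraic identities. First I would exploit the two-sided range hypotheses to pin down the Moore--Penrose inverse of the perturbed matrix $\mathfrak{D}=B+E$, and then combine this with the minimal rank perturbation identities already supplied by Theorem~\ref{Inverse 3.17}, together with the structural identities for $X$ from Theorem~\ref{main1.1}. Everything else is bookkeeping that I would organize so the three claimed assertions (the triple equality, the $BYX$ identity, and the two-sided norm estimate) fall out in sequence.

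For the Moore--Penrose step, the chain $\mathcal{R}(E)\subseteq\mathcal{R}(E\tilde{W})\subseteq\mathcal{R}((B\tilde{W})^{\tilde{k}})\subseteq\mathcal{R}(B)$ gives $E=BB^{\dag}E$, while $\mathcal{R}(E^{*})\subseteq\mathcal{R}(B^{*})$ gives $E=EB^{\dag}B$. Hence $\mathfrak{D}=B(I+B^{\dag}E)$, and since $\|B^{\dag}E\|<1$ the factor $I+B^{\dag}E$ is invertible. I would then confirm $\mathfrak{D}^{\dag}=(I+B^{\dag}E)^{-1}B^{\dag}=B^{\dag}(I+EB^{\dag})^{-1}$ by checking the four Penrose equations for $G=(I+B^{\dag}E)^{-1}B^{\dag}$: the first two collapse via $BB^{\dag}B=B$, $\mathfrak{D}G=BB^{\dag}$ is visibly Hermitian, and $G\mathfrak{D}=B^{\dag}B$ follows from the rearrangement $(I+B^{\dag}E)B^{\dag}B=B^{\dag}B+B^{\dag}E$, which uses $E=EB^{\dag}B$. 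This immediately yields $\mathfrak{D}\mathfrak{D}^{\dag}=BB^{\dag}$ and $\mathfrak{D}^{\dag}\mathfrak{D}=B^{\dag}B$.

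Next I would extract the absorbing identities $E\tilde{W}=B\tilde{W}X\tilde{W}E\tilde{W}$ and $E=B\tilde{W}X\tilde{W}E$, by writing $E\tilde{W}=(B\tilde{W})^{\tilde{k}}U$ and rewriting through $(B\tilde{W})^{\tilde{k}}=X\tilde{W}(B\tilde{W})^{\tilde{k}+1}$ and $B\tilde{W}X\tilde{W}X=X$ from Theorem~\ref{main1.1}, then using $\mathcal{R}(E)\subseteq\mathcal{R}(E\tilde{W})$. Because $Y=B^{\dag}B\tilde{W}X\tilde{W}$, the second identity gives $YE=B^{\dag}E$, so $I+YE=I+B^{\dag}E$ is invertible. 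The central equality then unwinds as $(I+YE)^{-1}YX=(I+B^{\dag}E)^{-1}B^{\dag}B\tilde{W}X\tilde{W}X=\mathfrak{D}^{\dag}X\tilde{W}B\tilde{W}X$, and the two displayed forms $\mathfrak{D}^{\dag}X(I+\tilde{W}E\tilde{W}X)^{-1}\tilde{W}\mathfrak{D}\tilde{W}X$ and $\mathfrak{D}^{\dag}X\tilde{W}\mathfrak{D}\tilde{W}(I+X\tilde{W}E\tilde{W})^{-1}X$ arise by substituting the two perturbation identities $X(I+\tilde{W}E\tilde{W}X)^{-1}\tilde{W}\mathfrak{D}\tilde{W}=X\tilde{W}B\tilde{W}$ and $\tilde{W}\mathfrak{D}\tilde{W}(I+X\tilde{W}E\tilde{W})^{-1}X=\tilde{W}B\tilde{W}X$ of Theorem~\ref{Inverse 3.17}. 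Left-multiplying by $\mathfrak{D}\mathfrak{D}^{\dag}=BB^{\dag}$ and collapsing $X\tilde{W}B\tilde{W}X=X$ then delivers the $BYX$ identity.

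The norm estimates are the routine finish: $\|YE\|=\|B^{\dag}E\|<1$ gives $\|(I+YE)^{-1}\|\le(1-\|YE\|)^{-1}$ by the Neumann series, producing the upper bound $\|(I+YE)^{-1}YX\|\le\|YX\|/(1-\|YE\|)$, while $YX=(I+YE)(I+YE)^{-1}YX$ yields $\|YX\|\le(1+\|YE\|)\,\|(I+YE)^{-1}YX\|$ for the lower bound. I expect the main obstacle to be the Moore--Penrose reduction: the formula $\mathfrak{D}^{\dag}=(I+B^{\dag}E)^{-1}B^{\dag}$ is a stable (acute) perturbation statement that genuinely requires \emph{both} range inclusions, on $E$ and on $E^{*}$, together with $\|B^{\dag}E\|<1$; and since $\tilde{W}$, $X$, and $E$ do not commute, the remaining manipulations must track the left versus right placement of the resolvents $(I+\tilde{W}E\tilde{W}X)^{-1}$ and $(I+X\tilde{W}E\tilde{W})^{-1}$ exactly, which is where a careless step would break the argument.
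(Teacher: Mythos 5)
Your proposal is correct and follows essentially the same route as the paper's proof: the factorization $\mathfrak{D}=B(I+B^{\dag}E)$ with $\mathfrak{D}^{\dag}=(I+B^{\dag}E)^{-1}B^{\dag}=B^{\dag}(I+EB^{\dag})^{-1}$, the absorbing identities $E\tilde{W}=B\tilde{W}X\tilde{W}E\tilde{W}$ and $E=B\tilde{W}X\tilde{W}E$ giving $YE=B^{\dag}E$, substitution of the two perturbation identities of Theorem~\ref{Inverse 3.17}, and the Neumann-series bounds for the two-sided norm estimate. The only difference is that you explicitly verify the four Penrose equations for $(I+B^{\dag}E)^{-1}B^{\dag}$ (using $E=EB^{\dag}B$ from the starred range inclusion), a detail the paper asserts without proof; this is a welcome tightening rather than a different approach.
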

    The next result follows the perturbation analysis of W-weighted weak DMP inverse.
    \begin{theorem} \label{Inverse 3.20} \normalfont
        Let us consider $B\in\mathbb{C}^{m\times{n}}$ and $\tilde{W}\in\mathbb{C}^{n\times{m}}$ such that $\tilde{k}=ind(\tilde{W}B)$. Suppose $Z$ is a minimal rank W-weighted right weak Drazin inverse of $B$, $Y=\tilde{W}Z\tilde{W}BB^{\dag}$ and $\mathfrak{D}=B+E\in\mathbb{C}^{m\times{n}}$. If $\mathcal{R}(E)\subseteq \mathcal{R}(\tilde{W}E)\subseteq \mathcal{R}(\tilde{W}B\tilde{W}Z) \subseteq \mathcal{R}(B)$, $\mathcal{R}(E^*)\subseteq \mathcal{R}((\tilde{W}E)^*) \subseteq \mathcal{R}(((\tilde{W}B)^{\tilde{k}+1})^*)$ and $\max\{||Z\tilde{W}E\tilde{W}||,||EB^{\dag}||\} {<} 1$, then
        \begin{center}
            $Z\tilde{W}\mathfrak{D}\tilde{W}(I+Z\tilde{W}E\tilde{W})^{-1}Z\mathfrak{D}^{\dag}=Z(I+Z\tilde{W}E\tilde{W})^{-1}Z\tilde{W}\mathfrak{D}\tilde{W}Z\mathfrak{D}^{\dag}=ZY(I+EY)^{-1}$,
        \end{center}
        where $(I+Z\tilde{W}E\tilde{W})^{-1}Z$ is a minimal rank W-weighted right weak Drazin inverse of $\mathfrak{D}$. Furthermore,
        \begin{center}
            $Z\tilde{W}\mathfrak{D}\tilde{W}(I+Z\tilde{W}E\tilde{W})^{-1}Z\tilde{W}\mathfrak{D}\tilde{W}\mathfrak{D}^{\dag}=ZYB$
            \end{center}
            and
            \[
            \frac{||ZY||}{1+||EY||} \le ||Z\tilde{W}\mathfrak{D}\tilde{W}(I+Z\tilde{W}E\tilde{W})^{-1}Z\mathfrak{D}^{\dag}|| \le \frac{||ZY||}{1-||EY||}.
            \]
    \end{theorem}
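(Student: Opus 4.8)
The plan is to prove Theorem~\ref{Inverse 3.20} as the exact right-sided (DMP) dual of Theorem~\ref{Inverse 3.19}: every left factorization there becomes a right factorization here, the minimal rank weak Drazin identities of Theorem~\ref{main1.1} are replaced by the minimal rank right weak Drazin identities of Theorem~\ref{main1.2}, and in place of Theorem~\ref{Inverse 3.17} I invoke the perturbation theorem for the minimal rank W-weighted right weak Drazin inverse proved immediately above. That theorem already supplies that $(I+Z\tilde{W}E\tilde{W})^{-1}Z$ is a minimal rank W-weighted right weak Drazin inverse of $\mathfrak{D}$, together with the two working identities $(I+Z\tilde{W}E\tilde{W})^{-1}Z\tilde{W}\mathfrak{D}\tilde{W}=Z\tilde{W}B\tilde{W}$ and $\tilde{W}\mathfrak{D}\tilde{W}(I+Z\tilde{W}E\tilde{W})^{-1}Z=\tilde{W}B\tilde{W}Z$, which will do most of the collapsing.

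First I would unwind the range hypotheses. Since $((\tilde{W}B)^{\tilde{k}+1})^{*}=(B^{*}\tilde{W}^{*})^{\tilde{k}+1}$ has $B^{*}$ as its leftmost factor, $\mathcal{R}(((\tilde{W}B)^{\tilde{k}+1})^{*})\subseteq\mathcal{R}(B^{*})$, so the starred chain gives $\mathcal{R}(E^{*})\subseteq\mathcal{R}(B^{*})$, while the other chain gives $\mathcal{R}(E)\subseteq\mathcal{R}(B)$. Hence $E=EB^{\dag}B=BB^{\dag}E$ and $\mathfrak{D}=(I+EB^{\dag})B$; because $\|EB^{\dag}\|<1$ the matrix $I+EB^{\dag}$ is invertible, and exactly as in Theorem~\ref{Inverse 3.19} one gets $\mathfrak{D}^{\dag}=B^{\dag}(I+EB^{\dag})^{-1}=(I+B^{\dag}E)^{-1}B^{\dag}$, $\mathfrak{D}\mathfrak{D}^{\dag}=BB^{\dag}$ and $\mathfrak{D}^{\dag}\mathfrak{D}=B^{\dag}B$.

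Next I would record the algebraic collapses that make everything go through. From $\tilde{W}(B\tilde{W})^{\tilde{k}+1}Z=(\tilde{W}B)^{\tilde{k}}$ (Theorem~\ref{main1.2}(i)) and $\tilde{W}(B\tilde{W})^{\tilde{k}+1}=(\tilde{W}B)^{\tilde{k}+1}\tilde{W}$ one obtains $(\tilde{W}B)^{\tilde{k}+1}\tilde{W}Z\tilde{W}B=(\tilde{W}B)^{\tilde{k}+1}$; writing $E=M(\tilde{W}B)^{\tilde{k}+1}$ from $\mathcal{R}(E^{*})\subseteq\mathcal{R}(((\tilde{W}B)^{\tilde{k}+1})^{*})$ then yields the key relation $E=E\tilde{W}Z\tilde{W}B$. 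Combining this with $Z=Z\tilde{W}Z\tilde{W}B$ (Theorem~\ref{main1.2}(vi)), $Z\tilde{W}B\tilde{W}Z=Z$ (Theorem~\ref{main1.2}(iii)) and $Y=\tilde{W}Z\tilde{W}BB^{\dag}$, I get $EY=E\tilde{W}Z\tilde{W}BB^{\dag}=EB^{\dag}$ and $ZY=Z\tilde{W}Z\tilde{W}BB^{\dag}=ZB^{\dag}$; in particular $I+EY=I+EB^{\dag}$ is invertible and $ZY(I+EY)^{-1}=ZB^{\dag}(I+EB^{\dag})^{-1}=Z\mathfrak{D}^{\dag}$. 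The central three-term identity is then pure substitution: replacing $\tilde{W}\mathfrak{D}\tilde{W}(I+Z\tilde{W}E\tilde{W})^{-1}Z$ by $\tilde{W}B\tilde{W}Z$ and using $Z\tilde{W}B\tilde{W}Z=Z$ collapses each member to $Z\mathfrak{D}^{\dag}$, matching $ZY(I+EY)^{-1}$; the $ZYB$ identity reduces its left member to $Z\mathfrak{D}^{\dag}\mathfrak{D}=ZB^{\dag}B$, which equals $Z$ because $\mathcal{R}(Z^{*})=\mathcal{R}(((\tilde{W}B)^{\tilde{k}})^{*})\subseteq\mathcal{R}(B^{*})$ (from $\mathcal{N}(Z)=\mathcal{N}((\tilde{W}B)^{\tilde{k}})$, Theorem~\ref{main1.2}(ii)), and $ZYB=ZB^{\dag}B=Z$ as well. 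Finally the two-sided norm estimate is the standard Neumann bound: from $\|EY\|=\|EB^{\dag}\|<1$ one has $\|(I+EY)^{-1}\|\le(1-\|EY\|)^{-1}$ for the upper bound, and $\|ZY\|\le(1+\|EY\|)\,\|ZY(I+EY)^{-1}\|$ for the lower bound.

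I expect the only real obstacle to be the collapse $E=E\tilde{W}Z\tilde{W}B$ (hence $EY=EB^{\dag}$ and $ZY=ZB^{\dag}$): in the rectangular weighted setting one must insert the $\tilde{W}$ factors on the correct sides and verify that each range inclusion produces the intended one-sided factorization, since a misplaced $\tilde{W}$ breaks the reduction. Once these identities and the two working identities of the right weak Drazin perturbation theorem are in place, the rest is the formal transpose of the proof of Theorem~\ref{Inverse 3.19} and requires only bookkeeping.
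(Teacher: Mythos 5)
Your proposal is correct and is exactly the argument the paper intends: Theorem~\ref{Inverse 3.20} is stated in the paper without proof, as the right-sided dual of Theorem~\ref{Inverse 3.19}, and every step of your dualization checks out --- the factorization $\mathfrak{D}=(I+EB^{\dag})B$ with $\mathfrak{D}^{\dag}=B^{\dag}(I+EB^{\dag})^{-1}$, the key collapse $E=E\tilde{W}Z\tilde{W}B$ yielding $EY=EB^{\dag}$ and $ZY=ZB^{\dag}$ via Theorem~\ref{main1.2}, the working identities of the right-sided analogue of Theorem~\ref{Inverse 3.17}, the reduction $ZB^{\dag}B=Z$ from $\mathcal{N}(Z)=\mathcal{N}((\tilde{W}B)^{\tilde{k}})$ (needed here, unlike in Theorem~\ref{Inverse 3.19}, and which you correctly supply), and the Neumann-series bounds. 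The only caveat is that the printed statement contains dimension typos which your proof silently corrects: the middle expression should read $Z(I+\tilde{W}E\tilde{W}Z)^{-1}\tilde{W}\mathfrak{D}\tilde{W}Z\mathfrak{D}^{\dag}$ (or $(I+Z\tilde{W}E\tilde{W})^{-1}Z\tilde{W}\mathfrak{D}\tilde{W}Z\mathfrak{D}^{\dag}$), and the ``Furthermore'' left-hand side should end in $Z\mathfrak{D}^{\dag}\mathfrak{D}$ rather than $Z\tilde{W}\mathfrak{D}\tilde{W}\mathfrak{D}^{\dag}$, so that both sides equal $ZB^{\dag}B=Z$ as in your argument.
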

    If we substitute $X=B^{D,W}$ and $Z=B^{D,W}$ in Theorem \ref{Inverse 3.19} and Theorem \ref{Inverse 3.20}, which reduces to the perturbation formula for W-weighted MPD and W-weighted DMP inverses, respectively.
    \begin{corollary} \normalfont
        Let $B\in\mathbb{C}^{m\times{n}}$, $\tilde{W}\in\mathbb{C}^{n\times{m}}$ with $\tilde{k}=ind(B\tilde{W})$ and $\mathfrak{D}=B+E\in\mathbb{C}^{m\times{n}}$. If $\mathcal{R}(E) \subseteq \mathcal{R}(E\tilde{W})\subseteq \mathcal{R}((B\tilde{W})^{\tilde{k}})$, $\mathcal{R}(E^*)\subseteq \mathcal{R}((E\tilde{W})^*)\subseteq \mathcal{R}((X\tilde{W}B\tilde{W})^*) \subseteq \mathcal{R}(B^*)$  and $\max\{||\tilde{W}E\tilde{W}B^{D,W}||,||B^{\dag}E||\}{<}1$, then
        \begin{eqnarray*}
            \mathfrak{D}^{{\dag},D,W}&=&(I+B^{{\dag},D,W}E)^{-1}B^{{\dag},D,W}\\
            &=&B^{{\dag},D,W}(I+EB^{{\dag},D,W})^{-1}.
        \end{eqnarray*}
        In addition, $\mathfrak{D}\mathfrak{D}^{{\dag},D,W}=BB^{{\dag},D,W}$, $\mathfrak{D}^{{\dag},D,W}\mathfrak{D}=B^{{\dag},D,W}B$ and
\[
\frac{\left\|B^{{\dag},D,W}\right\|}{1 + \left\|B^{{\dag},D,W}E\right\|}
\;\leq\;
\left\|\mathfrak{D}^{{\dag},D,W}\right\|
\;\leq\;
\frac{\left\|B^{{\dag},D,W}\right\|}{1 - \left\|B^{{\dag},D,W}E\right\|}.
\]
\end{corollary}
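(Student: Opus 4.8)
The plan is to read this off as the specialization $X=B^{D,W}$ (resp.\ $Z=B^{D,W}$) of Theorem~\ref{Inverse 3.19} (resp.\ Theorem~\ref{Inverse 3.20}); I will treat the W-weighted MPD case, the DMP case being entirely dual. First I would record the structural consequences of the range hypotheses that are already extracted in the proof of Theorem~\ref{Inverse 3.19}: from $\mathcal{R}(E)\subseteq\mathcal{R}((B\tilde{W})^{\tilde{k}})\subseteq\mathcal{R}(B)$ one has $\mathfrak{D}=B+E=B(I+B^{\dag}E)$ with $I+B^{\dag}E$ invertible (since $\|B^{\dag}E\|<1$), whence $\mathfrak{D}^{\dag}=(I+B^{\dag}E)^{-1}B^{\dag}=B^{\dag}(I+EB^{\dag})^{-1}$, $\mathfrak{D}\mathfrak{D}^{\dag}=BB^{\dag}$ and $\mathfrak{D}^{\dag}\mathfrak{D}=B^{\dag}B$. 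A second point I would isolate at the outset is the identity $B^{\dag,D,W}E=B^{\dag}E$: using $B^{\dag,D,W}=B^{\dag}B\tilde{W}B^{D,W}\tilde{W}$ together with $E=B\tilde{W}B^{D,W}\tilde{W}E$ (the relation $E=B\tilde{W}X\tilde{W}E$ from the proof of Theorem~\ref{Inverse 3.19} at $X=B^{D,W}$) gives $B^{\dag,D,W}E=B^{\dag}(B\tilde{W}B^{D,W}\tilde{W}E)=B^{\dag}E$. Consequently $I+B^{\dag,D,W}E=I+B^{\dag}E$ is invertible and $\|B^{\dag,D,W}E\|=\|B^{\dag}E\|<1$, which is what the final norm estimate will require.

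The genuine content---and the step I expect to be the main obstacle---is to show that the perturbed inverse supplied by Theorem~\ref{Inverse 3.17} is not merely a weak Drazin inverse but the honest W-weighted Drazin inverse of $\mathfrak{D}$. With $X=B^{D,W}$, Theorem~\ref{Inverse 3.17} gives that $\widehat{X}:=B^{D,W}(I+\tilde{W}E\tilde{W}B^{D,W})^{-1}$ is a minimal rank W-weighted weak Drazin inverse of $\mathfrak{D}$; by Theorem~\ref{main1.1}(iii) it is automatically an outer inverse, $\widehat{X}\tilde{W}\mathfrak{D}\tilde{W}\widehat{X}=\widehat{X}$, with $\mathcal{R}(\widehat{X})=\mathcal{R}((\mathfrak{D}\tilde{W})^{\tilde{k}})$. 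I would then verify the one remaining defining property of the W-weighted Drazin inverse, the commutation $\mathfrak{D}\tilde{W}\widehat{X}=\widehat{X}\tilde{W}\mathfrak{D}$; this is precisely where the second family of range conditions is needed, since $\mathcal{R}(E\tilde{W})\subseteq\mathcal{R}((B\tilde{W})^{\tilde{k}})$ and $\mathcal{R}((E\tilde{W})^{*})\subseteq\mathcal{R}((B^{D,W}\tilde{W}B\tilde{W})^{*})$ yield the two-sided relations $E\tilde{W}=B\tilde{W}B^{D,W}\tilde{W}E\tilde{W}$ and $E\tilde{W}=E\tilde{W}B^{D,W}\tilde{W}B\tilde{W}$, i.e.\ $E\tilde{W}$ is absorbed by the spectral projector of $B\tilde{W}$ on both sides. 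This identifies $\mathfrak{D}^{D,W}=\widehat{X}=(I+B^{D,W}\tilde{W}E\tilde{W})^{-1}B^{D,W}$ after one push-through.

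With $\mathfrak{D}^{D,W}$ in hand, I would assemble the closed form from the definition $\mathfrak{D}^{\dag,D,W}=\mathfrak{D}^{\dag}\mathfrak{D}\tilde{W}\mathfrak{D}^{D,W}\tilde{W}$. Substituting $\mathfrak{D}^{\dag}\mathfrak{D}=B^{\dag}B$ turns the right-hand side into $B^{\dag}B\tilde{W}(I+B^{D,W}\tilde{W}E\tilde{W})^{-1}B^{D,W}\tilde{W}$, and the key is the intertwining $B^{\dag}B\tilde{W}(I+B^{D,W}\tilde{W}E\tilde{W})^{-1}=(I+B^{\dag,D,W}E)^{-1}B^{\dag}B\tilde{W}$, which after clearing inverses reduces to $B^{\dag,D,W}E(I-B^{\dag}B)=\mathbf{O}$ and is therefore guaranteed by $\mathcal{R}(E^{*})\subseteq\mathcal{R}(B^{*})$ (equivalently $E=EB^{\dag}B$) together with $B^{\dag,D,W}E=B^{\dag}E$. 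Collapsing with $B^{\dag}B\tilde{W}B^{D,W}\tilde{W}=B^{\dag,D,W}$ then yields $\mathfrak{D}^{\dag,D,W}=(I+B^{\dag,D,W}E)^{-1}B^{\dag,D,W}$, and a final push-through gives the equivalent form $B^{\dag,D,W}(I+EB^{\dag,D,W})^{-1}$. The two projector identities are now quick: writing $\mathfrak{D}=B(I+B^{\dag,D,W}E)$ gives $\mathfrak{D}\mathfrak{D}^{\dag,D,W}=B(I+B^{\dag,D,W}E)(I+B^{\dag,D,W}E)^{-1}B^{\dag,D,W}=BB^{\dag,D,W}$ at once, while $\mathfrak{D}^{\dag,D,W}\mathfrak{D}=B^{\dag,D,W}B$ follows after substituting $\mathfrak{D}=B+E$ and again invoking $E=EB^{\dag}B$ to discard the cross term.

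Finally, the two-sided norm bound is routine once the closed form is established and is carried out exactly as in Theorem~\ref{Inverse 3.19}. Since $\|B^{\dag,D,W}E\|<1$, the Neumann estimate gives $\|(I+B^{\dag,D,W}E)^{-1}\|\le(1-\|B^{\dag,D,W}E\|)^{-1}$, hence $\|\mathfrak{D}^{\dag,D,W}\|\le\|B^{\dag,D,W}\|/(1-\|B^{\dag,D,W}E\|)$; and rewriting $B^{\dag,D,W}=(I+B^{\dag,D,W}E)\mathfrak{D}^{\dag,D,W}$ and applying the triangle inequality produces the matching lower bound $\|B^{\dag,D,W}\|/(1+\|B^{\dag,D,W}E\|)\le\|\mathfrak{D}^{\dag,D,W}\|$. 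The only step carrying real weight is the promotion in the second paragraph, where both range conditions (not just the norm condition) are genuinely used.
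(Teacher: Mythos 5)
You are right that the paper's own justification for this corollary (the single sentence preceding it: substitute $X=B^{D,W}$ into Theorem~\ref{Inverse 3.19}) leaves a real gap, since Theorem~\ref{Inverse 3.19} only makes statements about $\mathfrak{D}^{\dag}X(I+\tilde{W}E\tilde{W}X)^{-1}\tilde{W}\mathfrak{D}\tilde{W}X$ and never identifies $\mathfrak{D}^{D,W}$; your plan to supply that identification is the correct reading of what is missing. However, your argument for the identification---the step you yourself flag as the main obstacle---fails as written. The relations you extract, $E\tilde{W}=B\tilde{W}B^{D,W}\tilde{W}E\tilde{W}$, $E\tilde{W}=E\tilde{W}B^{D,W}\tilde{W}B\tilde{W}$ and $E=B\tilde{W}B^{D,W}\tilde{W}E$, do \emph{not} imply the commutation $\mathfrak{D}\tilde{W}\widehat{X}=\widehat{X}\tilde{W}\mathfrak{D}$, nor $\widehat{X}=\mathfrak{D}^{D,W}$. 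Take $B=I_{2}$, $\tilde{W}=\left[\begin{smallmatrix}1&0\\0&0\end{smallmatrix}\right]$, $E=\left[\begin{smallmatrix}a&b\\0&0\end{smallmatrix}\right]$ with $a,b$ small and nonzero; then $B^{D,W}=\tilde{W}$ and all three relations hold, yet $\widehat{X}=\left[\begin{smallmatrix}(1+a)^{-1}&0\\0&0\end{smallmatrix}\right]$ while $\mathfrak{D}^{D,W}=[(\mathfrak{D}\tilde{W})^{D}]^{2}\mathfrak{D}=\left[\begin{smallmatrix}(1+a)^{-1}&b(1+a)^{-2}\\0&0\end{smallmatrix}\right]$, and $\mathfrak{D}\tilde{W}\widehat{X}=\left[\begin{smallmatrix}1&0\\0&0\end{smallmatrix}\right]\neq\left[\begin{smallmatrix}1&b(1+a)^{-1}\\0&0\end{smallmatrix}\right]=\widehat{X}\tilde{W}\mathfrak{D}$. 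What is missing is absorption of $E$ itself (not merely $E\tilde{W}$) on the right, $E=E\tilde{W}B^{D,W}\tilde{W}B$, i.e.\ $E(I-Q)=\mathbf{O}$ for the idempotent $Q=\tilde{W}B^{D,W}\tilde{W}B$; this is exactly what the one hypothesis you never invoke, $\mathcal{R}(E^{*})\subseteq\mathcal{R}((E\tilde{W})^{*})$, is meant to supply (it is violated in the example since $b\neq0$, and note it is only dimensionally meaningful when $m=n$). The example also shows the gap is not cosmetic: with $b\neq0$ the corollary's own conclusion $\mathfrak{D}^{\dag,D,W}\mathfrak{D}=B^{\dag,D,W}B$ is false, so no argument relying only on your listed relations can prove the full statement.

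The step is repairable in two ways. (i) First derive $E=E\tilde{W}B^{D,W}\tilde{W}B$ from the unused hypothesis; with that in hand your commutation computation closes exactly as in the classical unweighted case. (ii) Bypass the identification of $\mathfrak{D}^{D,W}$ entirely and prove only the weaker identity $\mathfrak{D}^{D,W}\tilde{W}=\widehat{X}\tilde{W}$, equivalently $(\mathfrak{D}\tilde{W})^{D}=B^{D,W}\tilde{W}\bigl(I+E\tilde{W}B^{D,W}\tilde{W}\bigr)^{-1}$; this \emph{is} a consequence of your $E\tilde{W}$-level relations (it is the classical Drazin perturbation theorem applied to the square matrix $B\tilde{W}+E\tilde{W}$), and it suffices for the displayed formula, for $\mathfrak{D}\mathfrak{D}^{\dag,D,W}=BB^{\dag,D,W}$, and for the norm bounds, because $\mathfrak{D}^{\dag,D,W}=\mathfrak{D}^{\dag}\mathfrak{D}\tilde{W}\mathfrak{D}^{D,W}\tilde{W}$ involves $\mathfrak{D}^{D,W}$ only through $\mathfrak{D}^{D,W}\tilde{W}$---though the remaining identity $\mathfrak{D}^{\dag,D,W}\mathfrak{D}=B^{\dag,D,W}B$ still requires route (i). The rest of your proposal (the reduction $B^{\dag,D,W}E=B^{\dag}E$, the assembly of the closed form via $\mathfrak{D}^{\dag}\mathfrak{D}=B^{\dag}B$ and push-through, and the Neumann bounds) is correct.
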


\begin{corollary} \normalfont
         Let $B\in\mathbb{C}^{m\times{n}}$, $\tilde{W}\in\mathbb{C}^{n\times{m}}$ with $\tilde{k}=ind(\tilde{W}B)$ and $\mathfrak{D}=B+E\in\mathbb{C}^{m\times{n}}$. If $\mathcal{R}(E)\subseteq \mathcal{R}(\tilde{W}E)\subseteq \mathcal{R}(\tilde{W}B\tilde{W}Z) \subseteq \mathcal{R}(B)$, $\mathcal{R}(E^*) \subseteq \mathcal{R}((\tilde{W}E)^*)\subseteq \mathcal{R}(((\tilde{W}B)^{\tilde{k}+1})^*)$ and $\max\{||B^{D,W}\tilde{W}B\tilde{W}||,||EB^{\dag}||\}{<}1$, then
        \begin{eqnarray*}
            \mathfrak{D}^{D,{\dag},W}&=&(I+B^{D,{\dag},W}E)^{-1}B^{D,{\dag},W}\\
            &=&B^{D,{\dag},W}(I+EB^{D,{\dag},W})^{-1}.
        \end{eqnarray*}
        In addition, $\mathfrak{D}\mathfrak{D}^{D,{\dag},W}=BB^{D,{\dag},W}$, $\mathfrak{D}^{D,{\dag},W}\mathfrak{D}=B^{D,{\dag},W}B$ and
\[
\frac{\left\|B^{D,{\dag},W}\right\|}{1 + \left\|EB^{D,{\dag},W}\right\|}
\;\leq\;
\left\|\mathfrak{D}^{D,{\dag},W}\right\|
\;\leq\;
\frac{\left\|B^{D,{\dag},W}\right\|}{1 - \left\|EB^{D,{\dag},W}\right\|}.
 \]
 \end{corollary}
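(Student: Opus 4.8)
The plan is to derive this as the specialization $Z=B^{D,W}$ of Theorem~\ref{Inverse 3.20}, mirroring how the preceding corollary specializes Theorem~\ref{Inverse 3.19}. First I would check that $B^{D,W}$ is itself a minimal rank $W$-weighted right weak Drazin inverse of $B$, since it satisfies $\tilde{W}(B\tilde{W})^{\tilde{k}+1}B^{D,W}=(\tilde{W}B)^{\tilde{k}}$ with $rank(B^{D,W})=rank((\tilde{W}B)^{\tilde{k}})$; Theorem~\ref{Inverse 3.20} then applies. With this choice the auxiliary matrix there is $Y=\tilde{W}B^{D,W}\tilde{W}BB^{\dag}=B^{D,\dag,W}$, and the perturbed minimal rank right weak Drazin inverse $(I+B^{D,W}\tilde{W}E\tilde{W})^{-1}B^{D,W}$ supplied by the theorem is in fact the full $W$-weighted Drazin inverse $\mathfrak{D}^{D,W}$ of $\mathfrak{D}$. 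Consequently the triple-product expressions of Theorem~\ref{Inverse 3.20} collapse to $\mathfrak{D}^{D,\dag,W}=\tilde{W}\mathfrak{D}^{D,W}\tilde{W}\mathfrak{D}\mathfrak{D}^{\dag}$.

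Next I would translate the range hypotheses into algebra. The inclusions $\mathcal{R}(E)\subseteq\mathcal{R}(B)$ and $\mathcal{R}(E^{*})\subseteq\mathcal{R}(B^{*})$ (the latter since $\mathcal{R}(((\tilde{W}B)^{\tilde{k}+1})^{*})\subseteq\mathcal{R}(B^{*})$) yield $E=BB^{\dag}E=EB^{\dag}B$, hence the rank-preserving factorizations $\mathfrak{D}=(I+EB^{\dag})B$ and $\mathfrak{D}^{\dag}=B^{\dag}(I+EB^{\dag})^{-1}$, legitimate because $\|EB^{\dag}\|<1$ makes $I+EB^{\dag}$ invertible. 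The innermost inclusions together with $Z=B^{D,W}$ give the absorption identity $E=E\tilde{W}B^{D,W}\tilde{W}B$, whence $EB^{D,\dag,W}=E\tilde{W}B^{D,W}\tilde{W}BB^{\dag}=EB^{\dag}$ and so $I+EB^{D,\dag,W}=I+EB^{\dag}$. Recalling from Lemma~\ref{main1.4} that $B^{D,\dag,W}$ is the outer inverse $B^{(2)}_{\mathcal{R}(\tilde{W}B^{D,W}),\,\mathcal{N}((\tilde{W}B)^{\tilde{k}+1}B^{\dag})}$, the corollary's first line is a multiplicative perturbation of this outer inverse; substituting the factorizations into $\mathfrak{D}^{D,\dag,W}=\tilde{W}\mathfrak{D}^{D,W}\tilde{W}\mathfrak{D}\mathfrak{D}^{\dag}$ gives $\mathfrak{D}^{D,\dag,W}=B^{D,\dag,W}(I+EB^{D,\dag,W})^{-1}$, and the twin form $(I+B^{D,\dag,W}E)^{-1}B^{D,\dag,W}$ follows from the push-through identity $(I+GE)^{-1}G=G(I+EG)^{-1}$, both factors being invertible since $I+GE$ is invertible exactly when $I+EG=I+EB^{\dag}$ is.

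The projection identities I would read off these factored forms. The relation $\mathfrak{D}^{D,\dag,W}\mathfrak{D}=B^{D,\dag,W}B$ telescopes immediately from $\mathfrak{D}^{D,\dag,W}=B^{D,\dag,W}(I+EB^{\dag})^{-1}$ and $\mathfrak{D}=(I+EB^{\dag})B$. For $\mathfrak{D}\mathfrak{D}^{D,\dag,W}=BB^{D,\dag,W}$ I would combine $\mathfrak{D}\mathfrak{D}^{\dag}=BB^{\dag}$ under the acute perturbation with the stability relation $\tilde{W}\mathfrak{D}\tilde{W}\mathfrak{D}^{D,W}=\tilde{W}B\tilde{W}B^{D,W}$ furnished by the companion of Theorem~\ref{Inverse 3.17} at $Z=B^{D,W}$. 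Finally the norm estimate is routine: from $\mathfrak{D}^{D,\dag,W}=B^{D,\dag,W}(I+EB^{D,\dag,W})^{-1}$ and $\|(I+EB^{D,\dag,W})^{-1}\|\le(1-\|EB^{D,\dag,W}\|)^{-1}$ I obtain the upper bound, and from $B^{D,\dag,W}=\mathfrak{D}^{D,\dag,W}(I+EB^{D,\dag,W})$ together with submultiplicativity and the reverse triangle inequality I obtain the lower bound.

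The step I expect to be the main obstacle is the claim in the first paragraph that the perturbed minimal rank right weak Drazin inverse truly upgrades to the full $W$-weighted Drazin inverse $\mathfrak{D}^{D,W}$, so that the triple products of Theorem~\ref{Inverse 3.20} reduce to $\mathfrak{D}^{D,\dag,W}$; equivalently, it is the stability of the range $\mathcal{R}(\tilde{W}B^{D,W})$ and the null space $\mathcal{N}((\tilde{W}B)^{\tilde{k}+1}B^{\dag})$ of the DMP inverse under the perturbation. This is precisely where the full nesting of the range inclusions is used rather than only $\mathcal{R}(E)\subseteq\mathcal{R}(B)$ and $\mathcal{R}(E^{*})\subseteq\mathcal{R}(B^{*})$, and it is also what secures the commutation needed in establishing $\mathfrak{D}\mathfrak{D}^{D,\dag,W}=BB^{D,\dag,W}$.
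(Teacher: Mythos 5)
Your overall route is the same as the paper's: the paper's entire ``proof'' of this corollary is the one-line remark preceding it, that one substitutes $Z=B^{D,W}$ into Theorem~\ref{Inverse 3.20}, and your supporting computations supply details the paper omits. Most of those details check out: $B^{D,W}$ is indeed a minimal rank $W$-weighted right weak Drazin inverse of $B$; the acute-perturbation identities $\mathfrak{D}^{\dag}=B^{\dag}(I+EB^{\dag})^{-1}=(I+B^{\dag}E)^{-1}B^{\dag}$, $\mathfrak{D}\mathfrak{D}^{\dag}=BB^{\dag}$, $\mathfrak{D}^{\dag}\mathfrak{D}=B^{\dag}B$ follow from $E=BB^{\dag}E=EB^{\dag}B$; the absorption identity $E=E\tilde{W}B^{D,W}\tilde{W}B$ (hence $EB^{D,\dag,W}=EB^{\dag}$) does follow from the starred inclusions, since $\tilde{W}E=D(\tilde{W}B)^{\tilde{k}+1}$ gives $\tilde{W}E\tilde{W}B^{D,W}\tilde{W}B=\tilde{W}E$ and $E=V\tilde{W}E$ transfers this to $E$; and the push-through and norm arguments are standard. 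However, the step you yourself flag as the ``main obstacle'' is a genuine gap, not a technicality. Theorem~\ref{Inverse 3.20} (via the companion of Theorem~\ref{Inverse 3.17}) guarantees only that $G=(I+B^{D,W}\tilde{W}E\tilde{W})^{-1}B^{D,W}$ is \emph{a} minimal rank $W$-weighted right weak Drazin inverse of $\mathfrak{D}$; such inverses are far from unique, so this alone never identifies $G$ with $\mathfrak{D}^{D,W}$. Since every conclusion of the corollary concerns $\mathfrak{D}^{D,\dag,W}=\tilde{W}\mathfrak{D}^{D,W}\tilde{W}\mathfrak{D}\mathfrak{D}^{\dag}$, which is defined through $\mathfrak{D}^{D,W}$, nothing in the corollary is reachable until $G=\mathfrak{D}^{D,W}$ is proved. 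Two further cautions: the displayed identities of Theorem~\ref{Inverse 3.20} carry an extra left factor $Z$, so after substitution they concern products of the form $B^{D,W}\cdot(\cdots)$ rather than $\mathfrak{D}^{D,\dag,W}$ itself (which is why literal substitution---all the paper offers---is also insufficient); and identities such as $\tilde{W}\mathfrak{D}\tilde{W}G=\tilde{W}B\tilde{W}B^{D,W}$ cannot have the leading $\tilde{W}$ cancelled, which undercuts your proposed derivation of $\mathfrak{D}\mathfrak{D}^{D,\dag,W}=BB^{D,\dag,W}$ as stated.

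The gap is closable from the stated hypotheses, and a complete proof must contain this argument. The inclusions give $\mathcal{R}(\tilde{W}E)\subseteq\mathcal{R}(\tilde{W}B\tilde{W}B^{D,W})=\mathcal{R}\bigl((\tilde{W}B)(\tilde{W}B)^{D}\bigr)=\mathcal{R}((\tilde{W}B)^{\tilde{k}})$ and $\mathcal{N}((\tilde{W}B)^{\tilde{k}})=\mathcal{N}((\tilde{W}B)^{\tilde{k}+1})\subseteq\mathcal{N}(\tilde{W}E)$, i.e. $(\tilde{W}B)(\tilde{W}B)^{D}\tilde{W}E=\tilde{W}E=\tilde{W}E(\tilde{W}B)(\tilde{W}B)^{D}$, so $\tilde{W}E$ lives entirely in the core of $\tilde{W}B$. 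Passing to the core-nilpotent decomposition of $\tilde{W}B$ and using that $I+(\tilde{W}B)^{D}\tilde{W}E$ is invertible (equivalent to invertibility of $I+B^{D,W}\tilde{W}E\tilde{W}$, because $\tilde{W}B^{D,W}=(\tilde{W}B)^{D}$ and the nonzero spectra of $\tilde{W}\cdot B^{D,W}\tilde{W}E$ and $B^{D,W}\tilde{W}E\cdot\tilde{W}$ coincide), one obtains $(\tilde{W}\mathfrak{D})^{D}=(I+(\tilde{W}B)^{D}\tilde{W}E)^{-1}(\tilde{W}B)^{D}$, and then $\mathfrak{D}^{D,W}=\mathfrak{D}[(\tilde{W}\mathfrak{D})^{D}]^{2}=(I+B^{D,W}\tilde{W}E\tilde{W})^{-1}B^{D,W}$. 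This is precisely the $W$-weighted Drazin perturbation theorem of \cite{Wei Y}, which the paper cites in the preamble of this subsection but never actually invokes; citing it (or reproving it as sketched) fills the hole, after which your absorption and push-through computations do deliver the corollary, including the projection identities and the norm bounds.
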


\subsection{Reverse and  Forward order law for minimal rank W-weighted weak Drazin inverse} 
 In generalized inverses, the forward and reverse order laws are fundamental concepts. Based on \cite{Greville21}, few sufficient conditions were first introduced for the reverse order law of the Moore-Penrose inverse, and later \cite{Castro} provides some results on the forward order law of the Moore-Penrose inverse. Both of these order laws were discussed in Drazin inverse \cite{Ben2, 2016}. In \cite{Wang H1} the triple reverse order law for the Drazin inverse was described. In this section, we explore these laws for W-weighted weak Drazin and minimal rank W-weighted weak Drazin inverses.
\begin{theorem}[\cite{Ben2}] \normalfont 
    Let $A,B\in\mathbb{C}^{m\times{n}}$, $\tilde{W}\in\mathbb{C}^{n\times{m}}$ with $\tilde{k}=\max \{ind(\tilde{W}A),ind(A\tilde{W})\}$ and $(A\tilde{W})(B\tilde{W})=(B\tilde{W})(A\tilde{W})$, then
    \begin{enumerate}
        \item [(a)] $(A\tilde{W})^D(B\tilde{W})=(B\tilde{W})(A\tilde{W})^D$ and $(B\tilde{W})^D(A\tilde{W})=(A\tilde{W})(B\tilde{W})^D$,
        \item [(b)] $[(A\tilde{W})(B\tilde{W})]^D=(B\tilde{W})^D(A\tilde{W})^D=(A\tilde{W})^D(B\tilde{W})^D$.
    \end{enumerate}
\end{theorem}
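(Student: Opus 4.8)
The plan is to recast the statement in terms of the two square matrices $P:=A\tilde{W}$ and $Q:=B\tilde{W}$ in $\mathbb{C}^{m\times{m}}$, whose sole hypothesis is $PQ=QP$, and then to exploit the classical fact that the Drazin inverse of a square matrix is a polynomial in that matrix. Concretely, I would first record the key lemma: there exist polynomials $p,q$ with $P^D=p(P)$ and $Q^D=q(Q)$. Either I cite this or sketch it via the core--nilpotent decomposition, writing $P$ as similar to $\begin{pmatrix}C&0\\0&N\end{pmatrix}$ with $C$ invertible and $N$ nilpotent of order $ind(P)$; one then seeks a polynomial reducing to $C^{-1}$ modulo the minimal polynomial of $C$ and to $0$ modulo $x^{ind(P)}$. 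Since $0$ is not an eigenvalue of $C$, these two moduli are coprime, so the Chinese Remainder Theorem produces the desired $p$, and $p(P)$ acts as $C^{-1}$ on the core part and as $0$ on the nilpotent part, i.e.\ $p(P)=P^D$.

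For part (a), the argument is then immediate. Since $Q$ commutes with $P$, it commutes with every power of $P$ and hence with every polynomial in $P$; applying this to $p$ gives $QP^D=P^DQ$. Interchanging the roles of $P$ and $Q$ (using $Q^D=q(Q)$ together with $PQ=QP$) yields $PQ^D=Q^DP$, which establishes both identities of (a).

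For part (b), I would first observe that the polynomial representations together with $PQ=QP$ force the four matrices $P,Q,P^D,Q^D$ to commute pairwise; in particular $P^DQ^D=Q^DP^D$, the second asserted equality. It then suffices to verify that $X:=P^DQ^D$ is the Drazin inverse of $M:=PQ$ by checking the three defining relations $XMX=X$, $MX=XM$, and $M^{s+1}X=M^s$ for the exponent $s:=\max\{ind(P),ind(Q)\}$. Regrouping factors freely by commutativity, each relation collapses to the Drazin identities for $P$ and $Q$ separately: $XMX=(P^DPP^D)(Q^DQQ^D)=P^DQ^D=X$; $MX=(PP^D)(QQ^D)=(P^DP)(Q^DQ)=XM$; and $M^{s+1}X=(P^{s+1}P^D)(Q^{s+1}Q^D)=P^sQ^s=M^s$, where $P^{s+1}P^D=P^s$ and $Q^{s+1}Q^D=Q^s$ hold because $s\ge ind(P)$ and $s\ge ind(Q)$. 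By uniqueness of the Drazin inverse I conclude $X=(PQ)^D$, which gives the first equality of (b).

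The main obstacle is the key lemma that $P^D$ is a polynomial in $P$; once it is in hand, parts (a) and (b) are routine rearrangements driven purely by commutativity. A secondary point requiring care is justifying that verifying $M^{s+1}X=M^s$ at the single exponent $s=\max\{ind(P),ind(Q)\}$, rather than at $ind(PQ)$, still identifies $X$ as $(PQ)^D$; this rests on the standard fact that the three Drazin relations determine the inverse uniquely whenever the power relation holds for some exponent that is at least $ind(M)$, which is guaranteed here since $ind(PQ)\le\max\{ind(P),ind(Q)\}$ for commuting $P$ and $Q$.
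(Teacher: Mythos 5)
Your proof is correct, but note that the paper itself offers no proof of this statement: it is quoted verbatim as a known result from the cited reference (Ben--Israel and Greville), so there is no internal argument to compare against. Your route is essentially the classical textbook one for that result: the key lemma that $P^D$ is a polynomial in $P$ (your core--nilpotent plus Chinese Remainder construction is sound, since the minimal polynomial of the invertible core block is coprime to $x^{ind(P)}$), commutativity of $Q$ with polynomials in $P$ for part (a), and direct verification of the three Drazin relations for $X=P^DQ^D$ with uniqueness for part (b). Your ``secondary point'' is also handled correctly, and in fact resolves itself: once you have verified $M^{s+1}X=M^s$ with $s=\max\{ind(P),ind(Q)\}$, this identity alone gives $\mathcal{R}(M^s)\subseteq\mathcal{R}(M^{s+1})$, hence $rank(M^s)=rank(M^{s+1})$ and $ind(M)\le s$, so no separate proof of $ind(PQ)\le\max\{ind(P),ind(Q)\}$ is needed (though that inequality does hold for commuting matrices, by the chain $(PQ)^{s+1}x=0\Rightarrow Q^{s+1}x\in\mathcal{N}(P^{s+1})=\mathcal{N}(P^s)\Rightarrow P^sx\in\mathcal{N}(Q^{s+1})=\mathcal{N}(Q^s)\Rightarrow(PQ)^sx=0$).
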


The next result is based on the reverse order law of the W-weighted weak Drazin inverse.
\begin{theorem}\label{Inverse 3.25} \normalfont
    Suppose $A,B\in\mathbb{C}^{m\times{n}}$, $W\in\mathbb{C}^{n\times{m}}$ with $(A\tilde{W})(B\tilde{W})=(B\tilde{W})(A\tilde{W})$ and $\tilde{k}=\max\{ind(A\tilde{W}),ind(B\tilde{W}),ind(A\tilde{W}B\tilde{W})\}$. If $X_1$ is the W-weighted weak Drazin inverse of $A\tilde{W}B$, then
     \[
     X_1=Y_2\tilde{W}Z_1,
     \] where $Y_2$ and $Z_1$ are the W-weighted weak Drazin inverses of $B$ and $A$.
     \begin{proof}
         Since $X_1$ is the W-weighted weak Drazin inverse of $A\tilde{W}B$, it satisfies the condition \[ X_1\tilde{W}(A\tilde{W}B\tilde{W})^{\tilde{k}+1}=(A\tilde{W}B\tilde{W})^{\tilde{k}}. \]
         Similarly, $Y_2$ and $Z_1$ are W-weighted weak Drazin inverse of $B$ and $A$, respectively, we have
         \[
         Y_2\tilde{W}(B\tilde{W})^{\tilde{k}+1}=(B\tilde{W})^{\tilde{k}}~\mbox{and}~Z_1\tilde{W}(A\tilde{W})^{\tilde{k}+1}=(A\tilde{W})^{\tilde{k}}.        \]
         Let us consider $U=Y_2\tilde{W}Z_1$. Then
         \begin{eqnarray*}
        U\tilde{W}(A\tilde{W}B\tilde{W})^{\tilde{k}+1}&=&Y_2\tilde{W}Z_1\tilde{W}(A\tilde{W}B\tilde{W})^{\tilde{k}+1}\\
        &=&Y_2\tilde{W}Z_1\tilde{W}(A\tilde{W})^{\tilde{k}+1}(B\tilde{W})^{\tilde{k}+1}\\
        &=&Y_2\tilde{W}(A\tilde{W})^{\tilde{k}}(B\tilde{W})^{\tilde{k}+1}\\
        &=&Y_2\tilde{W}(B\tilde{W})^{\tilde{k}+1}(A\tilde{W})^{\tilde{k}}\\
        &=&(B\tilde{W})^{\tilde{k}}(A\tilde{W})^{\tilde{k}}\\
        &=&(A\tilde{W})^{\tilde{k}}(B\tilde{W})^{\tilde{k}}\\
        &=&(A\tilde{W}B\tilde{W})^{\tilde{k}}.
    \end{eqnarray*}
    Hence, $U$ satisfying the property of the W-weighted weak Drazin inverse of $A\tilde{W}B$. Therefore, $X_1=Y_2\tilde{W}Z_1$.    
     \end{proof}
\end{theorem}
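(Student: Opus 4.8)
The plan is to read the asserted identity as the statement that the product $U:=Y_2\tilde{W}Z_1$ \emph{qualifies as} a W-weighted weak Drazin inverse of $A\tilde{W}B$; since such inverses are fixed only by a single equation (no rank condition is imposed on the plain weak Drazin inverse), this is the natural meaning of the conclusion and no uniqueness argument is needed. Writing $P=A\tilde{W}$ and $Q=B\tilde{W}$, the defining property to verify is
\[
U\tilde{W}(A\tilde{W}B\tilde{W})^{\tilde{k}+1}=(A\tilde{W}B\tilde{W})^{\tilde{k}},
\]
with $\tilde{k}=\max\{ind(A\tilde{W}),ind(B\tilde{W}),ind(A\tilde{W}B\tilde{W})\}$. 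First I would record the hypotheses on $Z_1$ and $Y_2$ in this common exponent, namely $Z_1\tilde{W}P^{\tilde{k}+1}=P^{\tilde{k}}$ and $Y_2\tilde{W}Q^{\tilde{k}+1}=Q^{\tilde{k}}$.

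A preliminary point to check is that these two relations are legitimately used with the single exponent $\tilde{k}$ rather than with the individual indices $ind(A\tilde{W})$ and $ind(B\tilde{W})$. This is routine: the weak Drazin equation persists under right multiplication by further powers, so if $Z_1\tilde{W}P^{\ell+1}=P^{\ell}$ at $\ell=ind(A\tilde{W})$, then post-multiplying by $P^{\tilde{k}-\ell}$ yields the same equation at exponent $\tilde{k}$, and likewise for $Y_2$. The choice $\tilde{k}\ge ind(A\tilde{W}B\tilde{W})$ simultaneously guarantees that the displayed target equation is indeed the defining equation of a W-weighted weak Drazin inverse of $A\tilde{W}B$.

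The core computation exploits commutativity. Since $PQ=QP$, all powers of $P$ and $Q$ commute, so $(PQ)^{j}=P^{j}Q^{j}$ for every $j$. Substituting $j=\tilde{k}+1$, applying the defining relation for $Z_1$, shuttling the factor $P^{\tilde{k}}$ past $Q^{\tilde{k}+1}$ by commutativity, and then applying the defining relation for $Y_2$, I would collapse
\begin{align*}
U\tilde{W}(PQ)^{\tilde{k}+1}
&=Y_2\tilde{W}Z_1\tilde{W}P^{\tilde{k}+1}Q^{\tilde{k}+1}
=Y_2\tilde{W}P^{\tilde{k}}Q^{\tilde{k}+1}\\
&=Y_2\tilde{W}Q^{\tilde{k}+1}P^{\tilde{k}}
=Q^{\tilde{k}}P^{\tilde{k}}
=(PQ)^{\tilde{k}},
\end{align*}
which is exactly the required identity. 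The only genuine obstacle is bookkeeping: ensuring that the factorization $(PQ)^{j}=P^{j}Q^{j}$ and each interchange of a $P$-power with a $Q$-power are justified solely by the hypothesis $(A\tilde{W})(B\tilde{W})=(B\tilde{W})(A\tilde{W})$, and that the common index $\tilde{k}$ dominates all three individual indices so that every weak Drazin equation may be invoked at exponent $\tilde{k}$ at once. Because no range or rank constraints are involved, verifying this single equation completes the argument.
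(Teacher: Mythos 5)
Your proposal is correct and takes essentially the same route as the paper: both define $U=Y_2\tilde{W}Z_1$, use the commutativity of $A\tilde{W}$ and $B\tilde{W}$ to factor $(A\tilde{W}B\tilde{W})^{\tilde{k}+1}=(A\tilde{W})^{\tilde{k}+1}(B\tilde{W})^{\tilde{k}+1}$, and apply the two weak Drazin relations in succession to collapse the product to $(A\tilde{W}B\tilde{W})^{\tilde{k}}$. Your two added remarks --- that the individual weak Drazin equations propagate to the common exponent $\tilde{k}$ by post-multiplication by further powers, and that the conclusion must be read as ``$U$ is a W-weighted weak Drazin inverse of $A\tilde{W}B$'' rather than as a literal equality (which could not follow, since weak Drazin inverses are not unique) --- tighten two points the paper's proof leaves implicit.
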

Subsequently, the forward order law for the W-weighted weak Drazin inverse follows is presented.
\begin{theorem}\normalfont
    Suppose $A,B\in\mathbb{C}^{m \times{n}}$ and $\tilde{W} \in\mathbb{C}^{n \times {m}}$ with  $(A\tilde{W})(B\tilde{W})=(B\tilde{W})(A\tilde{W})$. If $X_12$ is the W-weighted weak Drazin inverse of $A\tilde{W}B$, then $X_{12}=Z_1\tilde{W}Y_2$, where $Z_1$ and $Y_2$ are the W-weighted weak Drazin inverses of $A$ and $B$.
\end{theorem}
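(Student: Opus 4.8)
The plan is to mirror the proof of Theorem~\ref{Inverse 3.25}: since the W-weighted weak Drazin inverse of $A\tilde{W}B$ is characterized by the single relation $X_{12}\tilde{W}(A\tilde{W}B\tilde{W})^{\tilde{k}+1}=(A\tilde{W}B\tilde{W})^{\tilde{k}}$ (with $\tilde{k}=\max\{ind(A\tilde{W}),ind(B\tilde{W}),ind(A\tilde{W}B\tilde{W})\}$), it suffices to set $U=Z_1\tilde{W}Y_2$ and verify that $U$ satisfies this same relation; the decomposition $X_{12}=Z_1\tilde{W}Y_2$ then follows, exactly as the candidate $Y_2\tilde{W}Z_1$ was handled in the reverse-order case.

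Before the main computation I would record two preliminary facts. First, an index-lifting observation: if $Z_1\tilde{W}(A\tilde{W})^{ind(A\tilde{W})+1}=(A\tilde{W})^{ind(A\tilde{W})}$, then right-multiplying repeatedly by $A\tilde{W}$ gives $Z_1\tilde{W}(A\tilde{W})^{\tilde{k}+1}=(A\tilde{W})^{\tilde{k}}$ at the larger common exponent $\tilde{k}$, and likewise $Y_2\tilde{W}(B\tilde{W})^{\tilde{k}+1}=(B\tilde{W})^{\tilde{k}}$; this lets both hypotheses be applied at the single index $\tilde{k}$. Second, from $(A\tilde{W})(B\tilde{W})=(B\tilde{W})(A\tilde{W})$ all powers commute, so $(A\tilde{W}B\tilde{W})^{\tilde{k}+1}=(B\tilde{W})^{\tilde{k}+1}(A\tilde{W})^{\tilde{k}+1}$ and $(A\tilde{W})^{\tilde{k}}(B\tilde{W})^{\tilde{k}}=(A\tilde{W}B\tilde{W})^{\tilde{k}}$.

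The core computation then substitutes $U=Z_1\tilde{W}Y_2$ and reduces $U\tilde{W}(A\tilde{W}B\tilde{W})^{\tilde{k}+1}$ in stages: factor the right factor as $(B\tilde{W})^{\tilde{k}+1}(A\tilde{W})^{\tilde{k}+1}$, let $Y_2\tilde{W}$ collapse $(B\tilde{W})^{\tilde{k}+1}$ to $(B\tilde{W})^{\tilde{k}}$, commute this block past $(A\tilde{W})^{\tilde{k}+1}$, let $Z_1\tilde{W}$ collapse $(A\tilde{W})^{\tilde{k}+1}$ to $(A\tilde{W})^{\tilde{k}}$, and recombine $(A\tilde{W})^{\tilde{k}}(B\tilde{W})^{\tilde{k}}=(A\tilde{W}B\tilde{W})^{\tilde{k}}$. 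The step I expect to need the most care is precisely this intermediate commutation: after $Y_2$ has acted one is left with $Z_1\tilde{W}(B\tilde{W})^{\tilde{k}}(A\tilde{W})^{\tilde{k}+1}$, and the factor $(A\tilde{W})^{\tilde{k}+1}$ must be moved to the left of $(B\tilde{W})^{\tilde{k}}$ so that the defining relation of $Z_1$ can be invoked. This decoupling of the two weak Drazin inverses is the only place where commutativity is genuinely essential beyond the initial factorization, and it is the exact analogue of the reordering used in Theorem~\ref{Inverse 3.25}, with the roles of $A$ and $B$ interchanged to match the forward order $Z_1\tilde{W}Y_2$.
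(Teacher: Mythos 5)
Your proposal is correct and takes exactly the approach the paper intends: the paper states this forward order law without proof as the companion of Theorem~\ref{Inverse 3.25}, and your computation for $U=Z_1\tilde{W}Y_2$ (factor $(A\tilde{W}B\tilde{W})^{\tilde{k}+1}=(B\tilde{W})^{\tilde{k}+1}(A\tilde{W})^{\tilde{k}+1}$, collapse with $Y_2$, commute, collapse with $Z_1$, recombine) is precisely the mirror image of the paper's proof of that theorem. Your index-lifting remark, justifying the use of the single common exponent $\tilde{k}$ for both $Z_1$ and $Y_2$, is a detail the paper glosses over, but it does not change the route of the argument.
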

Based on Theorem \ref{Inverse 3.25}, we introduced the rank condition and explored the reverse order laws for minimal rank W-weighted weak Drazin inverse.
\begin{theorem}\label{Inverse 3.27} \normalfont
     Suppose $A,B\in\mathbb{C}^{m\times{n}}$, $\tilde{W}\in\mathbb{C}^{n\times{m}}$ with $(A\tilde{W})(B\tilde{W})=(B\tilde{W})(A\tilde{W})$ and $\tilde{k}=\max\{ind(A\tilde{W}),ind(B\tilde{W}),ind(A\tilde{W}B\tilde{W})\}$. Suppose $X_2$, $Y_3$, $Z_2$ are minimal rank W-weighted weak Drazin inverses of $A\tilde{W}B$, $B$, $A$, respectively and $(Y_3\tilde{W})(A\tilde{W})=(A\tilde{W})(Y_3\tilde{W})$. Then
     \[
     X_2=Y_3\tilde{W}Z_2.
     \]
\end{theorem}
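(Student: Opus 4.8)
The plan is to show that $U := Y_3\tilde{W}Z_2$ is itself a minimal rank $W$-weighted weak Drazin inverse of $A\tilde{W}B$, so that $X_2 = Y_3\tilde{W}Z_2$. By definition this requires two things: the weak Drazin relation $U\tilde{W}(A\tilde{W}B\tilde{W})^{\tilde{k}+1}=(A\tilde{W}B\tilde{W})^{\tilde{k}}$, and the rank identity $rank(U)=rank((A\tilde{W}B\tilde{W})^{\tilde{k}})$. The first is essentially the content of Theorem \ref{Inverse 3.25}, so the genuinely new work is the rank identity, which is exactly where the extra hypothesis $(Y_3\tilde{W})(A\tilde{W})=(A\tilde{W})(Y_3\tilde{W})$ is needed.

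First I would record the defining relations. Since $Y_3$ and $Z_2$ are minimal rank $W$-weighted weak Drazin inverses of $B$ and $A$ (taken with the common index $\tilde{k}$), Theorem \ref{main1.1} gives $Y_3\tilde{W}(B\tilde{W})^{\tilde{k}+1}=(B\tilde{W})^{\tilde{k}}$ with $\mathcal{R}(Y_3)=\mathcal{R}((B\tilde{W})^{\tilde{k}})$, and $Z_2\tilde{W}(A\tilde{W})^{\tilde{k}+1}=(A\tilde{W})^{\tilde{k}}$ with $\mathcal{R}(Z_2)=\mathcal{R}((A\tilde{W})^{\tilde{k}})$. The commutativity $(A\tilde{W})(B\tilde{W})=(B\tilde{W})(A\tilde{W})$ yields $(A\tilde{W}B\tilde{W})^{j}=(A\tilde{W})^{j}(B\tilde{W})^{j}$ for every $j$ and lets the factors $(A\tilde{W})$ and $(B\tilde{W})$ be interchanged freely. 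Substituting $(A\tilde{W}B\tilde{W})^{\tilde{k}+1}=(A\tilde{W})^{\tilde{k}+1}(B\tilde{W})^{\tilde{k}+1}$, peeling off $Z_2\tilde{W}(A\tilde{W})^{\tilde{k}+1}=(A\tilde{W})^{\tilde{k}}$, commuting $(A\tilde{W})^{\tilde{k}}$ past $(B\tilde{W})^{\tilde{k}+1}$, peeling off $Y_3\tilde{W}(B\tilde{W})^{\tilde{k}+1}=(B\tilde{W})^{\tilde{k}}$, and commuting once more reduces $U\tilde{W}(A\tilde{W}B\tilde{W})^{\tilde{k}+1}$ to $(A\tilde{W})^{\tilde{k}}(B\tilde{W})^{\tilde{k}}=(A\tilde{W}B\tilde{W})^{\tilde{k}}$; this is precisely the chain already carried out in Theorem \ref{Inverse 3.25}, so the weak Drazin relation holds.

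The rank identity is the main step. The relation just proved immediately gives $rank(U)\ge rank\!\left(U\tilde{W}(A\tilde{W}B\tilde{W})^{\tilde{k}+1}\right)=rank((A\tilde{W}B\tilde{W})^{\tilde{k}})$, so it suffices to prove $\mathcal{R}(U)\subseteq\mathcal{R}((A\tilde{W}B\tilde{W})^{\tilde{k}})$. Using Theorem \ref{main1.1}(iv) I would write $Z_2=(A\tilde{W})^{\tilde{k}}[(A\tilde{W})^{\tilde{k}}]^{\dag}Z_2=(A\tilde{W})^{\tilde{k}}S$ and $Y_3=(B\tilde{W})^{\tilde{k}}[(B\tilde{W})^{\tilde{k}}]^{\dag}Y_3=(B\tilde{W})^{\tilde{k}}T$, where $S=[(A\tilde{W})^{\tilde{k}}]^{\dag}Z_2$ and $T=[(B\tilde{W})^{\tilde{k}}]^{\dag}Y_3$. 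Then $U=Y_3\tilde{W}(A\tilde{W})^{\tilde{k}}S$, and iterating $(Y_3\tilde{W})(A\tilde{W})=(A\tilde{W})(Y_3\tilde{W})$ to $(Y_3\tilde{W})(A\tilde{W})^{\tilde{k}}=(A\tilde{W})^{\tilde{k}}(Y_3\tilde{W})$ migrates $(A\tilde{W})^{\tilde{k}}$ to the front, giving $U=(A\tilde{W})^{\tilde{k}}Y_3\tilde{W}S=(A\tilde{W})^{\tilde{k}}(B\tilde{W})^{\tilde{k}}T\tilde{W}S=(A\tilde{W}B\tilde{W})^{\tilde{k}}T\tilde{W}S$ after one more use of commutativity. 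Hence $\mathcal{R}(U)\subseteq\mathcal{R}((A\tilde{W}B\tilde{W})^{\tilde{k}})$, and combined with the reverse inequality we obtain $rank(U)=rank((A\tilde{W}B\tilde{W})^{\tilde{k}})$. Thus $U$ satisfies both defining properties, so $X_2=Y_3\tilde{W}Z_2$.

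I expect the rank identity to be the only real obstacle. Without the hypothesis $(Y_3\tilde{W})(A\tilde{W})=(A\tilde{W})(Y_3\tilde{W})$ one only controls $rank(U)\le\min\{rank((A\tilde{W})^{\tilde{k}}),rank((B\tilde{W})^{\tilde{k}})\}$, which is in general strictly larger than $rank((A\tilde{W}B\tilde{W})^{\tilde{k}})$; the precise role of that assumption is to let $(A\tilde{W})^{\tilde{k}}$ pass to the front so that the product collapses onto $\mathcal{R}((A\tilde{W}B\tilde{W})^{\tilde{k}})$. The routine points I would double-check are that the factorizations $Z_2=(A\tilde{W})^{\tilde{k}}S$ and $Y_3=(B\tilde{W})^{\tilde{k}}T$ are legitimate (they are, by Theorem \ref{main1.1}(iv) applied to $Z_2$ and $Y_3$) and that all powers are taken consistently with the common index $\tilde{k}=\max\{ind(A\tilde{W}),ind(B\tilde{W}),ind(A\tilde{W}B\tilde{W})\}$, for which the ranges and ranks of $(A\tilde{W})^{\tilde{k}}$, $(B\tilde{W})^{\tilde{k}}$ have already stabilized.
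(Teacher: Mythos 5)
Your proof is correct and follows essentially the same route as the paper's: verify that $Y_3\tilde{W}Z_2$ satisfies the weak Drazin relation $U\tilde{W}(A\tilde{W}B\tilde{W})^{\tilde{k}+1}=(A\tilde{W}B\tilde{W})^{\tilde{k}}$ via commutativity, then establish the rank condition and conclude. In fact your argument is slightly more complete than the paper's, which merely asserts the inclusion $\mathcal{R}(Y_3\tilde{W}Z_2)\subseteq\mathcal{R}((A\tilde{W}B\tilde{W})^{\tilde{k}})$ without showing where the hypothesis $(Y_3\tilde{W})(A\tilde{W})=(A\tilde{W})(Y_3\tilde{W})$ enters, whereas you make this explicit through the factorizations $Z_2=(A\tilde{W})^{\tilde{k}}S$, $Y_3=(B\tilde{W})^{\tilde{k}}T$ from Theorem \ref{main1.1}(iv).
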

\begin{proof}
    Since $X_2$, $Y_3$ and $Z_2$ are minimal rank W-weighted weak Drazin inverses of $A\tilde{W}B$, $B$ and $A$, we have
    \[ X_2\tilde{W}(A\tilde{W}B\tilde{W})^{\tilde{k}+1}=(A\tilde{W}B\tilde{W})^{\tilde{k}},~ rank(X_2)=rank((A\tilde{W}B\tilde{W})^{\tilde{k}}),\]
   \[ Y_3\tilde{W}(B\tilde{W})^{\tilde{k}+1}=(B\tilde{W})^{\tilde{k}},~rank(Y_3)=rank((B\tilde{W})^{\tilde{k}}), \]
    and \[ Z_2\tilde{W}(A\tilde{W})^{\tilde{k}+1}=(A\tilde{W})^{\tilde{k}},~rank(Z_2)=rank(A\tilde{W})^{\tilde{k}}. \]
    Let us define $Q=Y_3\tilde{W}Z_2$, then
    \begin{eqnarray*}
        Q\tilde{W}(A\tilde{W}B\tilde{W})^{\tilde{k}+1}&=&Y_3\tilde{W}Z_2\tilde{W}(A\tilde{W}B\tilde{W})^{\tilde{k}+1}\\
        &=&Y_3\tilde{W}Z_2\tilde{W}(A\tilde{W})^{\tilde{k}+1}(B\tilde{W})^{\tilde{k}+1}\\
        &=&Y_3\tilde{W}(A\tilde{W})^{\tilde{k}}(B\tilde{W})^{\tilde{k}+1}\\
        &=&Y_3\tilde{W}(B\tilde{W})^{\tilde{k}+1}(A\tilde{W})^{\tilde{k}}\\
        &=&(B\tilde{W})^{\tilde{k}}(A\tilde{W})^{\tilde{k}}\\
        &=&(A\tilde{W})^{\tilde{k}}(B\tilde{W})^{\tilde{k}}\\
        &=&(A\tilde{W}B\tilde{W})^{\tilde{k}}.
    \end{eqnarray*}
    Thus, \[
        \mathcal{R}((A\tilde{W}B\tilde{W})^{\tilde{k}}) \subseteq \mathcal{R}(Q)~\mbox{and}~\mathcal{R}(Q) \subseteq \mathcal{R}((A\tilde{W}B\tilde{W})^{\tilde{k}}),
        \]
        which implies \[ rank((A\tilde{W}B\tilde{W})^{\tilde{k}})=rank(Q). \]
        Therefore,\[ rank(X_2)=rank((A\tilde{W}B\tilde{W})^{\tilde{k}})=rank(Y_3\tilde{W}Z_2). \]
    Hence $X_2$ and $Y_3\tilde{W}Z_2$ satisfy the conditions of the minimal rank W-weighted weak Drazin inverse of $A\tilde{W}B$. It follows that \[ X_2=Y_3\tilde{W}Z_2.\]
\end{proof}
Next, we include the forward order law for minimal rank W-weighted weak Drazin inverse.
\begin{theorem}\label{Inverse 3.28} \normalfont
     Suppose $A,B\in\mathbb{C}^{m\times{n}}$, $\tilde{W}\in\mathbb{C}^{n\times{m}}$ with $(A\tilde{W})(B\tilde{W})=(B\tilde{W})(A\tilde{W})$ and $\tilde{k}=\max\{ind(A\tilde{W}),ind(B\tilde{W}),ind(A\tilde{W}B\tilde{W})\}$. Suppose $X_3$, $Y_3$, $Z_2$ are minimal rank W-weighted weak Drazin inverses of $A\tilde{W}B$, $B$, $A$, respectively and $(Z_2\tilde{W})(B\tilde{W})=(B\tilde{W})(Z_2\tilde{W})$. Then
     \[
     X_3=Z_2\tilde{W}Y_3.
     \]
\end{theorem}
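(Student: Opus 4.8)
The plan is to mirror the proof of Theorem~\ref{Inverse 3.27}, replacing the product $Y_3\tilde W Z_2$ by $Q=Z_2\tilde W Y_3$ and using the new commutativity hypothesis $(Z_2\tilde W)(B\tilde W)=(B\tilde W)(Z_2\tilde W)$ in place of $(Y_3\tilde W)(A\tilde W)=(A\tilde W)(Y_3\tilde W)$. First I would record the defining relations from Theorem~\ref{main1.1}: since $X_3$, $Y_3$, $Z_2$ are minimal rank W-weighted weak Drazin inverses of $A\tilde W B$, $B$, $A$, we have $X_3\tilde W(A\tilde W B\tilde W)^{\tilde k+1}=(A\tilde W B\tilde W)^{\tilde k}$ with $rank(X_3)=rank((A\tilde W B\tilde W)^{\tilde k})$, $Y_3\tilde W(B\tilde W)^{\tilde k+1}=(B\tilde W)^{\tilde k}$, and $Z_2\tilde W(A\tilde W)^{\tilde k+1}=(A\tilde W)^{\tilde k}$, together with the range identities $Y_3=(B\tilde W)^{\tilde k}[(B\tilde W)^{\tilde k}]^{\dag}Y_3$ and $Z_2=(A\tilde W)^{\tilde k}[(A\tilde W)^{\tilde k}]^{\dag}Z_2$ from part~(iv). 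I would also note that $(A\tilde W)(B\tilde W)=(B\tilde W)(A\tilde W)$ gives $(A\tilde W B\tilde W)^{j}=(A\tilde W)^{j}(B\tilde W)^{j}=(B\tilde W)^{j}(A\tilde W)^{j}$ for every $j$.

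The first key step is to verify that $Q=Z_2\tilde W Y_3$ satisfies the weak Drazin equation for $A\tilde W B$. Writing $(A\tilde W B\tilde W)^{\tilde k+1}=(B\tilde W)^{\tilde k+1}(A\tilde W)^{\tilde k+1}$, I would compute
\begin{align*}
Q\tilde W(A\tilde W B\tilde W)^{\tilde k+1}
&=Z_2\tilde W Y_3\tilde W(B\tilde W)^{\tilde k+1}(A\tilde W)^{\tilde k+1}\\
&=Z_2\tilde W(B\tilde W)^{\tilde k}(A\tilde W)^{\tilde k+1}\\
&=(B\tilde W)^{\tilde k}Z_2\tilde W(A\tilde W)^{\tilde k+1}\\
&=(B\tilde W)^{\tilde k}(A\tilde W)^{\tilde k}
=(A\tilde W B\tilde W)^{\tilde k},
\end{align*}
where the second equality uses $Y_3\tilde W(B\tilde W)^{\tilde k+1}=(B\tilde W)^{\tilde k}$, the third uses the hypothesis $(Z_2\tilde W)(B\tilde W)=(B\tilde W)(Z_2\tilde W)$ applied $\tilde k$ times, the fourth uses $Z_2\tilde W(A\tilde W)^{\tilde k+1}=(A\tilde W)^{\tilde k}$, and the last uses the commutativity of the powers.

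Next I would establish the rank condition through two range inclusions. From the identity just obtained, $\mathcal R((A\tilde W B\tilde W)^{\tilde k})\subseteq\mathcal R(Q)$. For the reverse inclusion I would rewrite $Q$ using the range identities: substituting $Y_3=(B\tilde W)^{\tilde k}[(B\tilde W)^{\tilde k}]^{\dag}Y_3$, commuting $Z_2\tilde W$ past $(B\tilde W)^{\tilde k}$ with the hypothesis, and then substituting $Z_2=(A\tilde W)^{\tilde k}[(A\tilde W)^{\tilde k}]^{\dag}Z_2$ yields
\[
Q=(B\tilde W)^{\tilde k}(A\tilde W)^{\tilde k}[(A\tilde W)^{\tilde k}]^{\dag}Z_2\tilde W[(B\tilde W)^{\tilde k}]^{\dag}Y_3
=(A\tilde W B\tilde W)^{\tilde k}S,
\]
where $S=[(A\tilde W)^{\tilde k}]^{\dag}Z_2\tilde W[(B\tilde W)^{\tilde k}]^{\dag}Y_3$, so that $\mathcal R(Q)\subseteq\mathcal R((A\tilde W B\tilde W)^{\tilde k})$. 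Hence $\mathcal R(Q)=\mathcal R((A\tilde W B\tilde W)^{\tilde k})$, which gives $rank(Q)=rank((A\tilde W B\tilde W)^{\tilde k})=rank(X_3)$. Thus $Q=Z_2\tilde W Y_3$ is a minimal rank W-weighted weak Drazin inverse of $A\tilde W B$, and identifying it with $X_3$ exactly as in Theorem~\ref{Inverse 3.27} gives $X_3=Z_2\tilde W Y_3$.

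I expect the main obstacle to be the bookkeeping of the two distinct commutativity relations. The factorization of the powers and the final reordering rely on $(A\tilde W)(B\tilde W)=(B\tilde W)(A\tilde W)$, whereas moving $Z_2\tilde W$ through $(B\tilde W)^{\tilde k}$ --- needed both in the equation check and in the reverse range inclusion --- relies specifically on the extra hypothesis $(Z_2\tilde W)(B\tilde W)=(B\tilde W)(Z_2\tilde W)$. Applying these in the correct order, and introducing the $[(\cdot)^{\tilde k}]^{\dag}$ factors via Theorem~\ref{main1.1}(iv) \emph{before} commuting, is the delicate part; the remainder is a direct transcription of the reverse-order-law argument.
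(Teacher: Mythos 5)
Your proposal is correct and follows essentially the same route as the paper: the paper states Theorem~\ref{Inverse 3.28} without a separate proof, as the symmetric analogue of Theorem~\ref{Inverse 3.27}, and your argument is a faithful transcription of that proof with $Q=Z_2\tilde{W}Y_3$ and the hypothesis $(Z_2\tilde{W})(B\tilde{W})=(B\tilde{W})(Z_2\tilde{W})$ playing the role that $(Y_3\tilde{W})(A\tilde{W})=(A\tilde{W})(Y_3\tilde{W})$ plays there. In fact you are slightly more careful than the paper, since you justify the inclusion $\mathcal{R}(Q)\subseteq\mathcal{R}((A\tilde{W}B\tilde{W})^{\tilde{k}})$ explicitly via Theorem~\ref{main1.1}(iv) and the commutation hypothesis, a step the paper's proof of Theorem~\ref{Inverse 3.27} asserts without detail.
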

In Theorem \ref{Inverse 3.27}, if we fix $X_2$, $Y_3$ and $Z_2$ as W-weighted Drazin inverses of the respective matrices, then we obtain the following result for the reverse order law.
\begin{corollary} \normalfont
Let us consider $A,B\in\mathbb{C}^{m\times{n}}$, $\tilde{W}\in\mathbb{C}^{n\times{m}}$ with $(A\tilde{W})(B\tilde{W})=(B\tilde{W})(A\tilde{W})$ and $\tilde{k}=\max\{ind(A\tilde{W}),ind(B\tilde{W}),ind(A\tilde{W}B\tilde{W})\}$. If $X_2=(A\tilde{W}B)^{D,W}$, $Y_3=B^{D,W}$ and $Z_2=A^{D,W}$ are W-weighted Drazin inverse of $A\tilde{W}B$, $B$, $A$, respectively and $(B^{D,W}\tilde{W})(A\tilde{W})=(A\tilde{W})(B^{D,W}\tilde{W})$, then
     \[
     (A\tilde{W}B)^{D,W}=B^{D,W}\tilde{W}A^{D,W}.
     \]
\end{corollary}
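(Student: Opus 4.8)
The plan is to specialize Theorem~\ref{Inverse 3.27}, following the remark that precedes the corollary. Writing $S:=A\tilde{W}$ and $T:=B\tilde{W}$, I would first record that each W-weighted Drazin inverse is a legitimate choice of minimal rank W-weighted weak Drazin inverse: from $B^{D,W}=[(B\tilde{W})^{D}]^{2}B$ one gets $B^{D,W}\tilde{W}=(T^{D})^{2}T=T^{D}$, whence $B^{D,W}\tilde{W}(B\tilde{W})^{\tilde{k}+1}=T^{D}T^{\tilde{k}+1}=T^{\tilde{k}}=(B\tilde{W})^{\tilde{k}}$, and the rank condition $rank(B^{D,W})=rank((B\tilde{W})^{\tilde{k}})$ holds; this is precisely the statement (from \cite{mos1}) that $B^{D,W}$ is a minimal rank W-weighted weak Drazin inverse. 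The same applies to $A^{D,W}$ and $(A\tilde{W}B)^{D,W}$. Moreover $B^{D,W}\tilde{W}=T^{D}$ commutes with $A\tilde{W}=S$ because $T$ commutes with $S$, so the extra hypothesis $(B^{D,W}\tilde{W})(A\tilde{W})=(A\tilde{W})(B^{D,W}\tilde{W})$ is automatically met. Taking $X_{2}=(A\tilde{W}B)^{D,W}$, $Y_{3}=B^{D,W}$, $Z_{2}=A^{D,W}$ in Theorem~\ref{Inverse 3.27} then makes $B^{D,W}\tilde{W}A^{D,W}$ a minimal rank W-weighted weak Drazin inverse of $A\tilde{W}B$.

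The step that needs care is passing from ``$B^{D,W}\tilde{W}A^{D,W}$ is a minimal rank W-weighted weak Drazin inverse of $A\tilde{W}B$'' to the equality ``$B^{D,W}\tilde{W}A^{D,W}=(A\tilde{W}B)^{D,W}$''. A minimal rank W-weighted weak Drazin inverse is an outer inverse with prescribed range but unconstrained null space, and is therefore \emph{not} unique (the example in this paper already exhibits the free parameters $x_{1},x_{2}$). Consequently Theorem~\ref{Inverse 3.27} by itself does not identify the product with the \emph{unique} object $(A\tilde{W}B)^{D,W}$; I would instead verify directly that $Y:=B^{D,W}\tilde{W}A^{D,W}$ satisfies all three defining equations of the W-weighted Drazin inverse of $A\tilde{W}B$, in particular $(A\tilde{W}B)\tilde{W}Y=Y\tilde{W}(A\tilde{W}B)$ and $Y\tilde{W}(A\tilde{W}B)\tilde{W}Y=Y$. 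The natural tool is the commuting-case reverse order law for the ordinary Drazin inverse recorded earlier in this section (from \cite{Ben2}): since $ST=TS$, one has $(ST)^{D}=T^{D}S^{D}=S^{D}T^{D}$. Using $B^{D,W}\tilde{W}=T^{D}$ and $A^{D,W}\tilde{W}=S^{D}$ this gives $Y\tilde{W}=T^{D}S^{D}=(ST)^{D}=(A\tilde{W}B)^{D,W}\tilde{W}$, so the two candidates at least agree after right multiplication by $\tilde{W}$.

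The genuine obstacle is removing this trailing $\tilde{W}$. When $\tilde{W}$ is singular, $Y\tilde{W}=(A\tilde{W}B)^{D,W}\tilde{W}$ need not force $Y=(A\tilde{W}B)^{D,W}$: unwinding the representations $Y=B^{D,W}\tilde{W}A^{D,W}=T^{D}(S^{D})^{2}A$ and $(A\tilde{W}B)^{D,W}=[(ST)^{D}]^{2}A\tilde{W}B=S^{D}(T^{D})^{2}B$, equality of the full matrices is equivalent to the weighted commutativity $A^{D,W}\tilde{W}B^{D,W}=B^{D,W}\tilde{W}A^{D,W}$, which is \emph{not} a consequence of the stated hypotheses. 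Before committing to the argument I would therefore test a small instance with singular $\tilde{W}$, and I expect this to reveal that $B^{D,W}\tilde{W}A^{D,W}$ and $(A\tilde{W}B)^{D,W}$ can genuinely differ (the product realizing a different minimal rank W-weighted weak Drazin inverse of $A\tilde{W}B$), whereas the \emph{forward} combination $A^{D,W}\tilde{W}B^{D,W}=S^{D}(T^{D})^{2}B$ does coincide with $(A\tilde{W}B)^{D,W}$. I would accordingly expect to need either the weighted-commutativity assumption $A^{D,W}\tilde{W}B^{D,W}=B^{D,W}\tilde{W}A^{D,W}$ (equivalently, $\tilde{W}$ acting injectively on the relevant range), or a reformulation in terms of the forward order law; pinning down this point is where I expect the claim to stand or fall, and it is precisely the place where ``both are minimal rank weak Drazin inverses, hence equal'' must be scrutinized.
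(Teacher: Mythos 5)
Your diagnosis is exactly right, and the step you refuse to accept is precisely where the paper's own proof lives and fails. The paper justifies this corollary solely by the remark preceding it: fix $X_2=(A\tilde{W}B)^{D,W}$, $Y_3=B^{D,W}$, $Z_2=A^{D,W}$ in Theorem \ref{Inverse 3.27}. But the proof of Theorem \ref{Inverse 3.27} ends by deducing $X_2=Y_3\tilde{W}Z_2$ from the mere fact that $X_2$ and $Y_3\tilde{W}Z_2$ both satisfy the defining conditions of a minimal rank W-weighted weak Drazin inverse of $A\tilde{W}B$; since such inverses are outer inverses with prescribed range but unconstrained null space, they are not unique, and this inference is unsound. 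Specializing an unsound theorem does not repair it, so the corollary has no valid proof in the paper, and your objection is not a pedantic one.

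Indeed, your prediction that the claim itself fails is correct, and the matrices $A$, $B$, $\tilde{W}$ of the paper's own example (the one placed between Theorem \ref{Inverse 3.28} and this corollary) already provide a counterexample. Write $e_1=(1,0,0,0,0)^{T}\in\mathbb{C}^{5}$ and, for a row vector $v$, let $e_1v$ denote the rank-one matrix whose first row is $v$ and whose remaining rows are zero. For those matrices one has $A\tilde{W}=B\tilde{W}=A\tilde{W}B\tilde{W}=e_1(1,1,0,1,1)$, which is idempotent, so all index and commutativity hypotheses of the corollary hold; in particular $B^{D,W}\tilde{W}=A\tilde{W}$, so $(B^{D,W}\tilde{W})(A\tilde{W})=(A\tilde{W})(B^{D,W}\tilde{W})$ is automatic. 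Computing from $M^{D,W}=[(M\tilde{W})^{D}]^{2}M$ gives
\[
A^{D,W}=e_1(1,1,2,1),\qquad B^{D,W}=(A\tilde{W}B)^{D,W}=e_1(1,1,1,0),
\]
and therefore
\[
B^{D,W}\tilde{W}A^{D,W}=e_1(1,1,2,1)\neq e_1(1,1,1,0)=(A\tilde{W}B)^{D,W}=A^{D,W}\tilde{W}B^{D,W}.
\]
This instance confirms every point of your analysis: $B^{D,W}\tilde{W}A^{D,W}$ is a minimal rank W-weighted weak Drazin inverse of $A\tilde{W}B$ but not the W-weighted Drazin inverse; the two candidates do agree after right multiplication by $\tilde{W}$ (both products equal $e_1(1,1,0,1,1)$, matching your identity $Y\tilde{W}=(ST)^{D}=(A\tilde{W}B)^{D,W}\tilde{W}$); and it is the forward product $A^{D,W}\tilde{W}B^{D,W}=S^{D}(T^{D})^{2}B$, not the reverse product $T^{D}(S^{D})^{2}A$, that coincides with $(A\tilde{W}B)^{D,W}$ under the stated hypotheses. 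As you anticipated, the reverse-order statement becomes true only after adding the weighted commutativity $A^{D,W}\tilde{W}B^{D,W}=B^{D,W}\tilde{W}A^{D,W}$ (or some hypothesis forcing it), which the corollary does not assume.
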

Similarly, we present the forward order law by considering the W-weighted Drazin inverse of the appropriate matrices in Theorem \ref{Inverse 3.28}.
\begin{corollary}\normalfont
    Suppose $A,B\in\mathbb{C}^{m\times{n}}$, $\tilde{W}\in\mathbb{C}^{n\times{m}}$ with $(A\tilde{W})(B\tilde{W})=(B\tilde{W})(A\tilde{W})$ and $\tilde{k}=\max\{ind(A\tilde{W}),ind(B\tilde{W}),ind(A\tilde{W}B\tilde{W})\}$. Suppose we fix $X_3=(A\tilde{W}B)^{D,W}$, $Y_3=B^{D,W}$ and $Z_2=A^{D,W}$ are W-weighted Drazin inverses of $A\tilde{W}B$, $B$, and $A$, respectively. If $(A^{D,W}\tilde{W})(B\tilde{W})=(B\tilde{W})(A^{D,W}\tilde{W})$, then
     \[
     (A\tilde{W}B)^{D,W}=A^{D,W}\tilde{W}B^{D,W}.
     \]
\end{corollary}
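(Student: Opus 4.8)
The plan is to obtain this corollary as a direct specialization of Theorem~\ref{Inverse 3.28}, exploiting the fact (recorded in the introduction and established in \cite{mos1}) that every W-weighted Drazin inverse is in particular a minimal rank W-weighted weak Drazin inverse. The reverse-order corollary proved immediately above, via Theorem~\ref{Inverse 3.27}, is the exact template; the only changes here are the order of the factors and the correspondingly placed commutativity assumption.

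First I would check that the three W-weighted Drazin inverses $(A\tilde{W}B)^{D,W}$, $B^{D,W}$ and $A^{D,W}$ qualify as minimal rank W-weighted weak Drazin inverses of $A\tilde{W}B$, $B$ and $A$ relative to the common index $\tilde{k}=\max\{ind(A\tilde{W}),ind(B\tilde{W}),ind(A\tilde{W}B\tilde{W})\}$. For $B^{D,W}$, using the identity $B^{D,W}=[(B\tilde{W})^D]^2B$ one gets $B^{D,W}\tilde{W}=(B\tilde{W})^D$, whence $B^{D,W}\tilde{W}(B\tilde{W})^{\tilde{k}+1}=(B\tilde{W})^D(B\tilde{W})^{\tilde{k}+1}=(B\tilde{W})^{\tilde{k}}$. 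The rank equality then follows since $rank((B\tilde{W})^{\tilde{k}})=rank(B^{D,W}\tilde{W})\le rank(B^{D,W})\le rank([(B\tilde{W})^D]^2)=rank((B\tilde{W})^{\tilde{k}})$, forcing equality throughout. The identical computation applies to $A^{D,W}$ and to $(A\tilde{W}B)^{D,W}$ (the latter with $(A\tilde{W}B)^{D,W}\tilde{W}=(A\tilde{W}B\tilde{W})^D$), so all three are legitimate minimal rank W-weighted weak Drazin inverses; using the enlarged common $\tilde{k}$ is harmless, since the defining equation $X\tilde{W}(C\tilde{W})^{\tilde{k}+1}=(C\tilde{W})^{\tilde{k}}$ persists for every exponent at or above the individual index.

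Next I would match the commutativity hypotheses. Theorem~\ref{Inverse 3.28} requires $(Z_2\tilde{W})(B\tilde{W})=(B\tilde{W})(Z_2\tilde{W})$ with $Z_2$ the inverse attached to $A$; taking $Z_2=A^{D,W}$ this is precisely the stated assumption $(A^{D,W}\tilde{W})(B\tilde{W})=(B\tilde{W})(A^{D,W}\tilde{W})$, while $(A\tilde{W})(B\tilde{W})=(B\tilde{W})(A\tilde{W})$ is common to both statements. With all hypotheses of Theorem~\ref{Inverse 3.28} in force and $X_3=(A\tilde{W}B)^{D,W}$, $Y_3=B^{D,W}$, $Z_2=A^{D,W}$, the conclusion $X_3=Z_2\tilde{W}Y_3$ reads exactly as $(A\tilde{W}B)^{D,W}=A^{D,W}\tilde{W}B^{D,W}$, which is the claim.

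The step I expect to be least automatic is verifying the rank side of the minimal rank condition with respect to the enlarged common index (the sandwich estimate above); everything else is pattern-matching the hypotheses of Theorem~\ref{Inverse 3.28}. Since that theorem is already available, no independent verification of the order-law identity itself is needed once the specialization is justified.
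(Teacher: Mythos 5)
Your proposal is correct and follows essentially the same route as the paper: the corollary is obtained by specializing Theorem~\ref{Inverse 3.28} with $X_3=(A\tilde{W}B)^{D,W}$, $Y_3=B^{D,W}$, $Z_2=A^{D,W}$, with the hypothesis $(A^{D,W}\tilde{W})(B\tilde{W})=(B\tilde{W})(A^{D,W}\tilde{W})$ playing the role of $(Z_2\tilde{W})(B\tilde{W})=(B\tilde{W})(Z_2\tilde{W})$. Your explicit verification that each W-weighted Drazin inverse is a minimal rank W-weighted weak Drazin inverse relative to the enlarged common index (via $B^{D,W}\tilde{W}=(B\tilde{W})^D$ and the rank sandwich) is a detail the paper leaves implicit, citing only the general fact from \cite{mos1}, but it does not change the approach.
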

\begin{itemize}
    \item In a similar way, we can establish the reverse order law for W-weighted core-EP inverse $A^{\circleddagger,W}$ and the W-weighted m-WGI $A^{\textcircled{w}_m,W}$.
\end{itemize}

    Further, we explore the triple reverse order law for minimal rank W-weighted weak Drazin inverse. 
\begin{theorem}\label{Inverse 3.31} \normalfont
    Let $A,B,C\in\mathbb{C}^{m\times{n}}$, $\tilde{W}\in\mathbb{C}^{n\times{m}}$ with $A\tilde{W}$, $B\tilde{W}$, $C\tilde{W}$ are commuting with each other and $\tilde{k}=\max\{ind(A\tilde{W}),ind(B\tilde{W}),ind(C\tilde{W}),ind(A\tilde{W}B\tilde{W}C\tilde{W})\}$. Suppose $X_4$, $Y_4$, $Z_3$ and $U_1$ are minimal rank W-weighted weak Drazin inverses of $A\tilde{W}B\tilde{W}C$, $B$, $A$ and $C$, respectively, with $(U_1\tilde{W})(A\tilde{W}B\tilde{W})=(A\tilde{W}B\tilde{W})(U_1\tilde{W})$. Then
     \[
     X_4=U_1\tilde{W}Y_4\tilde{W}Z_3.
     \]
\end{theorem}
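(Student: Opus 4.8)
The plan is to imitate the two-factor reverse order law of Theorem \ref{Inverse 3.27}: put $Q = U_1\tilde{W}Y_4\tilde{W}Z_3$ and verify that $Q$ meets the two defining requirements of a minimal rank $W$-weighted weak Drazin inverse of $A\tilde{W}B\tilde{W}C$, namely the weak Drazin equation $Q\tilde{W}(A\tilde{W}B\tilde{W}C\tilde{W})^{\tilde{k}+1}=(A\tilde{W}B\tilde{W}C\tilde{W})^{\tilde{k}}$ together with $rank(Q)=rank((A\tilde{W}B\tilde{W}C\tilde{W})^{\tilde{k}})$. Once both hold, $Q$ and $X_4$ satisfy the same characterizing conditions of Theorem \ref{main1.1}, and $X_4 = U_1\tilde{W}Y_4\tilde{W}Z_3$ follows exactly as in Theorem \ref{Inverse 3.27}.

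First I would record the defining relations $Z_3\tilde{W}(A\tilde{W})^{\tilde{k}+1}=(A\tilde{W})^{\tilde{k}}$, $Y_4\tilde{W}(B\tilde{W})^{\tilde{k}+1}=(B\tilde{W})^{\tilde{k}}$, $U_1\tilde{W}(C\tilde{W})^{\tilde{k}+1}=(C\tilde{W})^{\tilde{k}}$ with their rank equalities. The weak Drazin equation is the core computation. Since $A\tilde{W},B\tilde{W},C\tilde{W}$ pairwise commute, I factor $(A\tilde{W}B\tilde{W}C\tilde{W})^{\tilde{k}+1}=(A\tilde{W})^{\tilde{k}+1}(B\tilde{W})^{\tilde{k}+1}(C\tilde{W})^{\tilde{k}+1}$ and peel the factors one at a time: $Z_3\tilde{W}$ collapses $(A\tilde{W})^{\tilde{k}+1}$ to $(A\tilde{W})^{\tilde{k}}$; commutativity then slides $(A\tilde{W})^{\tilde{k}}$ past $(B\tilde{W})^{\tilde{k}+1}$ so that $Y_4\tilde{W}$ collapses $(B\tilde{W})^{\tilde{k}+1}$ to $(B\tilde{W})^{\tilde{k}}$. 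This leaves $U_1\tilde{W}(A\tilde{W}B\tilde{W})^{\tilde{k}}(C\tilde{W})^{\tilde{k}+1}$, and the hypothesis $(U_1\tilde{W})(A\tilde{W}B\tilde{W})=(A\tilde{W}B\tilde{W})(U_1\tilde{W})$ moves $(A\tilde{W}B\tilde{W})^{\tilde{k}}$ to the left; finally $U_1\tilde{W}$ collapses $(C\tilde{W})^{\tilde{k}+1}$ to $(C\tilde{W})^{\tilde{k}}$, yielding $(A\tilde{W}B\tilde{W})^{\tilde{k}}(C\tilde{W})^{\tilde{k}}=(A\tilde{W}B\tilde{W}C\tilde{W})^{\tilde{k}}$. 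This already gives $\mathcal{R}((A\tilde{W}B\tilde{W}C\tilde{W})^{\tilde{k}})\subseteq\mathcal{R}(Q)$, half of the rank identity.

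For the reverse inclusion $\mathcal{R}(Q)\subseteq\mathcal{R}((A\tilde{W}B\tilde{W}C\tilde{W})^{\tilde{k}})$ I would introduce the spectral projection $P=(A\tilde{W}B\tilde{W}C\tilde{W})^{D}(A\tilde{W}B\tilde{W}C\tilde{W})$, whose range is precisely $\mathcal{R}((A\tilde{W}B\tilde{W}C\tilde{W})^{\tilde{k}})$, and aim to show $PQ=Q$. Because the three factors commute, the commuting-Drazin identities stated just before Theorem \ref{Inverse 3.25} give $P=P_AP_BP_C$ with $P_A=(A\tilde{W})^{D}(A\tilde{W})$, $P_B=(B\tilde{W})^{D}(B\tilde{W})$, $P_C=(C\tilde{W})^{D}(C\tilde{W})$ pairwise commuting. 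By Theorem \ref{main1.1}(vii), $P_CU_1=U_1$, $P_BY_4=Y_4$, $P_AZ_3=Z_3$, and the $U_1$-commutativity lets $P_AP_B$ pass through $U_1\tilde{W}$. These reductions collapse $PQ$ to $U_1\tilde{W}P_AY_4\tilde{W}Z_3$, and it then remains only to push the single projection $P_A$ past $Y_4\tilde{W}$ to reach $Z_3$, where $P_AZ_3=Z_3$ closes the argument; the equality of ranks follows.

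I expect this last commutation to be the main obstacle. In the two-factor Theorem \ref{Inverse 3.27} the hypothesis $(Y_3\tilde{W})(A\tilde{W})=(A\tilde{W})(Y_3\tilde{W})$ is exactly what drives $P_A$ through the middle factor $Y_3\tilde{W}$; the present three-factor product leaves the residual $U_1\tilde{W}P_AY_4\tilde{W}Z_3$, whose collapse requires commuting $P_A$ (hence $A\tilde{W}$) past $Y_4\tilde{W}$. I would therefore need, and would state explicitly, the analogue $(Y_4\tilde{W})(A\tilde{W})=(A\tilde{W})(Y_4\tilde{W})$ alongside the stated $U_1$-commutativity, after which $P_AY_4\tilde{W}Z_3=Y_4\tilde{W}P_AZ_3=Y_4\tilde{W}Z_3$ completes the reverse inclusion. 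The delicate part is thus the bookkeeping of the spectral projections interleaved with the weight factors $\tilde{W}$ in the rank step; the weak Drazin equation itself goes through cleanly from the stated hypotheses.
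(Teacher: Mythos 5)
Your proposal is correct, and its skeleton matches the paper's proof: set $Q=U_1\tilde{W}Y_4\tilde{W}Z_3$, establish the weak Drazin equation $Q\tilde{W}(A\tilde{W}B\tilde{W}C\tilde{W})^{\tilde{k}+1}=(A\tilde{W}B\tilde{W}C\tilde{W})^{\tilde{k}}$, establish the rank equality, and identify $Q$ with $X_4$. The weak Drazin computation is the same in substance (you unroll all three factors; the paper compresses the first two peels by citing Theorem~\ref{Inverse 3.27} for $Y_4\tilde{W}Z_3\tilde{W}(A\tilde{W}B\tilde{W})^{\tilde{k}+1}=(A\tilde{W}B\tilde{W})^{\tilde{k}}$). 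Where you genuinely diverge is the rank step: after the weak Drazin equation, the paper simply asserts the inclusion $\mathcal{R}(Q_1)\subseteq\mathcal{R}((A\tilde{W}B\tilde{W}C\tilde{W})^{\tilde{k}})$ with no argument, whereas you actually prove it, using the spectral projections $P_A=(A\tilde{W})^D(A\tilde{W})$, $P_B=(B\tilde{W})^D(B\tilde{W})$, $P_C=(C\tilde{W})^D(C\tilde{W})$, Theorem~\ref{main1.1}(vii), and the fact that $U_1\tilde{W}$ commutes with $(A\tilde{W}B\tilde{W})^D(A\tilde{W}B\tilde{W})=P_AP_B$. This extra care pays off: you correctly detect that the final collapse $P_AY_4\tilde{W}Z_3=Y_4\tilde{W}Z_3$ requires the additional hypothesis $(Y_4\tilde{W})(A\tilde{W})=(A\tilde{W})(Y_4\tilde{W})$, which is absent from the statement of Theorem~\ref{Inverse 3.31}. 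This is not an artifact of your method: the paper needs the same hypothesis and imports it silently, since its opening line ``In Theorem~\ref{Inverse 3.27}, we know that $X_2=Y_4\tilde{W}Z_3$'' is legitimate only under the hypotheses of Theorem~\ref{Inverse 3.27}, which include exactly the commutativity you flagged. The one point you share with the paper rather than improve on is the last step: concluding $X_4=Q$ because both satisfy the two defining conditions tacitly treats the minimal rank W-weighted weak Drazin inverse as unique, a convention the paper uses throughout (e.g.\ in Theorems~\ref{Inverse 3.25} and \ref{Inverse 3.27}), so your proposal is no weaker than the paper's proof on that account.
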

\begin{proof}
    In Theorem \ref{Inverse 3.27}, we know that $X_2=Y_4\tilde{W}Z_3$.
    Since $X_4$, $U_1$ are minimal rank W-weighted weak Drazin inverse of $A\tilde{W}B\tilde{W}C$, $C$, we have
    \[
    X_4\tilde{W}(A\tilde{W}B\tilde{W}C\tilde{W})^{\tilde{k}+1}=(A\tilde{W}B\tilde{W}C\tilde{W})^{\tilde{k}},~ rank(X_3)=rank((A\tilde{W}B\tilde{W}C\tilde{W})^{\tilde{k}})
    \] and
    \[
    U_1\tilde{W}(C\tilde{W})^{\tilde{k}+1}=(C\tilde{W})^{\tilde{k}},~rank(U_1)=rank((C\tilde{W})^{\tilde{k}}).
    \]Consider $Q_1=U_1\tilde{W}Y_4\tilde{W}Z_3$. Then
    \begin{eqnarray*}
        Q_1\tilde{W}(A\tilde{W}B\tilde{W}C\tilde{W})^{\tilde{k}+1}&=&U_1\tilde{W}Y_4\tilde{W}Z_3\tilde{W}(A\tilde{W}B\tilde{W}C\tilde{W})^{\tilde{k}+1}\\
        &=&U_1\tilde{W}Y_4\tilde{W}Z_3\tilde{W}(A\tilde{W}B\tilde{W})^{\tilde{k}+1}(C\tilde{W})^{\tilde{k}+1}\\
        &=&U_1\tilde{W}(A\tilde{W}B\tilde{W})^{\tilde{k}}(C\tilde{W})^{\tilde{k}+1}\\
        &=&U_1\tilde{W}(C\tilde{W})^{\tilde{k}+1}(A\tilde{W}B\tilde{W})^{\tilde{k}}\\
        &=&(C\tilde{W})^{\tilde{k}}(A\tilde{W}B\tilde{W})^{\tilde{k}}\\
        &=&(A\tilde{W}B\tilde{W}C\tilde{W})^{\tilde{k}}.
    \end{eqnarray*}
    Hence \[
        \mathcal{R}((A\tilde{W}B\tilde{W}C\tilde{W})^{\tilde{k}}) \subseteq \mathcal{R}(Q_1)~\mbox{and}~\mathcal{R}(Q_1) \subseteq \mathcal{R}((A\tilde{W}B\tilde{W}C\tilde{W})^{\tilde{k}}),
        \]
    which implies \[ rank((A\tilde{W}B\tilde{W}C\tilde{W})^{\tilde{k}})=rank(Q_1). \]
 Therefore $Q_1$ and $U_1\tilde{W}Y_4\tilde{W}Z_3$ satisfies the conditions of the minimal rank W-weighted weak Drazin inverse of $A\tilde{W}B\tilde{W}C$. It follows that \[ X_4=U_1\tilde{W}Y_4\tilde{W}Z_3.\]
\end{proof}
We investigate the triple forward order law for minimal rank W-weighted weak Drazin inverse.
\begin{theorem}\label{Inverse 3.32} \normalfont
    Suppose $A,B\in\mathbb{C}^{m\times{n}}$, $\tilde{W}\in\mathbb{C}^{n\times{m}}$ with $A\tilde{W}$, $B\tilde{W}$, $C\tilde{W}$ are commuting with each other and $\tilde{k}=\max\{ind(A\tilde{W}),ind(B\tilde{W}),ind(C\tilde{W}),ind(A\tilde{W}B\tilde{W}C\tilde{W})\}$. Let $X_5$, $Y_4$, $Z_3$ and $U_1$ are minimal rank W-weighted weak Drazin inverses of $A\tilde{W}B\tilde{W}C$, $B$, $A$ and $C$, respectively, with $(Z_3\tilde{W}Y_4\tilde{W})(C\tilde{W})=(C\tilde{W})(Z_3\tilde{W}Y_4\tilde{W})$. Then 
     \[
     X_5=Z_3\tilde{W}Y_4\tilde{W}U_1.
     \]
\end{theorem}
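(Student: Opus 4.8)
The plan is to follow the template of the triple reverse order law in Theorem~\ref{Inverse 3.31}: reduce the triple product to a double product using the two-factor forward order law of Theorem~\ref{Inverse 3.28}, and then verify that the candidate $Q=Z_3\tilde{W}Y_4\tilde{W}U_1$ fulfils the two defining conditions of the minimal rank W-weighted weak Drazin inverse of $A\tilde{W}B\tilde{W}C$, namely $Q\tilde{W}(A\tilde{W}B\tilde{W}C\tilde{W})^{\tilde{k}+1}=(A\tilde{W}B\tilde{W}C\tilde{W})^{\tilde{k}}$ and $rank(Q)=rank((A\tilde{W}B\tilde{W}C\tilde{W})^{\tilde{k}})$. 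Uniqueness of this inverse then forces $X_5=Q$. First I would record the defining relations $Z_3\tilde{W}(A\tilde{W})^{\tilde{k}+1}=(A\tilde{W})^{\tilde{k}}$, $Y_4\tilde{W}(B\tilde{W})^{\tilde{k}+1}=(B\tilde{W})^{\tilde{k}}$, $U_1\tilde{W}(C\tilde{W})^{\tilde{k}+1}=(C\tilde{W})^{\tilde{k}}$ with the corresponding rank equalities, together with $\mathcal{R}(U_1)=\mathcal{R}((C\tilde{W})^{\tilde{k}})$ from Theorem~\ref{main1.1}. Applying Theorem~\ref{Inverse 3.28} to $A$ and $B$, the matrix $V:=Z_3\tilde{W}Y_4$ is the minimal rank W-weighted weak Drazin inverse of $A\tilde{W}B$, so $V\tilde{W}(A\tilde{W}B\tilde{W})^{\tilde{k}+1}=(A\tilde{W}B\tilde{W})^{\tilde{k}}$ and $\mathcal{R}(V)=\mathcal{R}((A\tilde{W}B\tilde{W})^{\tilde{k}})$.

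For the weak Drazin equation I would use the pairwise commutativity of $A\tilde{W},B\tilde{W},C\tilde{W}$ to write $(A\tilde{W}B\tilde{W}C\tilde{W})^{\tilde{k}+1}=(C\tilde{W})^{\tilde{k}+1}(A\tilde{W}B\tilde{W})^{\tilde{k}+1}$ and run
\begin{align*}
Q\tilde{W}(A\tilde{W}B\tilde{W}C\tilde{W})^{\tilde{k}+1}
&=Z_3\tilde{W}Y_4\tilde{W}U_1\tilde{W}(C\tilde{W})^{\tilde{k}+1}(A\tilde{W}B\tilde{W})^{\tilde{k}+1}\\
&=Z_3\tilde{W}Y_4\tilde{W}(C\tilde{W})^{\tilde{k}}(A\tilde{W}B\tilde{W})^{\tilde{k}+1}\\
&=(C\tilde{W})^{\tilde{k}}Z_3\tilde{W}Y_4\tilde{W}(A\tilde{W}B\tilde{W})^{\tilde{k}+1}\\
&=(C\tilde{W})^{\tilde{k}}(A\tilde{W}B\tilde{W})^{\tilde{k}}\\
&=(A\tilde{W}B\tilde{W}C\tilde{W})^{\tilde{k}}.
\end{align*}
The second equality applies $U_1\tilde{W}(C\tilde{W})^{\tilde{k}+1}=(C\tilde{W})^{\tilde{k}}$, the third is the single place where the hypothesis $(Z_3\tilde{W}Y_4\tilde{W})(C\tilde{W})=(C\tilde{W})(Z_3\tilde{W}Y_4\tilde{W})$ is consumed, and the fourth uses the two-factor reduction $V\tilde{W}(A\tilde{W}B\tilde{W})^{\tilde{k}+1}=(A\tilde{W}B\tilde{W})^{\tilde{k}}$.

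For the rank condition, the equation just established gives $(A\tilde{W}B\tilde{W}C\tilde{W})^{\tilde{k}}=Q\tilde{W}(A\tilde{W}B\tilde{W}C\tilde{W})^{\tilde{k}+1}$, hence $\mathcal{R}((A\tilde{W}B\tilde{W}C\tilde{W})^{\tilde{k}})\subseteq\mathcal{R}(Q)$. For the reverse inclusion I would write $U_1=(C\tilde{W})^{\tilde{k}}S$ (legitimate since $\mathcal{R}(U_1)=\mathcal{R}((C\tilde{W})^{\tilde{k}})$), so that $Q=V\tilde{W}(C\tilde{W})^{\tilde{k}}S=(C\tilde{W})^{\tilde{k}}V\tilde{W}S$ by the commutativity hypothesis; since $\mathcal{R}(V\tilde{W})\subseteq\mathcal{R}(V)=\mathcal{R}((A\tilde{W}B\tilde{W})^{\tilde{k}})$, this yields $\mathcal{R}(Q)\subseteq\mathcal{R}((C\tilde{W})^{\tilde{k}}(A\tilde{W}B\tilde{W})^{\tilde{k}})=\mathcal{R}((A\tilde{W}B\tilde{W}C\tilde{W})^{\tilde{k}})$. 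The two inclusions give $rank(Q)=rank((A\tilde{W}B\tilde{W}C\tilde{W})^{\tilde{k}})=rank(X_5)$, so $Q$ and $X_5$ are both minimal rank W-weighted weak Drazin inverses of $A\tilde{W}B\tilde{W}C$ and therefore equal, that is $X_5=Z_3\tilde{W}Y_4\tilde{W}U_1$.

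I expect the main obstacle to be precisely the reverse range inclusion $\mathcal{R}(Q)\subseteq\mathcal{R}((A\tilde{W}B\tilde{W}C\tilde{W})^{\tilde{k}})$, which in the analogous Theorem~\ref{Inverse 3.31} is merely asserted; making it rigorous is what genuinely requires both the commutativity hypothesis and the sharp range identity $\mathcal{R}(V)=\mathcal{R}((A\tilde{W}B\tilde{W})^{\tilde{k}})$ coming from Theorem~\ref{Inverse 3.28}, whereas the weak Drazin equation itself is a direct substitution chain.
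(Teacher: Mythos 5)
Your argument has a genuine gap at the single point you yourself flag as the crux. You invoke Theorem \ref{Inverse 3.28} to conclude that $V=Z_3\tilde{W}Y_4$ is a minimal rank W-weighted weak Drazin inverse of $A\tilde{W}B$, and in particular that $\mathcal{R}(V)=\mathcal{R}((A\tilde{W}B\tilde{W})^{\tilde{k}})$. But Theorem \ref{Inverse 3.28} carries the hypothesis $(Z_3\tilde{W})(B\tilde{W})=(B\tilde{W})(Z_3\tilde{W})$, and Theorem \ref{Inverse 3.32} does not assume it: the only commutation involving the inverses is $(Z_3\tilde{W}Y_4\tilde{W})(C\tilde{W})=(C\tilde{W})(Z_3\tilde{W}Y_4\tilde{W})$. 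The weak-Drazin half of your argument survives without the citation, since $Z_3\tilde{W}Y_4\tilde{W}(A\tilde{W}B\tilde{W})^{\tilde{k}+1}=Z_3\tilde{W}Y_4\tilde{W}(B\tilde{W})^{\tilde{k}+1}(A\tilde{W})^{\tilde{k}+1}=Z_3\tilde{W}(A\tilde{W})^{\tilde{k}+1}(B\tilde{W})^{\tilde{k}}=(A\tilde{W}B\tilde{W})^{\tilde{k}}$ uses only the commutativity of $A\tilde{W}$ and $B\tilde{W}$. The range inclusion $\mathcal{R}(V)\subseteq\mathcal{R}((A\tilde{W}B\tilde{W})^{\tilde{k}})$ does not: from $\mathcal{R}(Z_3)=\mathcal{R}((A\tilde{W})^{\tilde{k}})$ and $\mathcal{R}(Y_4)=\mathcal{R}((B\tilde{W})^{\tilde{k}})$ alone you only get $\mathcal{R}(V)\subseteq\mathcal{R}((A\tilde{W})^{\tilde{k}})$, and there is no way to insert the factor $(B\tilde{W})^{\tilde{k}}$ without commuting $Z_3\tilde{W}$ past $B\tilde{W}$ — exactly the missing hypothesis.

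Moreover the gap is not repairable by a cleverer argument, because the needed inclusion is false under the stated hypotheses. Take $\tilde{W}=I$, $A=\begin{pmatrix}1&0\\0&0\end{pmatrix}$, $B=\begin{pmatrix}0&0\\0&1\end{pmatrix}$, $C=I$; then $A,B,C$ commute pairwise, $\tilde{k}=1$ and $A\tilde{W}B\tilde{W}C=AB=\mathbf{O}$. The matrices $Z_3=\begin{pmatrix}1&1\\0&0\end{pmatrix}$, $Y_4=B$, $U_1=I$ are minimal rank weak Drazin inverses of $A$, $B$, $C$, and the hypothesis $(Z_3\tilde{W}Y_4\tilde{W})(C\tilde{W})=(C\tilde{W})(Z_3\tilde{W}Y_4\tilde{W})$ holds trivially, yet $Z_3\tilde{W}Y_4\tilde{W}U_1=\begin{pmatrix}0&1\\0&0\end{pmatrix}$ has rank $1$, while the only minimal rank weak Drazin inverse of $AB=\mathbf{O}$ is $X_5=\mathbf{O}$ (note $(Z_3\tilde{W})(B\tilde{W})\neq(B\tilde{W})(Z_3\tilde{W})$ here). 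So the statement itself needs the extra hypothesis $(Z_3\tilde{W})(B\tilde{W})=(B\tilde{W})(Z_3\tilde{W})$, under which Theorem \ref{Inverse 3.28} applies and your proof goes through verbatim. In fairness, this defect is inherited from the paper: Theorem \ref{Inverse 3.32} is stated there without any proof, and the paper's proof of the companion Theorem \ref{Inverse 3.31} commits the same oversight — it invokes Theorem \ref{Inverse 3.27} without that theorem's commutation hypothesis and merely asserts the reverse range inclusion that you, correctly, tried to make rigorous.
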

We present an example illustrating both reverse and forward order laws applicable to the minimal rank W-weighted weak Drazin inverse, as well as the corresponding triple reverse and forward order laws.
\begin{example}
    Consider the following matrices $A=\begin{bmatrix}
        &1&1&1&1&\\&0&0&1&0&\\&0&1&0&1&\\&0&0&0&0&\\&0&0&0&0&\\
    \end{bmatrix}$, $B=\begin{bmatrix}
        &1&0&1&0&\\&0&1&0&0&\\&0&0&0&0&\\&0&0&0&0&\\&0&0&0&0&\\
    \end{bmatrix}$, $C=\begin{bmatrix}
        &1&1&0&1&\\&0&1&1&0&\\&0&0&0&0&\\&0&0&0&0&\\&0&0&0&0&\\
    \end{bmatrix}$ and $\tilde{W}=\begin{bmatrix}
        &1&1&0&1&1&\\&0&0&0&0&0&\\&0&0&0&0&0&\\&0&0&0&0&0&\\
    \end{bmatrix}$. These matrices satisfy the commutative properties: $(A\tilde{W})(B\tilde{W})=(B\tilde{W})(A\tilde{W})$ and $(A\tilde{W})(B\tilde{W})(C\tilde{W})=(C\tilde{W})(B\tilde{W})(A\tilde{W})$. Now  
    $Z_2=\begin{bmatrix}
        &1&z_1&z_2&z_3&\\&0&0&0&0&\\&0&0&0&0&\\&0&0&0&0&\\&0&0&0&0&\\
    \end{bmatrix},$ $Y_3=\begin{bmatrix}
        &1&y_1&0&y_2&\\&0&0&0&0&\\&0&0&0&0&\\&0&0&0&0&\\&0&0&0&0&\\
    \end{bmatrix}$ and $U_1=\begin{bmatrix}
        &1&u_1&0&0&\\&0&0&0&0&\\&0&0&0&0&\\&0&0&0&0&\\&0&0&0&0&\\
    \end{bmatrix}$ are minimal rank W-weighted weak Drazin inverses of $A$, $B$, and $C$, respectively. Further, $X_2=\begin{bmatrix}
        &1&z_1&z_2&z_3&\\&0&0&0&0&\\&0&0&0&0&\\&0&0&0&0&\\&0&0&0&0&\\ \end{bmatrix}$, $X_3=\begin{bmatrix}
        &1&y_1&0&y_2&\\&0&0&0&0&\\&0&0&0&0&\\&0&0&0&0&\\&0&0&0&0&\\ \end{bmatrix}$ are minimal rank W-weighted weak Drazin inverses of $A\tilde{W}B$ and $X_4=\begin{bmatrix}
        &1&z_1&z_2&z_3&\\&0&0&0&0&\\&0&0&0&0&\\&0&0&0&0&\\&0&0&0&0&\\ \end{bmatrix}$, $X_5=\begin{bmatrix}
        &1&u_1&0&0&\\&0&0&0&0&\\&0&0&0&0&\\&0&0&0&0&\\&0&0&0&0&\\ \end{bmatrix}$ are the minimal rank W-weighted weak Drazin inverses of $A\tilde{W}B\tilde{W}C$. By Theorem \ref{Inverse 3.27}, $Y_3$ satisfies the condition $(Y_3\tilde{W})(A\tilde{W})=(A\tilde{W})(Y_3\tilde{W})$, therefore $Y_3\tilde{W}Z_2=\begin{bmatrix}
        &1&z_1&z_2&z_3&\\&0&0&0&0&\\&0&0&0&0&\\&0&0&0&0&\\&0&0&0&0&\\
    \end{bmatrix} = X_2$. Similarly by Theorem \ref{Inverse 3.28}, $Z_2$ satisfies the condition $(Z_2\tilde{W})(B\tilde{W})=(B\tilde{W})(Z_2\tilde{W})$ and we derive $Z_2\tilde{W}Y_3=\begin{bmatrix}
        &1&y_1&0&y_2&\\&0&0&0&0&\\&0&0&0&0&\\&0&0&0&0&\\&0&0&0&0&\\
    \end{bmatrix} = X_3$. Now by Theorem \ref{Inverse 3.31} and Theorem \ref{Inverse 3.32}, we obtain the triple reverse and forward order laws for minimal rank W-weighted weak Drazin inverse as $X_4=U_1\tilde{W}Y_4\tilde{W}Z_3=\begin{bmatrix} &1&z_1&z_2&z_3&\\&0&0&0&0&\\&0&0&0&0&\\&0&0&0&0&\\&0&0&0&0&\\ 
    \end{bmatrix}$ and $X_5=Z_3\tilde{W}Y_4\tilde{W}U_1=\begin{bmatrix}
        &1&u_1&0&0&\\&0&0&0&0&\\&0&0&0&0&\\&0&0&0&0&\\&0&0&0&0&
        \end{bmatrix},$ respectively. 
\end{example}
The triple reverse and forward order laws are reduced to the following results for the W-weighted Drazin inverse.
\begin{corollary}\normalfont
    Suppose $A,B,C\in\mathbb{C}^{m\times{n}}$, $\tilde{W}\in\mathbb{C}^{n\times{m}}$ with $A\tilde{W}$, $B\tilde{W}$, $C\tilde{W}$ are commuting with each other and $\tilde{k}=\max\{ind(A\tilde{W}),ind(B\tilde{W}),ind(C\tilde{W}),ind(A\tilde{W}B\tilde{W}C\tilde{W})\}$. Assume that $X_4=(A\tilde{W}B\tilde{W}C)^{D,W}$, $Y_4=B^{D,W}$, $Z_3=C^{D,W}$ and $U_1=C^{D,W}$ are W-weighted Drazin inverses of $A\tilde{W}B\tilde{W}C$, $B$, $A$ and $C$, respectively, with $(C^{D,W}\tilde{W})(A\tilde{W}B\tilde{W})=(A\tilde{W}B\tilde{W})(C^{D,W}\tilde{W})$. Then
     \[
     (A\tilde{W}B\tilde{W}C)^{D,W}=C^{D,W}\tilde{W}B^{D,W}\tilde{W}A^{D,W}.
     \]
\end{corollary}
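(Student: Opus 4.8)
The plan is to derive this identity as a direct specialization of Theorem \ref{Inverse 3.31}, using the fact recalled in the introduction that the W-weighted Drazin inverse is a particular instance of the minimal rank W-weighted weak Drazin inverse. First I would record the key identity $M^{D,W}\tilde{W}=(M\tilde{W})^{D}$, valid for any $M$, which follows from $M^{D,W}=[(M\tilde{W})^{D}]^{2}M$ together with the Drazin identity $[(M\tilde{W})^{D}]^{2}(M\tilde{W})=(M\tilde{W})^{D}$. From this I would check that each of $(A\tilde{W}B\tilde{W}C)^{D,W}$, $B^{D,W}$, $A^{D,W}$ and $C^{D,W}$ is indeed a minimal rank W-weighted weak Drazin inverse of $A\tilde{W}B\tilde{W}C$, $B$, $A$ and $C$ respectively: the defining equation $M^{D,W}\tilde{W}(M\tilde{W})^{\tilde{k}+1}=(M\tilde{W})^{\tilde{k}}$ is immediate from $M^{D,W}\tilde{W}=(M\tilde{W})^{D}$ and $(M\tilde{W})^{D}(M\tilde{W})^{\tilde{k}+1}=(M\tilde{W})^{\tilde{k}}$, and the rank equality $rank(M^{D,W})=rank((M\tilde{W})^{\tilde{k}})$ holds as well, so each W-weighted Drazin inverse satisfies the hypotheses of Theorem \ref{Inverse 3.31}.

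Next I would verify the commutativity hypothesis required by Theorem \ref{Inverse 3.31}, namely $(C^{D,W}\tilde{W})(A\tilde{W}B\tilde{W})=(A\tilde{W}B\tilde{W})(C^{D,W}\tilde{W})$. Using $C^{D,W}\tilde{W}=(C\tilde{W})^{D}$, this is equivalent to $(C\tilde{W})^{D}(A\tilde{W}B\tilde{W})=(A\tilde{W}B\tilde{W})(C\tilde{W})^{D}$. Since $A\tilde{W}$, $B\tilde{W}$ and $C\tilde{W}$ commute pairwise, $C\tilde{W}$ commutes with $A\tilde{W}B\tilde{W}$, and the commuting-matrix result stated at the opening of this subsection (see \cite{Ben2}) then shows that $(C\tilde{W})^{D}$ also commutes with $A\tilde{W}B\tilde{W}$, so the hypothesis is met.

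With these verifications in hand, Theorem \ref{Inverse 3.31} applies with $X_4=(A\tilde{W}B\tilde{W}C)^{D,W}$, $Y_4=B^{D,W}$, $Z_3=A^{D,W}$ and $U_1=C^{D,W}$, and its conclusion $X_4=U_1\tilde{W}Y_4\tilde{W}Z_3$ reads exactly $(A\tilde{W}B\tilde{W}C)^{D,W}=C^{D,W}\tilde{W}B^{D,W}\tilde{W}A^{D,W}$, which is the asserted identity. I expect the only point genuinely needing care to be the justification that the W-weighted Drazin inverse meets the minimal rank hypothesis---the rank equality in particular---so that the substitution into Theorem \ref{Inverse 3.31} is legitimate; once $M^{D,W}\tilde{W}=(M\tilde{W})^{D}$ is in place, the remaining verifications are bookkeeping. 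As a self-contained alternative, one may instead compute directly that $C^{D,W}\tilde{W}B^{D,W}\tilde{W}A^{D,W}=(C\tilde{W})^{D}(B\tilde{W})^{D}[(A\tilde{W})^{D}]^{2}A$ and invoke the reverse order law $[(A\tilde{W})(B\tilde{W})(C\tilde{W})]^{D}=(C\tilde{W})^{D}(B\tilde{W})^{D}(A\tilde{W})^{D}$ for commuting Drazin inverses (the three-factor extension of part (b) of the cited result \cite{Ben2}) to identify this product with $[(A\tilde{W}B\tilde{W}C\tilde{W})^{D}]^{2}(A\tilde{W}B\tilde{W}C)=(A\tilde{W}B\tilde{W}C)^{D,W}$.
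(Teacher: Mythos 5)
Your main route---specializing Theorem \ref{Inverse 3.31} after checking that $M^{D,W}$ is a minimal rank W-weighted weak Drazin inverse of $M$---is exactly what the paper does (it offers no separate argument for this corollary), and those preliminary verifications are correct: $M^{D,W}\tilde{W}=(M\tilde{W})^{D}$, the defining equation and the rank equality follow, and the commutation hypothesis follows from the cited result of \cite{Ben2}. The genuine gap is that Theorem \ref{Inverse 3.31} cannot deliver the asserted \emph{equality}, and your proof inherits this. The proof of that theorem only establishes that $U_1\tilde{W}Y_4\tilde{W}Z_3$ is \emph{a} minimal rank W-weighted weak Drazin inverse of $A\tilde{W}B\tilde{W}C$; minimal rank weak Drazin inverses are not unique (non-uniqueness is precisely what distinguishes them from the Drazin inverse), so membership in the same class does not force $U_1\tilde{W}Y_4\tilde{W}Z_3$ to coincide with the particular element $X_4=(A\tilde{W}B\tilde{W}C)^{D,W}$. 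Your ``self-contained alternative'' stumbles at exactly the same spot: after the reverse order law, the left-hand side of the corollary works out to $[(C\tilde{W})^{D}]^{2}(B\tilde{W})^{D}(A\tilde{W})^{D}C$, whereas your product is $(C\tilde{W})^{D}(B\tilde{W})^{D}[(A\tilde{W})^{D}]^{2}A$; the trailing factors $C$ and $A$ are rectangular and cannot be commuted past the square factors, so the final ``identification'' step is unjustified.

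In fact no argument can close this gap, because the corollary as stated is false. Take $m=1$, $n=2$,
\[
A=B=\begin{pmatrix}1&0\end{pmatrix},\qquad C=\begin{pmatrix}0&1\end{pmatrix},\qquad \tilde{W}=\begin{pmatrix}1\\1\end{pmatrix}.
\]
Then $A\tilde{W}=B\tilde{W}=C\tilde{W}=1$ are scalars, so every commutativity hypothesis holds, including $(C^{D,W}\tilde{W})(A\tilde{W}B\tilde{W})=(A\tilde{W}B\tilde{W})(C^{D,W}\tilde{W})$, and $\tilde{k}=0$. Since $A\tilde{W}B\tilde{W}C=C$, we get
\[
(A\tilde{W}B\tilde{W}C)^{D,W}=C^{D,W}=[(C\tilde{W})^{D}]^{2}C=\begin{pmatrix}0&1\end{pmatrix},
\qquad
C^{D,W}\tilde{W}B^{D,W}\tilde{W}A^{D,W}=1\cdot 1\cdot A^{D,W}=\begin{pmatrix}1&0\end{pmatrix},
\]
and the two sides differ. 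Note that both matrices satisfy $X\tilde{W}=1$ and have rank $1$, i.e., both are minimal rank W-weighted weak Drazin inverses of $A\tilde{W}B\tilde{W}C$: this is the non-uniqueness in action. The most your argument (or the paper's) actually proves is that $C^{D,W}\tilde{W}B^{D,W}\tilde{W}A^{D,W}$ \emph{belongs to the class} of minimal rank W-weighted weak Drazin inverses of $A\tilde{W}B\tilde{W}C$; the corollary, and Theorem \ref{Inverse 3.31} behind it, should be restated in that weaker form.
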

\begin{corollary}\normalfont
     Let us assume that $A,B,C\in\mathbb{C}^{m\times{n}}$, $\tilde{W}\in\mathbb{C}^{n\times{m}}$ with $A\tilde{W}$, $B\tilde{W}$, $C\tilde{W}$ are commuting with each other and $\tilde{k}=\max\{ind(A\tilde{W}),ind(B\tilde{W}),ind(C\tilde{W}),ind(A\tilde{W}B\tilde{W}C\tilde{W})\}$. Suppose $X_5=(A\tilde{W}B\tilde{W}C)^{D,W}$, $Y_4=B^{D,W}$, $Z_3=C^{D,W}$ and $U_1=C^{D,W}$ are W-weighted Drazin inverses of $A\tilde{W}B\tilde{W}C$, $B$, $A$ and $C$, respectively, with $(A^{D,W}\tilde{W}B^{D,W}\tilde{W})(C\tilde{W})=(C\tilde{W})(A^{D,W}\tilde{W}B^{D,W}\tilde{W})$. Then
     \[
     (A\tilde{W}B\tilde{W}C)^{D,W}=A^{D,W}\tilde{W}B^{D,W}\tilde{W}C^{D,W}.
     \]
\end{corollary}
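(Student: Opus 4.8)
The plan is to obtain this corollary as the specialization of the triple forward order law in Theorem~\ref{Inverse 3.32} to the (unique) W-weighted Drazin inverses. First I would record that each W-weighted Drazin inverse is a legitimate instance of a minimal rank W-weighted weak Drazin inverse. Writing $T=B\tilde{W}$ and using $B^{D,W}=[(B\tilde{W})^{D}]^{2}B$ from \cite{Cline3}, one gets $B^{D,W}\tilde{W}=(T^{D})^{2}T=T^{D}=(B\tilde{W})^{D}$, whence $B^{D,W}\tilde{W}(B\tilde{W})^{\tilde{k}+1}=T^{D}T^{\tilde{k}+1}=T^{\tilde{k}}=(B\tilde{W})^{\tilde{k}}$ and $rank(B^{D,W})=rank(T^{D})=rank((B\tilde{W})^{\tilde{k}})$. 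Thus $Z_{3}=A^{D,W}$, $Y_{4}=B^{D,W}$, $U_{1}=C^{D,W}$ and $X_{5}=(A\tilde{W}B\tilde{W}C)^{D,W}$ all qualify as minimal rank W-weighted weak Drazin inverses of $A$, $B$, $C$ and $A\tilde{W}B\tilde{W}C$, respectively (the statement's $Z_{3}=C^{D,W}$ should read $Z_{3}=A^{D,W}$, matching Theorem~\ref{Inverse 3.32}).

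Next I would check the hypotheses of Theorem~\ref{Inverse 3.32} under this specialization. The pairwise commutativity of $A\tilde{W}$, $B\tilde{W}$, $C\tilde{W}$ is assumed, and the residual requirement $(Z_{3}\tilde{W}Y_{4}\tilde{W})(C\tilde{W})=(C\tilde{W})(Z_{3}\tilde{W}Y_{4}\tilde{W})$ becomes precisely the stated condition $(A^{D,W}\tilde{W}B^{D,W}\tilde{W})(C\tilde{W})=(C\tilde{W})(A^{D,W}\tilde{W}B^{D,W}\tilde{W})$. Theorem~\ref{Inverse 3.32} then gives that $Y:=A^{D,W}\tilde{W}B^{D,W}\tilde{W}C^{D,W}$ is \emph{a} minimal rank W-weighted weak Drazin inverse of $A\tilde{W}B\tilde{W}C$.

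The main obstacle is that this conclusion alone is not the claimed identity: by Theorem~\ref{main1.1}(iii) a minimal rank W-weighted weak Drazin inverse is pinned down only up to its null space, so I must still identify $Y$ with the \emph{unique} inverse $(A\tilde{W}B\tilde{W}C)^{D,W}$. I would close this gap by a direct computation. Put $T_{A}=A\tilde{W}$, $T_{B}=B\tilde{W}$, $T_{C}=C\tilde{W}$ and $M=A\tilde{W}B\tilde{W}C$, so $M\tilde{W}=T_{A}T_{B}T_{C}$. Using $A^{D,W}\tilde{W}=T_{A}^{D}$, $B^{D,W}\tilde{W}=T_{B}^{D}$ and $C^{D,W}=(T_{C}^{D})^{2}C$ gives $Y=T_{A}^{D}T_{B}^{D}(T_{C}^{D})^{2}C$. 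Since $T_{A},T_{B},T_{C}$ commute, the commuting–Drazin result of \cite{Ben2} quoted above, applied iteratively, shows that $T_{A}^{D},T_{B}^{D},T_{C}^{D}$ commute with one another and with $T_{A},T_{B},T_{C}$, and that $(M\tilde{W})^{D}=T_{A}^{D}T_{B}^{D}T_{C}^{D}$. Hence
\[
[(M\tilde{W})^{D}]^{2}M=(T_{A}^{D})^{2}(T_{B}^{D})^{2}(T_{C}^{D})^{2}T_{A}T_{B}C=(T_{A}^{D})^{2}T_{A}\,(T_{B}^{D})^{2}T_{B}\,(T_{C}^{D})^{2}C=T_{A}^{D}T_{B}^{D}(T_{C}^{D})^{2}C=Y,
\]
where $T_{A}$ and $T_{B}$ are commuted leftward and the identity $(T^{D})^{2}T=T^{D}$ is used twice. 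Since $(A\tilde{W}B\tilde{W}C)^{D,W}=[(M\tilde{W})^{D}]^{2}M$ by \cite{Cline3}, this yields $(A\tilde{W}B\tilde{W}C)^{D,W}=A^{D,W}\tilde{W}B^{D,W}\tilde{W}C^{D,W}$, completing the proof.
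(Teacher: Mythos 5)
Your proposal is correct, and it is in fact more complete than what the paper offers: the paper states this corollary with no proof at all, presenting it as an immediate specialization of Theorem~\ref{Inverse 3.32} (take $X_5=(A\tilde{W}B\tilde{W}C)^{D,W}$, $Y_4=B^{D,W}$, $Z_3=A^{D,W}$, $U_1=C^{D,W}$ and read off the conclusion). You carry out that same specialization, including the two necessary bookkeeping facts the paper leaves tacit --- that $B^{D,W}$ is a legitimate minimal rank W-weighted weak Drazin inverse (via $B^{D,W}\tilde{W}=(B\tilde{W})^{D}$ and the rank count), and that the residual hypothesis of Theorem~\ref{Inverse 3.32} becomes exactly the stated commutativity condition --- and you also correctly flag the paper's typo $Z_3=C^{D,W}$ (it should be $A^{D,W}$). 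Where you genuinely depart from the paper is in refusing to stop there: as you observe, the proofs of Theorems~\ref{Inverse 3.27} and \ref{Inverse 3.31} (and hence \ref{Inverse 3.32}) only show that the product $Z_3\tilde{W}Y_4\tilde{W}U_1$ \emph{satisfies the defining conditions} of a minimal rank W-weighted weak Drazin inverse, and such inverses are not unique, so this alone does not force equality with the particular matrix $(A\tilde{W}B\tilde{W}C)^{D,W}$. Your closing computation --- $(A\tilde{W}B\tilde{W}C)^{D,W}=[(T_AT_BT_C)^{D}]^2T_AT_BC=T_A^{D}T_B^{D}(T_C^{D})^2C=A^{D,W}\tilde{W}B^{D,W}\tilde{W}C^{D,W}$, using $B^{D,W}=[(B\tilde{W})^{D}]^2B$ from \cite{Cline3} and the commuting-Drazin product rule from \cite{Ben2} --- repairs this gap and is in fact a self-contained proof of the corollary that does not rely on Theorem~\ref{Inverse 3.32} at all; it exploits the uniqueness of the W-weighted Drazin inverse, which is precisely what the weak-inverse framework lacks. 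The paper's route buys brevity and uniformity with the rest of Section 3.3, but inherits the uniqueness gap; your route buys rigor, and as a byproduct shows the extra hypothesis $(A^{D,W}\tilde{W}B^{D,W}\tilde{W})(C\tilde{W})=(C\tilde{W})(A^{D,W}\tilde{W}B^{D,W}\tilde{W})$ is actually automatic here, since Drazin inverses of commuting matrices commute with all the factors involved.
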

We establish the reverse order law for the W-weighted weak MPD inverse.
\begin{theorem}\label{Inverse 3.30} \normalfont
    Let $A,B \in\mathbb{C}^{m \times{n}}$, $\tilde{W}\in\mathbb{C}^{n \times{m}}$ with $\tilde{k}=\max\{ind(A\tilde{W}),ind(B\tilde{W})\}$ and $(A\tilde{W})(B\tilde{W})=(B\tilde{W})(A\tilde{W})$. Suppose $X_2$, $Y_3$ and $Z_2$ are minimal rank W-weighted weak Drazin inverses of $A\tilde{W}B$, $B$ and $A$, respectively and that $\tilde{X}$, $\tilde{Y}_1$ and $\tilde{Z}$ are W-weighted weak MPD inverses of $A\tilde{W}B$, $B$ and $A$. If $\mathcal{R}(\tilde{W}BB^*\tilde{W}^*A^*) \subseteq \mathcal{R}(A^{\dag}A)$, $\tilde{W}^{\dag}\tilde{W}BB^*=BB^*\tilde{W}^{\dag}\tilde{W}$, ${\tilde{W}}^{\dag}A^{\dag}(B\tilde{W}Y_3\tilde{W})=(B\tilde{W}Y_3\tilde{W}){\tilde{W}}^{\dag}A^{\dag}$  and $(A\tilde{W})(Y_3\tilde{W})=(Y_3\tilde{W})(A\tilde{W})$, then
    \[
    \tilde{X}=\tilde{Y}_1A\tilde{Z}.
    \]
\end{theorem}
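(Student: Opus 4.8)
The plan is to show that the matrix $U:=\tilde{Y}_1A\tilde{Z}$ satisfies the same defining data as $\tilde{X}$ by first reducing both to a single product and then invoking a Moore--Penrose reverse order law. First I would unfold the definitions $\tilde{Y}_1=B^{\dag}B\tilde{W}Y_3\tilde{W}$ and $\tilde{Z}=A^{\dag}A\tilde{W}Z_2\tilde{W}$, so that, using $AA^{\dag}A=A$,
\[
U=B^{\dag}B\tilde{W}Y_3\tilde{W}\,(AA^{\dag}A)\,\tilde{W}Z_2\tilde{W}=B^{\dag}B\tilde{W}Y_3\tilde{W}A\tilde{W}Z_2\tilde{W}.
\]
Since $(A\tilde{W})(B\tilde{W})=(B\tilde{W})(A\tilde{W})$ and $(Y_3\tilde{W})(A\tilde{W})=(A\tilde{W})(Y_3\tilde{W})$, Theorem \ref{Inverse 3.27} applies and yields $X_2=Y_3\tilde{W}Z_2$; consequently $\tilde{X}=(A\tilde{W}B)^{\dag}(A\tilde{W}B)\tilde{W}Y_3\tilde{W}Z_2\tilde{W}$.

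Next I would bring the two sides into a common shape. As $B\tilde{W}Y_3\tilde{W}$ is a product of two factors each commuting with $A\tilde{W}$, it commutes with $A\tilde{W}$, and hence $(A\tilde{W}B)\tilde{W}Y_3\tilde{W}Z_2\tilde{W}=A\tilde{W}B\tilde{W}Y_3\tilde{W}Z_2\tilde{W}=B\tilde{W}Y_3\tilde{W}A\tilde{W}Z_2\tilde{W}$. Writing $w:=B\tilde{W}Y_3\tilde{W}A\tilde{W}Z_2\tilde{W}$, this gives $\tilde{X}=(A\tilde{W}B)^{\dag}w$ and $U=B^{\dag}w$, so the entire statement collapses to the single identity $(A\tilde{W}B)^{\dag}w=B^{\dag}w$, i.e. to the assertion that $(A\tilde{W}B)^{\dag}$ and $B^{\dag}$ agree on $\mathcal{R}(w)$. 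Using the defining identities of $Y_3$ and $Z_2$ (Theorem \ref{main1.1}) together with the commutativity relations, I would first record $\mathcal{R}(w)\subseteq\mathcal{R}((B\tilde{W})^{\tilde{k}})$, and, since $w=A\tilde{W}B\tilde{W}Y_3\tilde{W}Z_2\tilde{W}$ by commutativity, also $\mathcal{R}(w)\subseteq\mathcal{R}(A\tilde{W}B)$ and $\mathcal{R}(w)\subseteq\mathcal{R}(B)$ (compare the range chains in Theorem \ref{Inverse 3.19}); this is what makes the two Moore--Penrose inverses comparable on $\mathcal{R}(w)$.

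The decisive ingredient is the Moore--Penrose reverse order law. The conditions $\mathcal{R}(\tilde{W}BB^{*}\tilde{W}^{*}A^{*})\subseteq\mathcal{R}(A^{\dag}A)$ and $\tilde{W}^{\dag}\tilde{W}BB^{*}=BB^{*}\tilde{W}^{\dag}\tilde{W}$ are exactly the Greville- and Castro-type hypotheses (see \cite{Greville21,Castro}) that guarantee $(A\tilde{W}B)^{\dag}=(\tilde{W}B)^{\dag}A^{\dag}=B^{\dag}\tilde{W}^{\dag}A^{\dag}$: the first splits off the outer factor $A$ and the second splits $\tilde{W}$ from $B$. Substituting this factorization into $(A\tilde{W}B)^{\dag}w$ and then invoking the hypothesis $\tilde{W}^{\dag}A^{\dag}(B\tilde{W}Y_3\tilde{W})=(B\tilde{W}Y_3\tilde{W})\tilde{W}^{\dag}A^{\dag}$ to transport the factor $\tilde{W}^{\dag}A^{\dag}$ through $B\tilde{W}Y_3\tilde{W}$, one is left with orthogonal projectors that should act as the identity on the ranges identified above, reducing $(A\tilde{W}B)^{\dag}w$ to $B^{\dag}w$.

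I expect the main obstacle to be precisely this last collapse: after applying the reverse order law and the commutation relation, the leftover projectors $\tilde{W}\tilde{W}^{\dag}$ and $A^{\dag}A$ remain sandwiched among the weak-Drazin factors, and proving that they may be deleted requires combining all four hypotheses---the two commutativity relations and the two range chains---rather than any one of them in isolation. I would therefore organize the final computation as a sequence of range inclusions, each removal of a spurious projector being justified by a defining identity from Theorem \ref{main1.1} and by the commutativity of $A\tilde{W}$, $B\tilde{W}$ and $Y_3\tilde{W}$. Once $(A\tilde{W}B)^{\dag}w=B^{\dag}w$ is secured, the equality $\tilde{X}=U=\tilde{Y}_1A\tilde{Z}$ is immediate.
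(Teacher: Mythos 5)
Your reduction is fine as far as it goes, and it mirrors the opening of the paper's proof: unfolding $\tilde{Y}_1=B^{\dag}B\tilde{W}Y_3\tilde{W}$ and $\tilde{Z}=A^{\dag}A\tilde{W}Z_2\tilde{W}$, invoking Theorem \ref{Inverse 3.27} to write $X_2=Y_3\tilde{W}Z_2$, and using the commutation hypotheses to obtain $\tilde{X}=(A\tilde{W}B)^{\dag}w$ and $\tilde{Y}_1A\tilde{Z}=B^{\dag}w$ with $w=B\tilde{W}Y_3\tilde{W}A\tilde{W}Z_2\tilde{W}$. The problems come after that. First, the hypotheses do not give the full reverse order law $(A\tilde{W}B)^{\dag}=(\tilde{W}B)^{\dag}A^{\dag}=B^{\dag}\tilde{W}^{\dag}A^{\dag}$ that you invoke: the inclusion $\mathcal{R}(\tilde{W}BB^*\tilde{W}^*A^*)\subseteq\mathcal{R}(A^{\dag}A)$ is only one of Greville's two conditions for splitting off $A$, and $\tilde{W}^{\dag}\tilde{W}BB^*=BB^*\tilde{W}^{\dag}\tilde{W}$ supplies only one of the two conditions for separating $\tilde{W}$ from $B$. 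What does follow --- and what the paper actually derives, by multiplying $\tilde{W}BB^*\tilde{W}^*A^*=A^{\dag}A\tilde{W}BB^*\tilde{W}^*A^*$ on the left by $B^{\dag}\tilde{W}^{\dag}$ and on the right by $(A\tilde{W}B)^{*\dag}$ --- is only the localized identity $(A\tilde{W}B)^{\dag}A\tilde{W}B=B^{\dag}\tilde{W}^{\dag}A^{\dag}A\tilde{W}B$, i.e.\ agreement of the two candidate inverses on $\mathcal{R}(A\tilde{W}B)$; since $\mathcal{R}(w)\subseteq\mathcal{R}(A\tilde{W}B)$, this weaker fact is all one needs and all one has.

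Second, and decisively, the final ``collapse'' that you yourself flag as the main obstacle cannot be carried out from the stated hypotheses. Applying the localized identity and then the commutation of $\tilde{W}^{\dag}A^{\dag}$ with $B\tilde{W}Y_3\tilde{W}$ gives
\[
(A\tilde{W}B)^{\dag}w
=B^{\dag}\tilde{W}^{\dag}A^{\dag}w
=B^{\dag}\bigl(B\tilde{W}Y_3\tilde{W}\bigr)\tilde{W}^{\dag}A^{\dag}A\tilde{W}Z_2\tilde{W}
=\tilde{Y}_1\tilde{W}^{\dag}\tilde{Z},
\]
with $\tilde{W}^{\dag}$, not $A$, in the middle; none of the four hypotheses allows you to delete the sandwiched projectors $\tilde{W}\tilde{W}^{\dag}$ and $A^{\dag}A$ so as to convert $\tilde{Y}_1\tilde{W}^{\dag}\tilde{Z}$ into $\tilde{Y}_1A\tilde{Z}=B^{\dag}w$. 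In fact the displayed conclusion $\tilde{X}=\tilde{Y}_1A\tilde{Z}$ in the theorem statement is a misprint: the paper's own proof ends with $\tilde{X}=B_W^{\dag,D_w}\tilde{W}^{\dag}A_W^{\dag,D_w}=\tilde{Y}_1\tilde{W}^{\dag}\tilde{Z}$, in agreement with the corollary that follows it, $(A\tilde{W}B)^{\dag,D,W}=B^{\dag,D,W}\tilde{W}^{\dag}A^{\dag,D,W}$. So your plan is aimed at an identity the hypotheses do not deliver; redirected at the target $\tilde{X}=\tilde{Y}_1\tilde{W}^{\dag}\tilde{Z}$, the three ingredients you already have (the localized reverse-order identity, Theorem \ref{Inverse 3.27}, and the two commutation hypotheses) close the argument exactly as in the paper.
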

\begin{proof}
    Since $X_2$, $Y_3$ and $Z_2$ are minimal rank W-weighted weak Drazin inverses of $A\tilde{W}B$, $B$ and $A$, respectively, and $\tilde{X}$, $\tilde{Y}_1$ and $\tilde{Z}$ are the corresponding W-weighted weak MPD inverses of $A\tilde{W}B$, $B$ and $A$.
    The standard structural formula of the W-weighted weak MPD inverse is 
    \begin{equation} \label{3.1}
        \tilde{X}=(A\tilde{W}B)_W^{\dag,D_w}=(A\tilde{W}B)^{\dag}A\tilde{W}B\tilde{W}X_2\tilde{W}. 
    \end{equation} 
    We have $\mathcal{R}(\tilde{W}BB^*\tilde{W}^*A^*) \subseteq \mathcal{R}(A^{\dag}A)$, then \begin{eqnarray} \label{3.2}
        \tilde{W}BB^*\tilde{W}^*A^*&=&A^{\dag}AH,~\text{for some matrix H},\nonumber \\
        &=&A^{\dag}AA^{\dag}AH \nonumber \\
        &=&A^{\dag}A\tilde{W}BB^*\tilde{W}^*A^*. 
    \end{eqnarray}
    Multiplying on both sides of the equation (\ref{3.2}) by $B^{\dag}\tilde{W}^{\dag}$ on the pre-multiply side and by $(A\tilde{W}B)^{*\dag}$ on the post-multiply side, we obtain
    \begin{eqnarray*}
B^{\dag}\tilde{W}^{\dag}A^{\dag}A\tilde{W}BB^*\tilde{W}^*A^*(A\tilde{W}B)^{*\dag}&=&B^{\dag}\tilde{W}^{\dag}\tilde{W}BB^*\tilde{W}^*A^*(A\tilde{W}B)^{*\dag}\\
        &=&B^{\dag}BB^*\tilde{W}^{\dag}\tilde{W}\tilde{W}^*A^*(A\tilde{W}B)^{*\dag}\\
        &=&B^*\tilde{W}^*A^*(A\tilde{W}B)^{*\dag}\\
        &=&(A\tilde{W}B)^*(A\tilde{W}B)^{*\dag}\\
        &=&(A\tilde{W}B)^{\dag}(A\tilde{W}B).
    \end{eqnarray*} Hence
    \[
    B^{\dag}\tilde{W}^{\dag}A^{\dag}A\tilde{W}B(A\tilde{W}B)^*(A\tilde{W}B)^{*\dag}=B^{\dag}\tilde{W}^{\dag}A^{\dag}A\tilde{W}B=(A\tilde{W}B)^{\dag}(A\tilde{W}B).
    \]
    From Theorem \ref{Inverse 3.27}, substitute $X_2=Y_1\tilde{W}Z_2$ in the matrix equation (\ref{3.1}). Then
    \begin{eqnarray*}
        \tilde{X}&=&(A\tilde{W}B)_W^{\dag,D_w}\\&=&(A\tilde{W}B)^{\dag}A\tilde{W}B\tilde{W}X_2\tilde{W}\\&=&B^{\dag}{\tilde{W}}^{\dag}A^{\dag}A\tilde{W}B\tilde{W}Y_3\tilde{W}Z_2\tilde{W}\\&=&B^{\dag}{\tilde{W}}^{\dag}A^{\dag}(B\tilde{W}Y_3\tilde{W})A\tilde{W}Z_2\tilde{W}\\&=&B^{\dag}B\tilde{W}Y_3\tilde{W}{\tilde{W}}^{\dag}A^{\dag}A\tilde{W}Z_2\tilde{W}\\&=&B_W^{\dag,D_w}{\tilde{W}}^{\dag}A_W^{\dag,D_w}\\&=&\tilde{Y}_1\tilde{W}^{\dag}\tilde{Z}.
    \end{eqnarray*}

\end{proof}
Following the above theorem, we discuss the reverse order law for W-weighted MPD inverse.
\begin{corollary}\normalfont
   Assume that $A,B \in\mathbb{C}^{m \times{n}}$, $\tilde{W}\in\mathbb{C}^{n \times{m}}$ such that $\tilde{k}=\max\{ind(A\tilde{W}),ind(B\tilde{W})\}$ and $(A\tilde{W})(B\tilde{W})=(B\tilde{W})(A\tilde{W})$. Suppose $X_2=A^{D,W}$, $Y_3=B^{D,W}$ and $Z_2=A^{D,W}$ are W-weighted Drazin inverses of $A\tilde{W}B$, $B$ and $A$, respectively and that $\tilde{X}=(A\tilde{W}B)^{\dag,D,W}$, $\tilde{Y}_1=B^{\dag,D,W}$ and $\tilde{Z}=A^{\dag,D,W}$ are W-weighted MPD inverses of $A\tilde{W}B$, $B$ and $A$. If $\mathcal{R}(\tilde{W}BB^*\tilde{W}^*A^*) \subseteq \mathcal{R}(A^{\dag}A)$, $\tilde{W}^{\dag}\tilde{W}BB^*=BB^*\tilde{W}^{\dag}\tilde{W}$, $(A\tilde{W})(B\tilde{W})^D=(B\tilde{W})^D(A\tilde{W})$, and ${\tilde{W}}^{\dag}A^{\dag}(B\tilde{W}B^{D,W}\tilde{W})=(B\tilde{W}B^{D,W}\tilde{W}){\tilde{W}}^{\dag}A^{\dag}$, then
    \[
    (A\tilde{W}B)^{\dag,D,W}=B^{\dag,D,W}{\tilde{W}}^{\dag}A^{\dag,D,W}.
    \]
\end{corollary}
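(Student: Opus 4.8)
The plan is to obtain this corollary as the specialization of Theorem~\ref{Inverse 3.30} in which every minimal rank $W$-weighted weak Drazin inverse is taken to be the corresponding genuine $W$-weighted Drazin inverse. Since the $W$-weighted Drazin inverse is a particular instance of the minimal rank $W$-weighted weak Drazin inverse, the choices $X_2=(A\tilde{W}B)^{D,W}$, $Y_3=B^{D,W}$ and $Z_2=A^{D,W}$ are admissible, and by the reduction recorded after Theorem~\ref{Inverse 3.4} the associated $W$-weighted weak MPD inverses collapse to the $W$-weighted MPD inverses $\tilde{X}=(A\tilde{W}B)^{\dag,D,W}$, $\tilde{Y}_1=B^{\dag,D,W}$ and $\tilde{Z}=A^{\dag,D,W}$. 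Thus it suffices to verify that, under these choices, all hypotheses of Theorem~\ref{Inverse 3.30} hold; its conclusion $\tilde{X}=\tilde{Y}_1\tilde{W}^{\dag}\tilde{Z}$ then reads exactly $(A\tilde{W}B)^{\dag,D,W}=B^{\dag,D,W}\tilde{W}^{\dag}A^{\dag,D,W}$.

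The key step is a short identity that converts the two commutativity hypotheses of Theorem~\ref{Inverse 3.30} into the Drazin-type assumptions stated in the corollary. Writing $M=B\tilde{W}$ and using $B^{D,W}=[(B\tilde{W})^D]^2B$ together with the defining properties of the Drazin inverse, I would first establish $B^{D,W}\tilde{W}=[M^D]^2M=M^D(M^DM)=M^D(MM^D)=M^DMM^D=M^D=(B\tilde{W})^D$, where the middle equality uses $M^DM=MM^D$ and the last uses $M^DMM^D=M^D$. With $Y_3=B^{D,W}$ this gives $Y_3\tilde{W}=(B\tilde{W})^D$, so the hypothesis $(A\tilde{W})(Y_3\tilde{W})=(Y_3\tilde{W})(A\tilde{W})$ of Theorem~\ref{Inverse 3.30} becomes precisely $(A\tilde{W})(B\tilde{W})^D=(B\tilde{W})^D(A\tilde{W})$, one of the assumptions here. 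Likewise the condition $\tilde{W}^{\dag}A^{\dag}(B\tilde{W}Y_3\tilde{W})=(B\tilde{W}Y_3\tilde{W})\tilde{W}^{\dag}A^{\dag}$ becomes the stated $\tilde{W}^{\dag}A^{\dag}(B\tilde{W}B^{D,W}\tilde{W})=(B\tilde{W}B^{D,W}\tilde{W})\tilde{W}^{\dag}A^{\dag}$.

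The remaining hypotheses of Theorem~\ref{Inverse 3.30} are range conditions and a weight-commutation relation that do not involve the chosen inverses, and therefore carry over verbatim: $\mathcal{R}(\tilde{W}BB^*\tilde{W}^*A^*)\subseteq\mathcal{R}(A^{\dag}A)$ and $\tilde{W}^{\dag}\tilde{W}BB^*=BB^*\tilde{W}^{\dag}\tilde{W}$ are assumed directly, while the overarching commutativity $(A\tilde{W})(B\tilde{W})=(B\tilde{W})(A\tilde{W})$ is shared. Having matched every hypothesis, a direct application of Theorem~\ref{Inverse 3.30} yields the claim. I expect the only genuinely nontrivial point to be the reduction $B^{D,W}\tilde{W}=(B\tilde{W})^D$ and its role in rewriting the two commutativity assumptions; once that identity is secured the corollary is immediate. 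As a consistency check I would also confirm agreement with the earlier reverse order law $(A\tilde{W}B)^{D,W}=B^{D,W}\tilde{W}A^{D,W}$, which is the Drazin-level counterpart of the MPD-level factorization established here.
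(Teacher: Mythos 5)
Your proposal is correct and takes essentially the same route as the paper: the corollary is presented there as an immediate specialization of Theorem~\ref{Inverse 3.30}, obtained by choosing $X_2=(A\tilde{W}B)^{D,W}$, $Y_3=B^{D,W}$, $Z_2=A^{D,W}$ so that the weak MPD inverses collapse to the MPD inverses $(A\tilde{W}B)^{\dag,D,W}$, $B^{\dag,D,W}$, $A^{\dag,D,W}$. Your explicit verification of the identity $B^{D,W}\tilde{W}=[(B\tilde{W})^D]^2 B\tilde{W}=(B\tilde{W})^D$, which translates the theorem's hypothesis $(A\tilde{W})(Y_3\tilde{W})=(Y_3\tilde{W})(A\tilde{W})$ into the corollary's assumption $(A\tilde{W})(B\tilde{W})^D=(B\tilde{W})^D(A\tilde{W})$, is precisely the bookkeeping the paper leaves implicit.
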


By applying reverse and triple reverse order laws to $A\tilde{W}B$ and $A\tilde{W}B\tilde{W}C$, we derive the solution of a new class of matrix equation.
\begin{theorem} \normalfont
    Let $A,B\in\mathbb{C}^{m \times {n}}$, $B_1,\tilde{W}\in\mathbb{C}^{n \times {m}}$ with $(A\tilde{W})(B\tilde{W})=(B\tilde{W})(A\tilde{W})$ and $\tilde{k}=\max\{ ind(A\tilde{W}),ind(B\tilde{W}),ind(A\tilde{W}B\tilde{W})\}$. Suppose $X_2$, $Y_3$ and $Z_2$ are the minimal rank W-weighted weak Drazin inverse of $A\tilde{W}B$, $B$, and $A$ respectively with $(Y_3\tilde{W})(A\tilde{W})=(A\tilde{W})(Y_3\tilde{W})$. If $\mathcal{Y}$ is a W-weighted weak MPD inverse of $A\tilde{W}B$, then the general solution of the equation $\mathcal{Y}(A\tilde{W}B\tilde{W})^{\tilde{k}+1}=B_1(A\tilde{W}B\tilde{W})^{\tilde{k}}$ is given by $\mathcal{Y}=B_1X_2\tilde{W}+Z(I-A\tilde{W}B\tilde{W}X_2\tilde{W})$, where $Z\in\mathbb{C}^{n \times {m}}$ is any arbitrary matrix.
\end{theorem}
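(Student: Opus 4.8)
The plan is to treat the identity $\mathcal{Y}(A\tilde{W}B\tilde{W})^{\tilde{k}+1}=B_1(A\tilde{W}B\tilde{W})^{\tilde{k}}$ as a single linear matrix equation $\mathcal{Y}M=N$ in the unknown $\mathcal{Y}$, with $M=(A\tilde{W}B\tilde{W})^{\tilde{k}+1}$ and $N=B_1(A\tilde{W}B\tilde{W})^{\tilde{k}}$, and to produce its general solution as a fixed particular solution plus the general solution of the homogeneous equation $\mathcal{Y}M=\mathbf{O}$. For brevity I write $C=A\tilde{W}B$, so that $A\tilde{W}B\tilde{W}=C\tilde{W}$ and $X_2$ is the minimal rank W-weighted weak Drazin inverse of $C$. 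The facts I use repeatedly are the defining relation $X_2\tilde{W}(C\tilde{W})^{\tilde{k}+1}=(C\tilde{W})^{\tilde{k}}$, the range identity $X_2=(C\tilde{W})^{\tilde{k}}[(C\tilde{W})^{\tilde{k}}]^{\dag}X_2$ from Theorem \ref{main1.1}(iv), and the outer-inverse relation $X_2\tilde{W}C\tilde{W}X_2=X_2$ from Theorem \ref{main1.1}(iii). I note in passing that these relations, stated for the index of $C\tilde{W}$, persist for the larger $\tilde{k}$ because ranges and the weak-Drazin relation stabilize beyond the index.

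First I would verify that $\mathcal{Y}_0=B_1X_2\tilde{W}$ is a particular solution: post-multiplying by $M$ and using $X_2\tilde{W}(C\tilde{W})^{\tilde{k}+1}=(C\tilde{W})^{\tilde{k}}$ gives $B_1X_2\tilde{W}(C\tilde{W})^{\tilde{k}+1}=B_1(C\tilde{W})^{\tilde{k}}=N$, so the equation is consistent. Next I record that $P:=A\tilde{W}B\tilde{W}X_2\tilde{W}=C\tilde{W}X_2\tilde{W}$ is idempotent, since $P^2=C\tilde{W}(X_2\tilde{W}C\tilde{W}X_2)\tilde{W}=C\tilde{W}X_2\tilde{W}=P$ by Theorem \ref{main1.1}(iii); this is what makes $I-P$ the natural complementary projector appearing in the claimed formula.

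The core step is to show that the homogeneous solution set $\{\mathcal{Y}:\mathcal{Y}M=\mathbf{O}\}$ coincides with $\{Z(I-P):Z\in\mathbb{C}^{n\times m}\}$. For the inclusion $\supseteq$, I compute $Z(I-P)(C\tilde{W})^{\tilde{k}+1}=Z\bigl[(C\tilde{W})^{\tilde{k}+1}-C\tilde{W}X_2\tilde{W}(C\tilde{W})^{\tilde{k}+1}\bigr]=Z\bigl[(C\tilde{W})^{\tilde{k}+1}-C\tilde{W}(C\tilde{W})^{\tilde{k}}\bigr]=\mathbf{O}$ for every $Z$. The reverse inclusion is the only point requiring care: given $\mathcal{Y}M=\mathbf{O}$, I use Theorem \ref{main1.1}(iv) to write $C\tilde{W}X_2=(C\tilde{W})^{\tilde{k}+1}[(C\tilde{W})^{\tilde{k}}]^{\dag}X_2$, whence $\mathcal{Y}P=\mathcal{Y}C\tilde{W}X_2\tilde{W}=\mathcal{Y}(C\tilde{W})^{\tilde{k}+1}[(C\tilde{W})^{\tilde{k}}]^{\dag}X_2\tilde{W}=\mathbf{O}$; therefore $\mathcal{Y}=\mathcal{Y}(I-P)$, exhibiting $\mathcal{Y}$ in the required form with $Z=\mathcal{Y}$. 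This range/factorization argument, leaning on the minimal-rank hypothesis through Theorem \ref{main1.1}(iv), is the main (and essentially the only) obstacle; everything else is direct substitution.

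Finally I would assemble the pieces: since $\mathcal{Y}_0M=N$ and $Z(I-P)M=\mathbf{O}$, every matrix $\mathcal{Y}=B_1X_2\tilde{W}+Z(I-A\tilde{W}B\tilde{W}X_2\tilde{W})$ satisfies the equation, and conversely any solution $\mathcal{Y}$ gives $(\mathcal{Y}-\mathcal{Y}_0)M=\mathbf{O}$, so $\mathcal{Y}-\mathcal{Y}_0=Z(I-P)$ for some $Z$ by the homogeneous characterization. This yields precisely the stated general solution. I would also remark that the commutativity conditions and the reverse order law $X_2=Y_3\tilde{W}Z_2$ of Theorem \ref{Inverse 3.27} are not needed for the solvability argument itself; substituting that factorization merely re-expresses the particular solution and the projector $P$ through the factor inverses, thereby linking this result to the preceding reverse order laws.
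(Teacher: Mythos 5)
Your proposal is correct, and its core computation is the same one the paper uses: both proofs rest on the defining relation $X_2\tilde{W}(A\tilde{W}B\tilde{W})^{\tilde{k}+1}=(A\tilde{W}B\tilde{W})^{\tilde{k}}$ together with Theorem \ref{main1.1}(iv), $X_2=(A\tilde{W}B\tilde{W})^{\tilde{k}}[(A\tilde{W}B\tilde{W})^{\tilde{k}}]^{\dag}X_2$, to convert $\mathcal{Y}(A\tilde{W}B\tilde{W})^{\tilde{k}+1}$ into information about $\mathcal{Y}A\tilde{W}B\tilde{W}X_2\tilde{W}$. The organization differs in a way that makes your version strictly more complete. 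The paper only proves one inclusion: for a solution $\mathcal{Y}$ it shows $B_1X_2\tilde{W}=\mathcal{Y}A\tilde{W}B\tilde{W}X_2\tilde{W}$ and then rewrites $\mathcal{Y}=B_1X_2\tilde{W}+\mathcal{Y}(I-A\tilde{W}B\tilde{W}X_2\tilde{W})$, so every solution has the stated form; it never verifies the converse, that every matrix of that form actually solves the equation, which is needed before one may call it the \emph{general} solution. Your particular-plus-homogeneous decomposition supplies exactly this missing half (the check $Z(I-P)(A\tilde{W}B\tilde{W})^{\tilde{k}+1}=\mathbf{O}$), and your observation that $P=A\tilde{W}B\tilde{W}X_2\tilde{W}$ is idempotent cleanly explains why $I-P$ is the right projector. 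You are also right that the reverse order law $X_2=Y_3\tilde{W}Z_2$ and the attendant commutativity hypotheses are inessential: the paper routes its computation through $B_1Y_3\tilde{W}Z_2\tilde{W}$ before invoking Theorem \ref{main1.1}(iv), but this substitution is cosmetic, and the solvability argument needs only that $X_2$ is a minimal rank W-weighted weak Drazin inverse of $A\tilde{W}B$. Finally, your remark that the defining relations persist when $\tilde{k}$ exceeds $ind(A\tilde{W}B\tilde{W})$ addresses an index subtlety the paper passes over in silence; it is easily justified since ranks and ranges of powers stabilize beyond the index.
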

\begin{proof}
    Since $X_2$ is the minimal rank W-weighted weak Drazin inverse of $A\tilde{W}B$ i.e., it satisfies the conditions $X_2\tilde{W}(A\tilde{W}B\tilde{W})^{\tilde{k}+1}=(A\tilde{W}B\tilde{W})^{\tilde{k}}$ and $rank(X_2)=rank((A\tilde{W}B\tilde{W})^{\tilde{k}})$. Also by Theorem \ref{Inverse 3.27}, $X_2$ satisfies the reverse order law i.e., $X_3=Y_3\tilde{W}Z_2$.
    Now \begin{eqnarray*}
    B_1X_2\tilde{W}&=&B_1Y_3\tilde{W}Z_2\tilde{W},~~\mbox{by~Theorem \ref{Inverse 3.27}}\ \\
    &=&B_1(A\tilde{W}B\tilde{W})^{\tilde{k}}[(A\tilde{W}B\tilde{W})^{\tilde{k}}]^{\dag}Y_3\tilde{W}Z_2\tilde{W},~~\mbox{by~Theorem \ref{main1.1}}\\
    &=&\mathcal{Y}(A\tilde{W}B\tilde{W})^{\tilde{k}+1}[(A\tilde{W}B\tilde{W})^{\tilde{k}}]^{\dag}Y_3\tilde{W}Z_2\tilde{W}\\
    &=&\mathcal{Y}(A\tilde{W}B\tilde{W})(A\tilde{W}B\tilde{W})^{\tilde{k}}[(A\tilde{W}B\tilde{W})^{\tilde{k}}]^{\dag}Y_3\tilde{W}Z_2\tilde{W}\\
    &=&\mathcal{Y}A\tilde{W}B\tilde{W}Y_3\tilde{W}Z_2\tilde{W}\\
    &=&\mathcal{Y}A\tilde{W}B\tilde{W}X_2\tilde{W}.   
    \end{eqnarray*}
    Consequently, \begin{eqnarray*}
        \mathcal{Y}&=&\mathcal{Y}+B_1X_2\tilde{W}-\mathcal{Y}A\tilde{W}B\tilde{W}X_2\tilde{W}\\
        &=&B_1X_2\tilde{W}+\mathcal{Y}-\mathcal{Y}A\tilde{W}B\tilde{W}X_2\tilde{W}\\
        &=&B_1X_2\tilde{W}+\mathcal{Y}(I-A\tilde{W}B\tilde{W}X_2\tilde{W}),
        \end{eqnarray*} which implies that $\mathcal{Y}$ has the form \[ \mathcal{Y}=B_1X_2\tilde{W}+Z(I-A\tilde{W}B\tilde{W}X_2\tilde{W}). \]
\end{proof}
\begin{theorem} \normalfont
    Suppose $A,B,C\in\mathbb{C}^{m \times {n}}$, $B_1,\tilde{W}\in\mathbb{C}^{n \times {m}}$ with $A\tilde{W}$, $B\tilde{W}$ and $C\tilde{W}$ are commuting with each other and $\tilde{k}=\max\{ ind(A\tilde{W}),ind(B\tilde{W}), ind(C\tilde{W}),ind(A\tilde{W}B\tilde{W}C\tilde{W})\}$. Let $X_4$, $Y_4$, $Z_3$ and $U_1$ are the minimal rank W-weighted weak Drazin inverses of $A\tilde{W}B\tilde{W}C$, $B$, $A$, $C$, and respectively, with $(U_1\tilde{W})(A\tilde{W}B\tilde{W})=(A\tilde{W}B\tilde{W})(U_1\tilde{W})$. If $\mathcal{Y}$ is a W-weighted weak MPD inverse of $A\tilde{W}B\tilde{W}C$, the general solution of the equation \[ \mathcal{Y}(A\tilde{W}B\tilde{W}C\tilde{W})^{\tilde{k}+1}=B_1(A\tilde{W}B\tilde{W}C\tilde{W})^{\tilde{k}} \] is given by \[ \mathcal{Y}=B_1X_4\tilde{W}+Z(I-A\tilde{W}B\tilde{W}C\tilde{W}X_4\tilde{W}), \] where $Z\in\mathbb{C}^{n \times {m}}$ is arbitrary.
\end{theorem}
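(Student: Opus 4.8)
The plan is to mirror the proof of the preceding (double-product) theorem, replacing the two-factor reverse order law of Theorem \ref{Inverse 3.27} by its triple analogue in Theorem \ref{Inverse 3.31}. First I would record the two facts that make the argument run. Since $X_4$ is a minimal rank W-weighted weak Drazin inverse of $A\tilde{W}B\tilde{W}C$, Theorem \ref{main1.1}(iv) supplies the projection identity $X_4=(A\tilde{W}B\tilde{W}C\tilde{W})^{\tilde{k}}[(A\tilde{W}B\tilde{W}C\tilde{W})^{\tilde{k}}]^{\dag}X_4$ together with $X_4\tilde{W}(A\tilde{W}B\tilde{W}C\tilde{W})^{\tilde{k}+1}=(A\tilde{W}B\tilde{W}C\tilde{W})^{\tilde{k}}$; and because $A\tilde{W}$, $B\tilde{W}$, $C\tilde{W}$ commute and $(U_1\tilde{W})(A\tilde{W}B\tilde{W})=(A\tilde{W}B\tilde{W})(U_1\tilde{W})$, Theorem \ref{Inverse 3.31} yields the factorization $X_4=U_1\tilde{W}Y_4\tilde{W}Z_3$.

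The central computation is to show that any solution $\mathcal{Y}$ of $\mathcal{Y}(A\tilde{W}B\tilde{W}C\tilde{W})^{\tilde{k}+1}=B_1(A\tilde{W}B\tilde{W}C\tilde{W})^{\tilde{k}}$ satisfies $B_1X_4\tilde{W}=\mathcal{Y}A\tilde{W}B\tilde{W}C\tilde{W}X_4\tilde{W}$. I would begin from $B_1X_4\tilde{W}$, substitute the factorization $X_4=U_1\tilde{W}Y_4\tilde{W}Z_3$, insert $(A\tilde{W}B\tilde{W}C\tilde{W})^{\tilde{k}}[(A\tilde{W}B\tilde{W}C\tilde{W})^{\tilde{k}}]^{\dag}$ in front by the projection identity, replace $B_1(A\tilde{W}B\tilde{W}C\tilde{W})^{\tilde{k}}$ by $\mathcal{Y}(A\tilde{W}B\tilde{W}C\tilde{W})^{\tilde{k}+1}$ using the defining matrix equation, peel off one factor $A\tilde{W}B\tilde{W}C\tilde{W}$, and finally collapse the surviving projector against $X_4=U_1\tilde{W}Y_4\tilde{W}Z_3$ once more through Theorem \ref{main1.1}(iv). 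This is exactly the same short chain carried out for the double product, now with $(A\tilde{W}B\tilde{W}C\tilde{W})$ throughout.

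With the identity $B_1X_4\tilde{W}=\mathcal{Y}A\tilde{W}B\tilde{W}C\tilde{W}X_4\tilde{W}$ in hand, I would close by writing
\[
\mathcal{Y}=\mathcal{Y}+B_1X_4\tilde{W}-\mathcal{Y}A\tilde{W}B\tilde{W}C\tilde{W}X_4\tilde{W}=B_1X_4\tilde{W}+\mathcal{Y}\bigl(I-A\tilde{W}B\tilde{W}C\tilde{W}X_4\tilde{W}\bigr),
\]
so that every solution has the advertised form with $Z=\mathcal{Y}$; conversely, substituting $\mathcal{Y}=B_1X_4\tilde{W}+Z(I-A\tilde{W}B\tilde{W}C\tilde{W}X_4\tilde{W})$ into the equation and invoking $X_4\tilde{W}(A\tilde{W}B\tilde{W}C\tilde{W})^{\tilde{k}+1}=(A\tilde{W}B\tilde{W}C\tilde{W})^{\tilde{k}}$ shows the complementary term is annihilated, confirming these are precisely the solutions.

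I expect the only delicate point to be the bookkeeping in the central chain: one must apply Theorem \ref{main1.1}(iv) to the \emph{factored} expression $U_1\tilde{W}Y_4\tilde{W}Z_3$ rather than to $X_4$ directly, and must invoke the triple commutativity hypotheses exactly where Theorem \ref{Inverse 3.31} needs them, so that $X_4=U_1\tilde{W}Y_4\tilde{W}Z_3$ is legitimate. Everything else is the same formal manipulation as in the two-factor case.
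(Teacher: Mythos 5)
Your proposal is correct and takes essentially the same route as the paper: the paper states this triple-product theorem without proof, leaving it as the verbatim analogue of the preceding two-factor theorem, and your argument reproduces exactly that analogue (factor $X_4$ via Theorem \ref{Inverse 3.31}, insert the projector $(A\tilde{W}B\tilde{W}C\tilde{W})^{\tilde{k}}[(A\tilde{W}B\tilde{W}C\tilde{W})^{\tilde{k}}]^{\dag}$ by Theorem \ref{main1.1}(iv), substitute the defining equation, peel off one factor, and rewrite $\mathcal{Y}=B_1X_4\tilde{W}+\mathcal{Y}(I-A\tilde{W}B\tilde{W}C\tilde{W}X_4\tilde{W})$). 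Your explicit verification of the converse direction is a small addition the paper's two-factor proof omits; one could also note, as a further simplification, that the chain works by applying Theorem \ref{main1.1}(iv) to $X_4$ directly, so the factorization $X_4=U_1\tilde{W}Y_4\tilde{W}Z_3$ is not logically needed, but this matches how the paper itself arranges the two-factor argument.
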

We examine the solutions of a new class of matrix equations by applying the forward and triple forward order laws to $A\tilde{W}B$ and $A\tilde{W}B\tilde{W}C$.
\begin{theorem} \normalfont
    Assume that $A,B\in\mathbb{C}^{m \times {n}}$, $B_1,\tilde{W}\in\mathbb{C}^{n \times {m}}$ with $(A\tilde{W})(B\tilde{W})=(B\tilde{W})(A\tilde{W})$ and $\tilde{k}=\max\{ind(A\tilde{W}),ind(B\tilde{W}),ind(A\tilde{W}B\tilde{W})\}$. Suppose $X_3$, $Y_3$ and $Z_2$ are the minimal rank W-weighted weak Drazin inverses of $A\tilde{W}B$, $B$, and $A$ respectively, with $(Y_3\tilde{W})(A\tilde{W})=(A\tilde{W})(Y_3\tilde{W})$. If $\mathcal{Y}$ is a W-weighted weak MPD inverse of $A\tilde{W}B$, then the general solution of the equation \[ \mathcal{Y}(A\tilde{W}B\tilde{W})^{\tilde{k}+1}=B_1(A\tilde{W}B\tilde{W})^{\tilde{k}} \] is given by \[ \mathcal{Y}=B_1X_3\tilde{W}+Z(I-A\tilde{W}B\tilde{W}X_3\tilde{W}), \] where $Z\in\mathbb{C}^{n \times {m}}$ is any arbitrary matrix.
\end{theorem}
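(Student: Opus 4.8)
The plan is to follow the structure of the preceding theorem almost verbatim, with $X_2$ replaced by $X_3$ and the reverse order law replaced by the forward order law of Theorem~\ref{Inverse 3.28}, which supplies the factorization $X_3=Z_2\tilde{W}Y_3$. Throughout I would use two facts. First, since $X_3$ is a minimal rank $W$-weighted weak Drazin inverse of $A\tilde{W}B$, Theorem~\ref{main1.1} gives both $X_3\tilde{W}(A\tilde{W}B\tilde{W})^{\tilde{k}+1}=(A\tilde{W}B\tilde{W})^{\tilde{k}}$ and the projection identity $X_3=(A\tilde{W}B\tilde{W})^{\tilde{k}}[(A\tilde{W}B\tilde{W})^{\tilde{k}}]^{\dag}X_3$ (part (iv)). Second, the hypothesis on $\mathcal{Y}$ reads $\mathcal{Y}(A\tilde{W}B\tilde{W})^{\tilde{k}+1}=B_1(A\tilde{W}B\tilde{W})^{\tilde{k}}$, which I would use to trade $B_1(A\tilde{W}B\tilde{W})^{\tilde{k}}$ for $\mathcal{Y}(A\tilde{W}B\tilde{W})^{\tilde{k}+1}$.

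The central step is to prove the identity $B_1X_3\tilde{W}=\mathcal{Y}A\tilde{W}B\tilde{W}X_3\tilde{W}$. Starting from $B_1X_3\tilde{W}$, I would insert the projector $(A\tilde{W}B\tilde{W})^{\tilde{k}}[(A\tilde{W}B\tilde{W})^{\tilde{k}}]^{\dag}$ in front of $X_3$ using Theorem~\ref{main1.1}(iv), then replace the resulting $B_1(A\tilde{W}B\tilde{W})^{\tilde{k}}$ by $\mathcal{Y}(A\tilde{W}B\tilde{W})^{\tilde{k}+1}$, peel off a single factor $A\tilde{W}B\tilde{W}$ on the left, and collapse the projector back onto $X_3$ by Theorem~\ref{main1.1}(iv) once more. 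This yields $B_1X_3\tilde{W}=\mathcal{Y}A\tilde{W}B\tilde{W}X_3\tilde{W}$, exactly as in the reverse-order-law case; the factorization $X_3=Z_2\tilde{W}Y_3$ is what ties the statement to the forward order law, although the identity itself only needs the projection property of $X_3$ together with the constraint on $\mathcal{Y}$.

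With the identity in hand, writing $\mathcal{Y}=\mathcal{Y}+B_1X_3\tilde{W}-\mathcal{Y}A\tilde{W}B\tilde{W}X_3\tilde{W}=B_1X_3\tilde{W}+\mathcal{Y}(I-A\tilde{W}B\tilde{W}X_3\tilde{W})$ shows that every solution has the asserted form with $Z=\mathcal{Y}$. To confirm that this is the \emph{general} solution I would verify the converse: for arbitrary $Z\in\mathbb{C}^{n\times m}$, substituting $\mathcal{Y}=B_1X_3\tilde{W}+Z(I-A\tilde{W}B\tilde{W}X_3\tilde{W})$ into the equation makes the first term contribute $B_1X_3\tilde{W}(A\tilde{W}B\tilde{W})^{\tilde{k}+1}=B_1(A\tilde{W}B\tilde{W})^{\tilde{k}}$, while the second contributes $Z\bigl((A\tilde{W}B\tilde{W})^{\tilde{k}+1}-A\tilde{W}B\tilde{W}X_3\tilde{W}(A\tilde{W}B\tilde{W})^{\tilde{k}+1}\bigr)=\mathbf{O}$ because $X_3\tilde{W}(A\tilde{W}B\tilde{W})^{\tilde{k}+1}=(A\tilde{W}B\tilde{W})^{\tilde{k}}$. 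Hence each such $\mathcal{Y}$ solves the equation and the parametrization is exhaustive.

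The main obstacle is bookkeeping rather than conceptual difficulty: correctly threading the projector insertions and the constraint substitution through the long product while keeping every $\tilde{W}$ factor in its place. One subtlety worth flagging is that the commutativity hypothesis as stated, $(Y_3\tilde{W})(A\tilde{W})=(A\tilde{W})(Y_3\tilde{W})$, is the one powering the reverse order law (Theorem~\ref{Inverse 3.27}), whereas invoking the forward factorization $X_3=Z_2\tilde{W}Y_3$ of Theorem~\ref{Inverse 3.28} would instead require $(Z_2\tilde{W})(B\tilde{W})=(B\tilde{W})(Z_2\tilde{W})$. Since the key identity rests only on the projection property of $X_3$ and the constraint on $\mathcal{Y}$, the conclusion is insensitive to which factorization of $X_3$ is available, so the argument remains valid under either commutativity assumption.
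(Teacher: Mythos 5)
Your proposal is correct, and it is essentially the paper's own argument: the paper states this forward-order-law version without proof, and its proof of the immediately preceding reverse-order-law analogue (with $X_2$ in place of $X_3$) runs exactly the chain you describe --- insert the projector $(A\tilde{W}B\tilde{W})^{\tilde{k}}[(A\tilde{W}B\tilde{W})^{\tilde{k}}]^{\dag}$ via Theorem~\ref{main1.1}(iv), trade $B_1(A\tilde{W}B\tilde{W})^{\tilde{k}}$ for $\mathcal{Y}(A\tilde{W}B\tilde{W})^{\tilde{k}+1}$, peel off one factor of $A\tilde{W}B\tilde{W}$, collapse the projector, and then rewrite $\mathcal{Y}=B_1X_3\tilde{W}+\mathcal{Y}(I-A\tilde{W}B\tilde{W}X_3\tilde{W})$.

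Two places where your write-up is tighter than the paper's template are worth noting. First, you prove the converse direction: that \emph{every} matrix of the form $B_1X_3\tilde{W}+Z(I-A\tilde{W}B\tilde{W}X_3\tilde{W})$ solves the equation, using $X_3\tilde{W}(A\tilde{W}B\tilde{W})^{\tilde{k}+1}=(A\tilde{W}B\tilde{W})^{\tilde{k}}$ to kill the $Z$-term. The paper's proof of the analogue only establishes that a solution must have this form, which is strictly speaking only half of a ``general solution'' claim, so your addition closes a real gap in the template. Second, your observation that the key identity $B_1X_3\tilde{W}=\mathcal{Y}A\tilde{W}B\tilde{W}X_3\tilde{W}$ needs only the projection property of $X_3$ and not any factorization of it is accurate, and it correctly defuses the hypothesis mismatch you flag: the statement carries over the reverse-order-law commutativity condition $(Y_3\tilde{W})(A\tilde{W})=(A\tilde{W})(Y_3\tilde{W})$ from Theorem~\ref{Inverse 3.27}, whereas the forward factorization $X_3=Z_2\tilde{W}Y_3$ of Theorem~\ref{Inverse 3.28} would require $(Z_2\tilde{W})(B\tilde{W})=(B\tilde{W})(Z_2\tilde{W})$; since neither factorization is actually used in the core computation, the conclusion holds under either assumption, exactly as you say.
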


\begin{theorem} \normalfont
    Let $A,B,C\in\mathbb{C}^{m \times {n}}$, $B_1,\tilde{W}\in\mathbb{C}^{n \times {m}}$ with $A\tilde{W}$, $B\tilde{W}$ and $C\tilde{W}$ are commuting with each other and $\tilde{k}=\max\{ ind(A\tilde{W}),ind(B\tilde{W}), ind(C\tilde{W}),ind(A\tilde{W}B\tilde{W}C\tilde{W})\}$. Assume that $X_5$, $Y_4$, $Z_3$ and $U_1$ are the minimal rank W-weighted weak Drazin inverse of $A\tilde{W}B\tilde{W}C$, $B$, $A$, $C$, and respectively, with $(Z_3\tilde{W}Y_4\tilde{W})(C\tilde{W})=(C\tilde{W})(Z_3\tilde{W}Y_4\tilde{W})$. If $\mathcal{Y}$ is a W-weighted weak MPD inverse of $A\tilde{W}B\tilde{W}C$, the general solution of the equation \[ \mathcal{Y}(A\tilde{W}B\tilde{W}C\tilde{W})^{\tilde{k}+1}=B_1(A\tilde{W}B\tilde{W}C\tilde{W})^{\tilde{k}} \] is given by \[ \mathcal{Y}=B_1X_5\tilde{W}+Z(I-A\tilde{W}B\tilde{W}C\tilde{W}X_5\tilde{W}), \] where $Z\in\mathbb{C}^{n \times {m}}$ is arbitrary.
\end{theorem}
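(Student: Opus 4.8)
The plan is to follow the template of the analogous $A\tilde{W}B$ result verbatim, replacing the product $A\tilde{W}B$ by the triple product $A\tilde{W}B\tilde{W}C$ and the reverse-order factorization by the triple forward order law of Theorem~\ref{Inverse 3.32}. First I would record the two defining properties of $X_5$ as a minimal rank W-weighted weak Drazin inverse of $A\tilde{W}B\tilde{W}C$, namely
\[
X_5\tilde{W}(A\tilde{W}B\tilde{W}C\tilde{W})^{\tilde{k}+1}=(A\tilde{W}B\tilde{W}C\tilde{W})^{\tilde{k}}\quad\text{and}\quad X_5=(A\tilde{W}B\tilde{W}C\tilde{W})^{\tilde{k}}[(A\tilde{W}B\tilde{W}C\tilde{W})^{\tilde{k}}]^{\dag}X_5,
\]
the second being Theorem~\ref{main1.1}(iv). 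Since the commuting hypotheses together with $(Z_3\tilde{W}Y_4\tilde{W})(C\tilde{W})=(C\tilde{W})(Z_3\tilde{W}Y_4\tilde{W})$ are exactly those of Theorem~\ref{Inverse 3.32}, I may substitute $X_5=Z_3\tilde{W}Y_4\tilde{W}U_1$ wherever convenient.

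The core of the argument is the single identity $B_1X_5\tilde{W}=\mathcal{Y}A\tilde{W}B\tilde{W}C\tilde{W}X_5\tilde{W}$, which I would derive by the same chain as in the $A\tilde{W}B$ case. Starting from $B_1X_5\tilde{W}=B_1Z_3\tilde{W}Y_4\tilde{W}U_1\tilde{W}$, I insert the factor $(A\tilde{W}B\tilde{W}C\tilde{W})^{\tilde{k}}[(A\tilde{W}B\tilde{W}C\tilde{W})^{\tilde{k}}]^{\dag}$ via Theorem~\ref{main1.1}(iv); replace $B_1(A\tilde{W}B\tilde{W}C\tilde{W})^{\tilde{k}}$ by $\mathcal{Y}(A\tilde{W}B\tilde{W}C\tilde{W})^{\tilde{k}+1}$ because $\mathcal{Y}$ solves the equation; split off one factor $A\tilde{W}B\tilde{W}C\tilde{W}$; and then collapse the trailing $(A\tilde{W}B\tilde{W}C\tilde{W})^{\tilde{k}}[(A\tilde{W}B\tilde{W}C\tilde{W})^{\tilde{k}}]^{\dag}Z_3\tilde{W}Y_4\tilde{W}U_1$ back to $X_5$ using Theorem~\ref{main1.1}(iv) and the triple forward order law.

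With this identity the representation follows by the rearrangement
\[
\mathcal{Y}=B_1X_5\tilde{W}+\bigl(\mathcal{Y}-\mathcal{Y}A\tilde{W}B\tilde{W}C\tilde{W}X_5\tilde{W}\bigr)=B_1X_5\tilde{W}+\mathcal{Y}\bigl(I-A\tilde{W}B\tilde{W}C\tilde{W}X_5\tilde{W}\bigr),
\]
so that taking $Z=\mathcal{Y}$ exhibits the claimed form. To confirm it is the general solution I would separately check the two routine facts: $B_1X_5\tilde{W}$ is a particular solution, since $B_1X_5\tilde{W}(A\tilde{W}B\tilde{W}C\tilde{W})^{\tilde{k}+1}=B_1(A\tilde{W}B\tilde{W}C\tilde{W})^{\tilde{k}}$; and every $Z(I-A\tilde{W}B\tilde{W}C\tilde{W}X_5\tilde{W})$ solves the homogeneous equation, because $A\tilde{W}B\tilde{W}C\tilde{W}X_5\tilde{W}(A\tilde{W}B\tilde{W}C\tilde{W})^{\tilde{k}+1}=(A\tilde{W}B\tilde{W}C\tilde{W})^{\tilde{k}+1}$ forces $(I-A\tilde{W}B\tilde{W}C\tilde{W}X_5\tilde{W})(A\tilde{W}B\tilde{W}C\tilde{W})^{\tilde{k}+1}=\mathbf{O}$.

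I do not anticipate a real obstacle, since the content is a faithful transfer of the earlier $A\tilde{W}B$ proof; the only point deserving care is that $X_5$ is a minimal rank weak Drazin inverse relative to the \emph{product} $A\tilde{W}B\tilde{W}C$, while $\tilde{k}$ is the maximum of four indices. Because $\tilde{k}\ge ind(A\tilde{W}B\tilde{W}C\tilde{W})$, the range of $(A\tilde{W}B\tilde{W}C\tilde{W})^{\tilde{k}}$ is already stabilized, so Theorem~\ref{main1.1}(iv) applies with this $\tilde{k}$ exactly as stated; and one must make sure each replacement of $Z_3\tilde{W}Y_4\tilde{W}U_1$ by $X_5$, and each idempotent-type collapse, is applied to the full triple product rather than to $A$, $B$, or $C$ individually. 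The commuting hypotheses enter only through the validity of the triple forward order law and need not be reinvoked in the present computation.
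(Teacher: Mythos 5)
Your proposal is correct and takes essentially the same approach as the paper: the paper proves only the $A\tilde{W}B$ reverse-order analogue explicitly and leaves this triple forward-order version unproved, but its template is exactly your chain --- substitute the order-law factorization of Theorem~\ref{Inverse 3.32}, insert the idempotent factor $(A\tilde{W}B\tilde{W}C\tilde{W})^{\tilde{k}}[(A\tilde{W}B\tilde{W}C\tilde{W})^{\tilde{k}}]^{\dag}$ via Theorem~\ref{main1.1}(iv), trade $B_1(A\tilde{W}B\tilde{W}C\tilde{W})^{\tilde{k}}$ for $\mathcal{Y}(A\tilde{W}B\tilde{W}C\tilde{W})^{\tilde{k}+1}$ using the equation, collapse back to $X_5$, and rearrange to get $\mathcal{Y}=B_1X_5\tilde{W}+\mathcal{Y}(I-A\tilde{W}B\tilde{W}C\tilde{W}X_5\tilde{W})$. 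Your extra check that every matrix of the stated form is in fact a solution (particular plus homogeneous part) is a small completeness improvement over the paper's template, which only establishes that each solution has that form.
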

\section{Conclusion}\label{conc1.1}
We have extended weak MPD and DMP inverses to rectangular matrices using minimal rank W-weighted Drazin inverse. Further, We have derived the explicit expression for W-weighted weak MPD and DMP inverses and shown their characterizations and representations along with the uniqueness for the solutions of some matrix equations.  In addition, we have established certain relationships between the perturbation matrix and the minimal rank W-weighted Drazin inverse, and derive perturbation formulas that include lower and upper bounds for specific matrices. Moreover, several sufficient conditions are explored for finding the reverse and forward order laws for W-weighted Drazin inverse and minimal rank W-weighted weak Drazin inverse of the matrices $A\tilde{W}B$ and $A\tilde{W}B\tilde{W}C$. As an application, we have proved some matrix equations using reverse and forward order laws.

The following work opens up several direction for future research:
\begin{itemize}
    \item Explore the reverse and forward order laws for the minimal rank W-weighted right weak Drazin inverse and W-weighted weak DMP inverse
    \item Investigate the W-weighted MP-m-WCI is represented by $B^{\dag,\textcircled{\#}_m,W}$ and is also defined as $B^{\dag,\textcircled{\#}_m,W}=B^{\dag}B\tilde{W}B^{\textcircled{\#}_m,W}\tilde{W}$, analogous to the dual of this matrix in terms of minimal rank W-weighted weak Drazin inverse.
\end{itemize}

\section{Declaration of competing interest} The authors assert that they have no conflict of interest related to this work.

\section{Data availability} There have been no data used in this research paper.


\begin{thebibliography}{99}
\bibitem{Ben2}
Ben-Israel, A., Greville, T. N., Generalized Inverses: Theory and Applications, New York, NY: Springer New York, (2003).
\bibitem{Campbell}
Campbell, S. L., Meyer, C. D., Generalized Inverses of Linear Transformations, SIAM, (2009).
\bibitem{Campbell20}
Campbell, S. L., Meyer Jr., C. D., Weak Drazin inverses, Linear Algebra Appl., 20(2), 167--178 (1978). \href{https://doi.org/10.1016/0024-3795(78)90048-4}{\textbf{doi:10.1016/0024-3795(78)90048-4}}
\bibitem{Castro}
Castro-Gonzalez, N., Hartwig, R., Perturbation results and the forward order law for the Moore-Penrose inverse of a product, Electron. J. Linear Algebra, 34, 514--525 (2018). \href{https://doi.org/10.13001/1081-3810.3365}{\textbf{doi:10.13001/1081-3810.3365}}
\bibitem{Cline3}
Cline, R. E., Greville, T. N. E., A Drazin inverse for rectangular matrices, Linear Algebra Appl., 29, 53--62 (1980). \href{https://doi.org/10.1016/0024-3795(80)90230-X}{\textbf{doi:10.1016/0024-3795(80)90230-X}}
\bibitem{Deng18}
Deng, C. Y., Du, H. K., Representations of the Moore-Penrose inverse of 2×2 block operator valued matrices, J.Korean Math. Soc., 46(6), 1139--1150 (2009).
\bibitem{Drazin13}
Drazin, M. P., Pseudo-inverses in associative rings and semigroups, Amer. Math. Monthly, 65(7), 506--514 (1958). \href{https://doi.org/10.1080/00029890.1958.11991949}{\textbf{doi:10.1080/00029890.1958.11991949}}
\bibitem{2021}
Ferreyra, D. E., Levis, F. E., Priori, A. N., Thome, N., The weak core inverse. Aequationes Math., 95(2), 351--373  (2021).
\bibitem{Ferreyra6}
Ferreyra, D. E., Levis, F. E., Thome, N., Revisiting the core EP inverse and its extension to rectangular matrices, Quaestiones Math., 41(2), 265--281 (2018). \href{https://doi.org/10.2989/16073606.2017.1377779}{\textbf{doi:10.2989/16073606.2017.1377779}}
\bibitem{Ferreyra9}
Ferreyra, D. E., Mosic, D., The $ W $-weighted $ m $-weak core inverse, arXiv preprint arXiv, 2403.14196 (2024).
\bibitem{Gao14}
Gao J, Zuo K, Wang QW, The W-weighted m-weak group MP inverse and its applications, arXiv preprint arXiv, 2411.01481 (2024).
\bibitem{Greville21}
Greville, T. N. E., Note on the generalized inverse of a matrix product, SIAM Rev., 8(4), 518--521 (1966). \href{https://doi.org/10.1137/100810}{\textbf{doi:10.1137/100810}}
\bibitem{Kyrchei12}
Kyrchei, I. I., Determinantal Representations of the Weighted Core‐EP, DMP, MPD, and CMP Inverses. J. Math., 9816038 (2020). \href{ https://doi.org/10.1155/2020/9816038}{\textbf{doi:10.1155/2020/9816038}}
\bibitem{Core}
Manjunatha Prasad, K., Mohana, K., Core–EP inverse, Linear Multilinear Algebra, 62(6), 792--802 (2014).
\bibitem{(2020)}
Ma, H., Gao, X., Stanimirović, P. S., Characterizations, iterative method, sign pattern and perturbation analysis for the DMP inverse with its applications, Appl. Math. Comput., 378, 125196 (2020). \href{https://doi.org/10.1016/j.amc.2020.125196}{\textbf{doi:10.1016/j.amc.2020.125196}}
\bibitem{Malik22}
Malik, S. B., Thome, N., On a new generalized inverse for matrices of an arbitrary index, Appl. Math. Comput., 226, 575--580 (2014). \href{https://doi.org/10.1016/j.amc.2013.10.060}{\textbf{doi:10.1016/j.amc.2013.10.060}}
\bibitem{Meng11}
Meng, L., The DMP inverse for rectangular matrices, Filomat, 31(19), 6015--6019 (2017). \href{DOI:10.2298/FIL1719015M}{\textbf{doi:10.2298/FIL1719015M}}
\bibitem{Philos}
Moore, E. H., General analysis, Pt. I, Mem. Amer. Philos. Soc. 1 (1935).
\bibitem{mos1}
Mosić, D., Minimal rank weighted weak Drazin inverses, Electron. J. Linear Algebra, 40, 714--728 (2024). \href{https://doi.org/10.13001/ela.2024.8825}{\textbf{doi:10.13001/ela.2024.8825}}
\bibitem{mos8}
Mosić, D., Stanimirović, P. S., Kazakovtsev, L. A., The $ m $-weak group inverse for rectangular matrices, Electron. Res. Arch., 31(3), (2024). \href{doi: 10.3934/era.2024083}{\textbf{doi:10.3934/era.2024083}}
\bibitem{mos16}
Mosić, D., Weak MPD and DMP inverses, J. Math. Anal. Appl., 540(2), 128653 (2024). \href{https://doi.org/10.1016/j.jmaa.2024.128653}{\textbf{doi:10.1016/j.jmaa.2024.128653}}
\bibitem{mos7}
Mosić, D., Weighted core–EP inverse of an operator between Hilbert spaces, Linear Multilinear Algebra, 67(2), 278--298 (2019). \href{https://doi.org/10.1080/03081087.2017.1418824}{\textbf{doi:10.1080/03081087.2017.1418824}}
\bibitem{New}
Mosić, D., Zhang, D., New representations and properties of the m-weak group inverse, Results Math., 78(3), 97 (2023). \href{DOI:10.1007/s00025-023-01878-7}{\textbf{10.1007/s00025-023-01878-7}}
\bibitem{Penrose17}
Penrose, R., A generalized inverse for matrices, Math. Proc. Cambridge Philos. Soc., 51(3), 406--413 (1955). \href{https://doi.org/10.1017/S0305004100030401}{\textbf{doi:10.1017/S0305004100030401}}
\bibitem{1956}
Rado, R., Note on generalized inverses of matrices, Math. Proc. Cambridge Philos. Soc., 52(3), 600--601 (1956).
\bibitem{mos5}
Stojanović, K. S., Mosić, D., Weighted MPCEP inverse of an operator between Hilbert spaces, Bull. Iran. Math. Soc., 48(1), 53--71 (2022).
\bibitem{Stank15}
Stanković, M. S., Djurić, M. V., Weighted weak Drazin inverses, Zb. rad. Filoz. fak. Nišu., Serija Matematik, 89--94 (1987).
\bibitem{2018}
Wang, H., Chen, J., Weak group inverse. Open Math., 16(1), 1218--1232 (2018). \href{DOI:10.1515/math-2018-0100}{\textbf{doi:10.1515/math-2018-0100}}
\bibitem{Wang H1}
Wang, H., Zhong, C. C., Triple reverse order law for the Drazin inverse, Appl. Math. J. Chin. Univ., 39(1), 55--68 (2024).
\bibitem{2016}
Wang, X., Yu, A., Li, T., Deng, C., Reverse order laws for the Drazin inverses, J. Math. Anal. Appl., 444(1), 672--689 (2016). \href{https://doi.org/10.1016/j.jmaa.2016.06.026}{\textbf{doi:10.1016/j.jmaa.2016.06.026}}
\bibitem{Wei Y}
Wei, Y., Woo, C. W., Lei, T., A note on the perturbation of the W-weighted Drazin inverse, Appl. Math. Comput., 149(2), 423--430 (2004). \href{https://doi.org/10.1016/S0096-3003(03)00150-4}{\textbf{doi:0.1016/S0096-3003(03)00150-4}}
\bibitem{Wu}
Wu, C., Chen, J., Minimal rank weak Drazin inverses: a class of outer inverses with prescribed range, Electron. J. Linear Algebra, 39, 1--16 (2023).
\bibitem{Ring}
Zhou, Y., Chen, J., Zhou, M., m-weak group inverses in a ring with involution, Rev. R. Acad. Cienc. Exactas Fís. Nat. Ser. A Mat., 115(1), 2 (2021). \href{DOI:10.1007/s13398-020-00932-1}{\textbf{doi:10.1007/s13398-020-00932-1}}














\end{thebibliography}
\end{document}